\newtheorem{theorem}{Theorem}[section]
\newtheorem{proposition}[theorem]{Proposition}
\newtheorem{corollary}[theorem]{Corollary}
\newtheorem{fact}[theorem]{Fact}
\newtheorem{example}[theorem]{Example}
\newtheorem{definition}[theorem]{Definition}
\newtheorem{remark}[theorem]{Remark}
\newcommand{\R}{\mathbb{R}}
\newcommand{\ci}{\mathcal{C}^{\infty}}
\date{}
\newcommand\footnoteref[1]{\protected@xdef\@thefnmark{\ref{#1}}\@footnotemark}
\title{\sc On the order theory for $\ci-$reduced $\mathcal{C}^{\infty}-$Rings and applications}
\author{}
\author{Jean Cerqueira Berni, IME-USP, \small{jeancb@ime.usp.br}, \\
Rodrigo Figueiredo, IME-USP  \small{rodrigof@ime.usp.br},\\
Hugo Luiz Mariano, IME-USP, {\small hugomar@ime.usp.br}}
\date{}
\begin{document}
\maketitle

\begin{center}

\begin{abstract}
\pagestyle{empty}
In the present work
 we carry on the study of the order theory for ($\ci$-reduced) $\ci$-rings initiated in \cite{rings1} (see also \cite{BM2}). In particular, we apply some results of the order theory of $\ci$-fields ({\it e.g.} every such field is real closed) to present another approach to the order theory of general $\ci$-rings: ``smooth real spectra''  (see \cite{separation}). This suggests that a model-theoretic investigation of the class of $\ci$-fields could be interesting and also useful to provide the first steps towards  the development of the ``Real Algebraic Geometry'' of $\ci$-rings.  
\end{abstract}
\end{center}

{\bf Keywords:} {order theory, $\mathcal{C}^\infty$-rings, real spectrum}

\section*{Introduction}


Given a smooth manifold, $M$, the set $\ci(M,\R)$  supports a far richer structure than just of an $\R$-algebra: it interprets not only the symbols for real polynomial functions  but for all smooth real functions $\R^n \to \R$, $n \in \mathbb{N}$. Thus,  $\ci(M,\R)$ is a natural instance of the algebraic structure called \emph{$\ci$-ring}.

It was not until the decades of 1970's and 1980's that a  study of the  abstract (algebraic) theory  of $\ci$-rings was made,  mainly  in order  to construct  topos models for ``Synthetic Differential Geometry'' (\cite{Kock}). The interest in $\ci$-rings gained strength in recent years, mainly motivated by the  differential version of `Derived Algebraic Geometry'' (see \cite{Joyce}). 

In this paper we address the study of the order theory of $\ci-$reduced $\ci-$rings, presenting a useful characterization of the ``natural order'' of a $\ci-$ring, introduced by Moerdijk and Reyes in \cite{rings1}: given any $\ci$-ring $\mathfrak{A} = (A, \Phi)$, this canonical strict partial order $\prec$ is given by:
$$(a \prec b) \iff (\exists u \in A^{\times})(b-a = u^2).$$

Since this natural binary relation given on a generic $\ci$-ring involves invertible elements, we should first analyze these elements of a $\ci$-ring. In order to do so, we shall restrict ourselves to the case of the {\em  $\ci$-reduced} $\ci$-rings. This is carried out in two steps: first proving the results for finitely generated $\ci$-rings and then proving them for arbitrary ones. 

Since any $\ci$-ring can be expressed as the quotient of a free object -- $\ci(\R^E)$, for some set $E$ -- by some (ring-theoretic) ideal, it is appropriate to characterize the equality between their elements by making use of these ring-theoretic ideals. We show that in this context the canonical strict partial order of a generic $\ci$-ring, say $\ci(\R^E)/I$, can be characterized by properties concerning filters of zerosets of functions in $I$.

In \cite{rings1}, Moerdijk and Reyes prove that every $\ci$-field is real closed (cf. \textbf{Theorem 2.10}). 
This suggests that the class of $\ci$-fields is ``well behaved'' with respect to its model theory. 

We apply, in particular,  some results on the order theory of $\ci$-fields -- {\em e.g.}, every such field is real closed (cf. \textbf{Theorem 2.10} in \cite{rings1}). -- to present another approach to the order theory of general $\ci$-rings, introducing the so-called ``smooth real spectra''  (see \cite{separation}). This suggests that a model-theoretic study of the class of $\ci$-fields could be interesting and also useful to provide the first steps towards  the development of the ``Real Algebraic Geometry'' of $\ci$-rings in the vein of \cite{Robson}. \\

{\bf Overview of the paper:} 
In the first section we present some preliminary notions and results that are used (implicitly or explicitly) throughout the paper, such as the basic concept of $\ci-$ring and some features of the category of $\ci-$rings, $\ci-$fields, $\ci-$rings of fractions, $\ci-$radical ideals, $\ci-$reduced $\ci$-rings and some facts about the smooth Zariski spectrum. 
{\bf Section 2} is devoted to present some results that (dually) connects subsets of $\mathbb{R}^E$ to quotients of $\ci(\mathbb{R}^E)$: we present the characterizations of equalities and inequalities between elements of $\ci-$reduced $\ci-$rings, {\em i.e.}, $\ci-$rings of the form $A=\frac{\ci(\R^E)}{I}$ with $\sqrt[\infty]{I}=I$, by means of the filter of zerosets of functions of $I$, and we use a Galois connection between filters of zerosets of $\R^E$ and ideals of $\ci(\R^E)$ to show that there are bijections between the set of maximal filters on $\R^E$ and the set of maximal ideals of $\ci(\R^E)$.  In \textbf{Section 3} we develop a detailed study of the natural strict partial ordering $\prec$ (introduced first by Moerdijk and Reyes) defined on a non-trivial $\ci$-reduced $\ci$-ring, with the aid of the results established in the previous sections. 
 {\bf Section 4}  presents some interesting results on $\ci-$fields based on the results from section 3: for instance, every $\ci$-field has $\prec$ as its unique (strict) total ordering compatible with the operations $+$ and $\cdot$, thus being a Euclidean field (in fact, it is real closed); this is useful to analyse the concept  of ``real $\ci-$spectrum of a $\ci-$ring'', which seems to be the suitable notion to deal with a smooth version of Real Algebraic Geometry.
 Finally, \textbf{Section 5} brings some concluding remarks, pointing some possible applications of the order structure of a $\ci-$reduced $\ci-$ring to its model theory.






\section{Preliminaries}



In this section we present the ingredients of the theory of $\ci$-rings needed in the sequel of this work for the reader's convenience: we present the class of $\ci$-rings as the class of models of an algebraic theory, and we describe the main notions of ``Smooth Commutative Algebra of $\ci$-rings": smooth rings of fractions, $\ci$-radicals, $\ci$-saturation and the smooth Zariski spectra. The main references used here are \cite{rings1}, \cite{rings2}, \cite{BM1}, \cite{BM2}. 

\subsection{On the algebraic theory of $\ci$-Rings}

\hspace{0.5cm}In order to formulate and study the concept of $\mathcal{C}^{\infty}-$ring, we use a first order language $\mathcal{L}$ with a denumerable set of variables (${\rm \bf Var}(\mathcal{L}) = \{ x_1, x_2, \cdots, x_n, \cdots\}$), whose nonlogical symbols are the symbols of all $\mathcal{C}^{\infty}-$functions from $\mathbb{R}^m$ to $\mathbb{R}^n$, with $m,n \in \mathbb{N}$, \textit{i.e.}, the non-logical symbols consist only of function symbols, described as follows.

For each $n \in \mathbb{N}$, we have the $n-$ary \textbf{function symbols} of the set $\mathcal{C}^{\infty}(\mathbb{R}^n, \mathbb{R})$, \textit{i.e.}, $\mathcal{F}_{(n)} = \{ f^{(n)} | f \in \mathcal{C}^{\infty}(\mathbb{R}^n, \mathbb{R})\}$. Thus, the set of function symbols of our language is given by:
      $$\mathcal{F} = \bigcup_{n \in \mathbb{N}} \mathcal{F}_{(n)} = \bigcup_{n \in \mathbb{N}} \mathcal{C}^{\infty}(\mathbb{R}^n)$$
      Note that our set of constants is $\mathbb{R}$, since it can be identified with the set of all $0-$ary function symbols, \textit{i.e.}, ${\rm \bf Const}(\mathcal{L}) = \mathcal{F}_{(0)} = \mathcal{C}^{\infty}(\mathbb{R}^0) \cong \mathcal{C}^{\infty}(\{ *\}) \cong \mathbb{R}$.\\

The terms of this language are defined in the usual way as the smallest set which comprises the individual variables, constant symbols and $n-$ary function symbols followed by $n$ terms ($n \in \mathbb{N}$).

Although a $\ci$-ring may be defined in any finitely complete category, we shall restrict ourselves to the study of  $\mathcal{C}^{\infty}-$rings in ${\rm \bf Set}$.

Before we proceed, we give the following:

\begin{definition}\label{cabala} A \textbf{$\mathcal{C}^{\infty}-$structure} on a set $A$ is a pair $ \mathfrak{A} =(A,\Phi)$, where:

$$\begin{array}{cccc}
\Phi: & \bigcup_{n \in \mathbb{N}} \mathcal{C}^{\infty}(\mathbb{R}^n, \mathbb{R})& \rightarrow & \bigcup_{n \in \mathbb{N}} {\rm Func}\,(A^n; A)\\
      & (f: \mathbb{R}^n \stackrel{\mathcal{C}^{\infty}}{\to} \mathbb{R}) & \mapsto & \Phi(f) := (f^{A}: A^n \to A)
\end{array},$$

\noindent that is, $\Phi$ interprets the \textbf{symbols} of all smooth real functions of $n$ variables as $n-$ary functions on $A$. Given two $\ci$-structures, $ \mathfrak{A} = (A, \Phi)$ and $\mathfrak{B} = (B, \Psi)$, a \textbf{$\ci-$structure homomorphism} is a function $\varphi: A \to B$ such that for any $n \in \mathbb{N}$ and any $f: \mathbb{R}^n \stackrel{\mathcal{C}^{\infty}}{\to} \mathbb{R}$ the following diagram commutes:
$$\xymatrixcolsep{5pc}\xymatrix{
A^n \ar[d]_{\Phi(f)}\ar[r]^{\varphi^{(n)}} & B^n \ar[d]^{\Psi(f)}\\
A \ar[r]^{\varphi^{}} & B
}$$
 \textit{i.e.}, $\Psi(f) \circ \varphi^{(n)} = \varphi^{} \circ \Phi(f)$. The class of $\ci-$structures and their morphisms compose a category that we denote by $\ci{\rm \bf Str}$.
\end{definition}

We call a $\mathcal{C}^{\infty}-$structure $\mathfrak{A} = (A, \Phi)$ a \textbf{$\mathcal{C}^{\infty}-$ring} if it preserves  projections and all equations between smooth functions. Formally, we have the following:

\begin{definition}\label{CravoeCanela}Let $\mathfrak{A}=(A,\Phi)$ be a $\mathcal{C}^{\infty}-$structure. We say that $\mathfrak{A}$ (or, when there is no danger of confusion, $A$) is a \textbf{$\mathcal{C}^{\infty}-$ring} if the following is true:\\

$\bullet$ Given any $n,k \in \mathbb{N}$ and any projection $p_k: \mathbb{R}^n \to \mathbb{R}$, we have:

$$\mathfrak{A} \models (\forall x_1)\cdots (\forall x_n)(p_k(x_1, \cdots, x_n)=x_k)$$

$\bullet$ For every $f, g_1, \cdots g_n \in \mathcal{C}^{\infty}(\mathbb{R}^m, \mathbb{R})$ with $m,n \in \mathbb{N}$, and every $h \in \mathcal{C}^{\infty}(\mathbb{R}^n, \mathbb{R})$ such that $f = h \circ (g_1, \cdots, g_n)$, one has:
$$\mathfrak{A} \models (\forall x_1)\cdots (\forall x_m)(f(x_1, \cdots, x_m)=h(g(x_1, \cdots, x_m), \cdots, g_n(x_1, \cdots, x_m)))$$

Given two $\ci-$rings, $\mathfrak{A}=(A, \Phi)$ and $\mathfrak{B}=(B, \Psi)$, a \textbf{$\ci$-homomorphism} is just a $\ci$-structure homomorphism between these $\ci-$rings. The category of all $\ci-$rings and $\ci-$ring homomorphisms make up a full subcategory of $\ci{\rm \bf Str}$, that we denote by $\ci{\rm \bf Rng}$.
\end{definition}


\begin{remark}[cf. \textbf{Sections 2, 3} and {\bf 4} of \cite{BM1}]
Since $\mathcal{C}^{\infty}{\rm \bf Rng}$ is a ``variety of algebras'' (it is a class of $\mathcal{C}^{\infty}-$structures which satisfy a given set of equations), it is closed under substructures, homomorphic images and products, by \textbf{Birkhoff's HSP Theorem}. Moreover:

$\bullet$ $\mathcal{C}^{\infty}{\rm \bf Rng}$ is a concrete category and the forgetful functor, $ U :  \mathcal{C}^{\infty}{\rm \bf Rng}  \to {\rm \bf Set}$ creates directed inductive colimits. Since $\mathcal{C}^{\infty}{\rm \bf Rng}$ is a variety of algebras, it has all (small) limits and (small) colimits. In particular, it has binary coproducts, that is, given any two $\mathcal{C}^{\infty}-$rings $A$ and $B$, we have their coproduct $A \stackrel{\iota_A}{\rightarrow} A\otimes_{\infty} B \stackrel{\iota_B}{\leftarrow} B$ again in $\ci{\rm \bf Rng}$;

$\bullet$ Each set $X$ freely generates a $C^\infty$-ring, $L(X)$, as follows:

-  for any finite set $X'$ with $\sharp X' = n$ we have $ L(X')= \mathcal{C}^{\infty}(\mathbb{R}^{X'}) \cong \mathcal{C}^\infty(\mathbb{R}^n, \mathbb{R})$, which is the free $C^\infty$-ring on $n$ generators, $n \in \mathbb{N}$;\\
-  for a general set, $X$, we take $L(X) = \mathcal{C}^{\infty}(\mathbb{R}^X):= \varinjlim_{X' \subseteq_{\rm fin} X} \mathcal{C}^{\infty}(\mathbb{R}^{X'})$;

$\bullet$ Given any $\mathcal{C}^{\infty}-$ring $A$ and a set, $X$, we can freely adjoin the set $X$ of variables to $A$ with the following construction: $A\{ X\}:= A \otimes_{\infty} L(X)$. The elements of $A\{ X\}$ are usually called $\mathcal{C}^{\infty}-$polynomials;

$\bullet$ The congruences of $\mathcal{C}^{\infty}-$rings are classified by their ``ring-theoretical'' ideals;

$\bullet$ Every $\mathcal{C}^{\infty}-$ring is the homomorphic image of some free $\mathcal{C}^{\infty}-$ring determined by some set, being isomorphic to the quotient of a free $\mathcal{C}^{\infty}-$ring by some ideal.

\end{remark}




Within the category of $\mathcal{C}^{\infty}-$rings we can perform a construction that is similar to the ``ring of fractions'' in Commutative Algebra, as well as define a suitable notion of ``radical ideal''. We  analyze these concepts in the following section.

\subsection{On $\ci$-Rings of Fractions and $\ci$-Radical Ideals}

In order to extend the notion of the ring of fractions to the category $\mathcal{C}^{\infty}{\rm \bf Rng}$, we make use of the universal property a ring of fractions must satisfy in ${\rm \bf Ring}$- except that we must deal with $\ci-$rings and $\ci-$homo\-morphisms instead of rings and homomorphisms of rings.

\begin{definition}\label{Alem}Let $\mathfrak{A} = (A,\Phi)$ be a $\mathcal{C}^{\infty}-$ring and $S \subseteq A$ one of its subsets. The $\mathcal{C}^{\infty}-$\textbf{ring of fractions} of $A$ with respect to $S$ is a $\mathcal{C}^{\infty}-$ring $A\{ S^{-1}\}$, together with a $\mathcal{C}^{\infty}-$homo\-morphism $\eta_S: A \to A\{ S^{-1}\}$ satisfying the following properties:
\begin{itemize}
  \item[(1)]{$(\forall s \in S)(\eta_S(s) \in (A\{ S^{-1}\})^{\times})$}
  \item[(2)]{If $\varphi: A \to B$ is any $\mathcal{C}^{\infty}-$homomorphism such that for every $s \in S$ we have $\varphi(s) \in B^{\times}$, then there is a unique $\mathcal{C}^{\infty}-$homomorphism $\widetilde{\varphi}: A\{ S^{-1}\} \to B$ such that the following triangle commutes:
      $$\xymatrixcolsep{5pc}\xymatrix{
      A \ar[r]^{\eta_S} \ar[rd]^{\varphi} & A\{ S^{-1}\} \ar[d]^{\widetilde{\varphi}}\\
        & B}$$}
\end{itemize}

By this universal property, the $\mathcal{C}^{\infty}-$ring of fractions is unique, up to (unique) isomorphisms.
\end{definition}

The existence of smooth rings of fractions can be guaranteed by a combination of constructions:

$\bullet$ first consider the addition of $\sharp  S$-variables to the $\ci$-ring $A$:

$$A\{ x_s | s \in S \} := A \otimes_{\infty} \mathcal{C}^{\infty}(\mathbb{R}^S),$$

and let $j_S : A \to A\{ x_s | s \in S \}$ be the (left) canonical morphism;

$\bullet$  now consider the ideal
$\langle \{ x_s \cdot \iota_A(s) - 1 | s \in S\}\rangle$  of $A$ generated by $\{ x_s \cdot \iota_A(s) - 1 | s \in S\}$, and take the quotient:

$$ A\{ x_s | s \in S \} \overset{q_S}\twoheadrightarrow \dfrac{A\{x_s | s \in S \}}{\langle \{ x_s \cdot \iota_A(s) - 1 | s \in S \}\rangle}.$$

Finally, define:

$$A\{S^{-1}\} := \frac{A\{x_s | s \in S \}}{\langle \{ x_s \cdot \iota_A(s) - 1 | s \in S \}\rangle}; $$ 
and 

$$\eta_S := q_s \circ j_s : A \to A\{S^{-1}\}.$$

It is not difficult to see that such a construction satisfies the required universal property.

\begin{example} \label{frac-exa}

Let $\varphi \in \ci(\R^n)$ and consider the (closed) subset  $Z(\varphi) = \{ \vec{x} \in \R^n: \varphi(\vec{x}) = 0 \} \subseteq \R^n$. Then $\ci(\R^n)\{\varphi^{-1}\} \cong \ci(\R^{n+1})/\langle \{ y \cdot \varphi -1 \}\rangle \cong \ci(\R^n \setminus Z(\varphi))$ and the restriction  map $\ci(\R^n) \to \ci(\R^n \setminus Z(\varphi))$ is a $\ci$-homomorphism that satisfies the universal property of $\eta_{\{\varphi\}}$.

\end{example}

Now we analyze the concept of the ``$\mathcal{C}^{\infty}-$radical ideal'' in the theory of $\mathcal{C}^{\infty}-$rings, which plays a similar role to the one played by radical ideals in Commutative Algebra. This concept was first presented by I. Moerdijk and G. Reyes in \cite{rings1} in 1986, and explored in more details in \cite{rings2}. \\

Unlike many notions in the branch of Smooth Rings such as $\mathcal{C}^{\infty}-$fields ($\ci$-rings whose underlying rings are fields), $\mathcal{C}^{\infty}-$do\-mains ($\ci$-rings whose underlying rings are domains) and local $\mathcal{C}^{\infty}-$rings ($\ci$-rings whose underlying rings are local rings), the concept of a $\mathcal{C}^{\infty}-$radical of an ideal cannot be brought from Commutative Algebra via the forgetful functor. This happens because when we take the localization of a $\ci$-ring by an arbitrary prime ideal, it is not always true that we get a local $\ci-$ring (see \textbf{Example 1.2} of \cite{rings2}). In order to get a local $\ci-$ring we must require an extra condition, that we are going to see later on.

Recall, from Commutative Algebra, that the radical of an ideal $I$ of a commutative unital ring $R$ is given by:

$$\sqrt{I} = \{ x \in R | (\exists n \in \mathbb{N})(x^n \in I)\}.$$

There are several characterizations of this concept, among which we highlight the following ones:

$$\sqrt{I} = \bigcap \{ \mathfrak{p} \in {\rm Spec}\,(R) | I \subseteq \mathfrak{p}  \} = \{ x \in R | \left( \dfrac{R}{I}\right)[(x+I)^{-1}] \cong 0\}.$$

The latter equality is the one which motivates our next definition.

\begin{definition}\label{defrad} ({\rm cf. p. 329 of \cite{rings1}}) Let $A$ be a $\mathcal{C}^{\infty}-$ring and let $I \subseteq A$ be a proper ideal. The \index{$\mathcal{C}^{\infty}-$radical}\textbf{$\mathcal{C}^{\infty}-$radical of $I$} is given by:

$$\sqrt[\infty]{I}:= \{ a \in A | \left( \dfrac{A}{I}\right)\{ (a+I)^{-1}\} \cong 0\}$$
\end{definition}

\begin{definition}\label{sat-def} ({\rm cf. {\bf Definition 2.1.5} of \cite{tese}})
Given a $\ci$-ring $A$ and a subset $S \subseteq A$, we define the {\bf $\ci-$saturation} of $S$ by:

$$S^{\infty-{\rm sat}} := \{ a \in A \mid \eta_S(a) \in A^{\times}\}.$$

\end{definition}

\begin{example} \label{sat-exa}

Given $\varphi \in \ci(\R^n)$, we have $\{\varphi\}^{\infty-{\rm sat}} = \{ \psi \in \ci(\R^n) \mid Z(\psi) \subseteq Z(\varphi)\}$. 

\end{example}

The concept of $\ci-$saturation is similar to the ordinary (ring-theoretic) concept of saturation in many aspects (for a detailed account of this concept, see \cite{BM2}). In particular, we use it to give a characterization of the elements of $\ci-$radical ideals.

\begin{proposition}\label{alba}{\rm [ \bf Proposition 19} of {\rm \cite{BM2}]} Let $A$ be a $\mathcal{C}^{\infty}-$ring and let $I \subseteq A$ be any ideal. We have the following equalities:
  $$\sqrt[\infty]{I} = \{ a \in A | (\exists b \in I)\& (\eta_a(b) \in (A\{ a^{-1}\})^{\times}) \} = \{ a \in A | I \cap \{ a\}^{\infty-{\rm sat}} \neq \varnothing\}$$
  where $\eta_a : A \to A\{ a^{-1}\}$ is the $\ci-$homomorphism of fractions with respect to $\{ a\}$.
\end{proposition}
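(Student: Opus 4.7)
The second equality is essentially a matter of unpacking notation: by Definition \ref{sat-def}, $\{a\}^{\infty-{\rm sat}} = \{b \in A : \eta_a(b) \in (A\{a^{-1}\})^{\times}\}$, so the existence of $b \in I$ with $\eta_a(b)$ a unit is exactly the condition $I \cap \{a\}^{\infty-{\rm sat}} \neq \varnothing$. The real content lies in the first equality, and my plan is to first reduce it to an ideal-theoretic statement inside $A\{a^{-1}\}$. Comparing universal properties of $\ci$-localization and $\ci$-quotient, a $\ci$-homomorphism $A \to B$ that annihilates $I$ and sends $a$ to a unit factors uniquely through either $(A/I)\{(a+I)^{-1}\}$ or $A\{a^{-1}\}/\langle \eta_a(I)\rangle$; hence these two $\ci$-rings are canonically isomorphic. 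Therefore $a \in \sqrt[\infty]{I}$ iff $A\{a^{-1}\}/\langle \eta_a(I)\rangle \cong 0$ iff $1 \in \langle \eta_a(I)\rangle$.

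It then remains to show that $1 \in \langle \eta_a(I)\rangle$ iff some single $b \in I$ satisfies $\eta_a(b) \in (A\{a^{-1}\})^{\times}$. The $(\Leftarrow)$ direction is immediate: if $u$ inverts $\eta_a(b)$ then $1 = u \cdot \eta_a(b) \in \langle \eta_a(I)\rangle$. For $(\Rightarrow)$, write $1 = \sum_{i=1}^{k} r_i \eta_a(b_i)$ with $r_i \in A\{a^{-1}\}$ and $b_i \in I$, and set
\[
b := b_1^2 + \cdots + b_k^2 \in I.
\]
Then $\eta_a(b) = \sum_i \eta_a(b_i)^2$, and the task reduces to showing that this sum of squares is a unit in $A\{a^{-1}\}$.

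The main obstacle is therefore the following lemma about $\ci$-rings: \emph{if $c_1, \dots, c_k$ generate the unit ideal of a $\ci$-ring $B$, then $\sum_i c_i^2 \in B^{\times}$.} This is false in general commutative rings (e.g.\ $(2,3) = \mathbb{Z}$ while $4+9=13$ is a non-unit in $\mathbb{Z}$), so it is precisely here that the $\ci$-structure must be exploited. I would argue via maximal ideals: an element of $B$ is a unit iff it lies in no maximal ideal, and for any maximal ideal $\mathfrak{m} \subset B$, the quotient $B/\mathfrak{m}$ is a $\ci$-field --- the congruence-by-ideals classification makes $B/\mathfrak{m}$ a $\ci$-ring, and maximality makes it a field. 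By Moerdijk--Reyes's theorem that every $\ci$-field is real closed (\textbf{Theorem 2.10} of \cite{rings1}), $B/\mathfrak{m}$ is in particular formally real, so a sum of squares there vanishes only when each summand does. Since $(c_1,\dots,c_k)=B \not\subseteq \mathfrak{m}$, at least one $c_i$ has nonzero image modulo $\mathfrak{m}$, forcing $\sum_j c_j^2 \notin \mathfrak{m}$. Since $\mathfrak{m}$ was arbitrary, $\sum_j c_j^2$ is a unit. Applied to $B = A\{a^{-1}\}$ with $c_i = \eta_a(b_i)$, this completes the proof.
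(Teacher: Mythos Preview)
The paper itself does not prove this proposition; it is simply quoted from \cite{BM2}. So there is no in-paper argument to compare against, and the question is whether your proof stands on its own. It does: the reduction $(A/I)\{(a+I)^{-1}\}\cong A\{a^{-1}\}/\langle\eta_a(I)\rangle$ via universal properties is correct, the equivalence $a\in\sqrt[\infty]{I}\iff 1\in\langle\eta_a(I)\rangle$ follows, and your trick of replacing finitely many $b_i\in I$ by the single element $b=\sum b_i^2$ is exactly the right move. The key lemma (``if $c_1,\dots,c_k$ generate the unit ideal of a $\ci$-ring $B$ then $\sum c_i^2\in B^\times$'') is correctly proved by passing to $\ci$-fields $B/\mathfrak m$ and using that these are formally real.

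One remark on efficiency: invoking the Moerdijk--Reyes real-closedness theorem is heavier than necessary. The lemma has a two-line proof using only the $\ci$-structure. The function $x\mapsto 1/(1+x_1^2+\cdots+x_m^2)$ is smooth on all of $\mathbb R^m$, so the identity $(1+\sum x_i^2)\cdot\frac{1}{1+\sum x_i^2}=1$ holds in every $\ci$-ring; hence $1+\sum B^2\subseteq B^\times$ for \emph{any} $\ci$-ring $B$ (this is the content of Corollary~\ref{boundedinv}, but the direct argument works without any $\ci$-reducedness hypothesis). Now if $\sum r_ic_i=1$, Lagrange's identity gives
\[
\Bigl(\sum_i c_i^2\Bigr)\Bigl(\sum_i r_i^2\Bigr)=\Bigl(\sum_i r_ic_i\Bigr)^2+\sum_{i<j}(c_ir_j-c_jr_i)^2\in 1+\sum B^2\subseteq B^\times,
\]
so $\sum c_i^2$ is a unit. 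This avoids maximal ideals and the real-closedness theorem altogether, and makes the proposition logically prior to the order-theoretic material in Sections~3--4 rather than dependent on an external deep result.
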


In ordinary Commutative Algebra, given an element $x$ of a ring $R$, we say that $x$ is a nilpotent infinitesimal if and only if there is some $n \in \mathbb{N}$ such that $x^n=0$. Let $A$ be a  $\mathcal{C}^{\infty}-$ring and $a \in A$. D. Borisov and K. Kremnizer in \cite{Borisov} call $a$ an $\infty-$infinitesimal if, and only if $A\{ a^{-1}\} \cong 0$. The next definition describes the notion of a $\mathcal{C}^{\infty}-$ring being free of $\infty-$infinitesimals - which is analogous to the notion of ``reducedness'', of a commutative ring.\\

\begin{definition}A $\mathcal{C}^{\infty}-$ring $A$ is \index{$\mathcal{C}^{\infty}-$reduced}\textbf{$\mathcal{C}^{\infty}-$reduced} if, and only if, $\sqrt[\infty]{(0)} = (0)$.
\end{definition}

\begin{example}\label{free} The simplest example of $\ci-$reduced $\ci-$rings is the free $\ci-$rings on any set of generators $E$ (cf. {\rm \bf Proposition 37} of \cite{BM2}).
\end{example}

Next we  register some useful results  on $\ci-$radical ideals and $\ci-$reduced $\ci-$rings.

\begin{proposition}[{\rm {\bf Proposition 31}, \cite{BM2}}] \label{redi}
Let $A', B'$ be two $\mathcal{C}^{\infty}-$rings and $\jmath: A' \to B'$ be a monomorphism. If $B'$ is $\mathcal{C}^{\infty}-$reduced, then $A'$ is also $\mathcal{C}^{\infty}-$reduced.
\end{proposition}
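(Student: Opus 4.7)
The plan is to show that every element of $\sqrt[\infty]{(0)_{A'}}$ must be zero by transporting the ``$\infty$-infinitesimal'' condition along $\jmath$ into $B'$, where it forces the image to vanish, and then using injectivity of $\jmath$.

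First I would take an arbitrary $a \in \sqrt[\infty]{(0)_{A'}}$, so by Definition \ref{defrad} we have $A'\{a^{-1}\} \cong 0$. I want to deduce that $\jmath(a) \in \sqrt[\infty]{(0)_{B'}}$, that is, $B'\{\jmath(a)^{-1}\} \cong 0$. To see this, consider the composition $\eta_{\{\jmath(a)\}} \circ \jmath \colon A' \to B' \to B'\{\jmath(a)^{-1}\}$. This $\ci$-homomorphism sends $a$ to the unit $\eta_{\{\jmath(a)\}}(\jmath(a)) \in (B'\{\jmath(a)^{-1}\})^{\times}$, so the universal property of the $\ci$-ring of fractions (Definition \ref{Alem}) yields a unique $\ci$-homomorphism
\[
\widetilde{\jmath} \colon A'\{a^{-1}\} \longrightarrow B'\{\jmath(a)^{-1}\}
\]
such that $\widetilde{\jmath} \circ \eta_{\{a\}} = \eta_{\{\jmath(a)\}} \circ \jmath$.

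Next, since $A'\{a^{-1}\} \cong 0$, we have $1 = 0$ in $A'\{a^{-1}\}$; but every $\ci$-homomorphism preserves the constants $0$ and $1$ (these are $0$-ary operations in the signature, see the paragraph following Definition \ref{cabala}), so $1 = \widetilde{\jmath}(1) = \widetilde{\jmath}(0) = 0$ in $B'\{\jmath(a)^{-1}\}$. Hence $B'\{\jmath(a)^{-1}\} \cong 0$, which shows $\jmath(a) \in \sqrt[\infty]{(0)_{B'}}$. By hypothesis $B'$ is $\ci$-reduced, so $\sqrt[\infty]{(0)_{B'}} = (0)$ and therefore $\jmath(a) = 0$.

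Finally, since $\ci{\rm \bf Rng}$ is a variety of algebras, monomorphisms coincide with injective homomorphisms; thus $\jmath$ is injective and $\jmath(a) = 0$ forces $a = 0$. This proves $\sqrt[\infty]{(0)_{A'}} = (0)$, that is, $A'$ is $\ci$-reduced. The only step that warrants care is the application of the universal property to produce $\widetilde{\jmath}$, and the fact that the trivial $\ci$-ring is absorbing under $\ci$-homomorphisms into any $\ci$-ring satisfying $1=0$; both are immediate once one remembers that $\ci$-homomorphisms preserve the $0$-ary operations corresponding to the real constants $0$ and $1$.
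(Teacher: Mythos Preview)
Your argument is correct. The key step---using the universal property of $A'\{a^{-1}\}$ applied to $\eta_{\{\jmath(a)\}}\circ\jmath$ to produce a $\ci$-homomorphism $\widetilde{\jmath}\colon A'\{a^{-1}\}\to B'\{\jmath(a)^{-1}\}$, and then observing that a $\ci$-homomorphism out of the trivial ring forces its target to be trivial---is sound, and the passage from ``$\jmath$ is a monomorphism'' to ``$\jmath$ is injective'' is justified by the existence of free objects in the variety $\ci{\rm\bf Rng}$.

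Note that the paper does not actually supply its own proof of this proposition: it is only quoted from \cite{BM2}, so there is no in-paper argument to compare yours against. Your proof is a clean, self-contained verification that fits the framework laid out in Section~1.
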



\begin{proposition}  \label{rad-pr}
Let $A$ be a $\mathcal{C}^{\infty}-$ring. We have:
\begin{itemize}
  \item[(a)]{An ideal $J \subseteq A$  is a $\mathcal{C}^{\infty}-$radical ideal if, and only if, $\dfrac{A}{J}$ is a $\mathcal{C}^{\infty}-$reduced $\mathcal{C}^{\infty}-$ring}.
  \item[(b)]{A proper prime ideal $\mathfrak{p} \subseteq A$ is $\mathcal{C}^{\infty}-$radical if, and only if, $\dfrac{A}{\mathfrak{p}}$ is a $\mathcal{C}^{\infty}-$reduced $\mathcal{C}^{\infty}-$domain.}
\end{itemize}
\end{proposition}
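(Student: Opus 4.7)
The plan is to reduce part (a) to a single clean identity: for any ideal $J \subseteq A$ and any $a \in A$,
\[
a \in \sqrt[\infty]{J} \iff (a + J) \in \sqrt[\infty]{(0)} \quad \text{in } A/J.
\]
This follows almost directly from Definition \ref{defrad}: the left-hand side says $(A/J)\{(a+J)^{-1}\} \cong 0$, and the right-hand side says $((A/J)/(0))\{((a+J)+(0))^{-1}\} \cong 0$, and these two $\ci$-rings of fractions are canonically isomorphic (the quotient by the zero ideal is the identity). Once this is established, (a) is an easy bookkeeping step: since $J \subseteq \sqrt[\infty]{J}$ always holds (for $a \in J$, the quotient $A/J$ already makes $a+J = 0$, so $(A/J)\{0^{-1}\} \cong 0$), the image of $\sqrt[\infty]{J}$ under the canonical surjection $A \twoheadrightarrow A/J$ equals $\sqrt[\infty]{(0)} \subseteq A/J$. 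Therefore $\sqrt[\infty]{J} = J$ if and only if $\sqrt[\infty]{(0)} = (0)$ in $A/J$, which is exactly the statement that $A/J$ is $\ci$-reduced.

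For part (b), the plan is to simply combine (a) with the classical ring-theoretic fact that a proper ideal $\mathfrak{p}$ is prime if and only if the quotient ring $A/\mathfrak{p}$ is an integral domain (recall that ``$\ci$-domain'' is defined here purely in terms of the underlying ring being a domain). Thus, assuming $\mathfrak{p}$ is a proper prime ideal, the quotient $A/\mathfrak{p}$ is automatically a $\ci$-domain, and (a) tells us that $\mathfrak{p}$ is $\ci$-radical exactly when $A/\mathfrak{p}$ is additionally $\ci$-reduced. Both directions of the equivalence in (b) then follow without extra work.

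The only substantive step is verifying the identity $(A/J)\{(a+J)^{-1}\} \cong ((A/J)/(0))\{((a+J)+(0))^{-1}\}$, and I expect this to be the main (though minor) obstacle: one should make explicit that quotienting by $(0)$ produces an isomorphic $\ci$-ring and that the universal property of $\ci$-rings of fractions (Definition \ref{Alem}) is preserved under this identification. Everything else is formal manipulation at the level of ideals and quotients, using the correspondence between ideals of $A$ containing $J$ and ideals of $A/J$.
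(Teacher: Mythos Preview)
Your argument is correct. The key identity $a \in \sqrt[\infty]{J} \iff (a+J) \in \sqrt[\infty]{(0_{A/J})}$ follows immediately from Definition~\ref{defrad} together with the canonical identification $(A/J)/(0) \cong A/J$, and you are right that $J \subseteq \sqrt[\infty]{J}$ (since $(A/J)\{0^{-1}\}$ is the zero ring). From this, $q_J^{-1}(\sqrt[\infty]{(0)}) = \sqrt[\infty]{J}$, and so $\sqrt[\infty]{J} = J$ is equivalent to $\sqrt[\infty]{(0)} = (0)$ in $A/J$, which is precisely part~(a). Part~(b) is then, as you say, the conjunction of (a) with the ring-theoretic characterization of prime ideals via quotients.

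The paper itself does not give an argument here: it simply refers the reader to \textbf{Corollary~10} of \cite{BM2}. Your proof is the natural direct verification one would expect to find there, and it is self-contained using only the definitions already stated in the present paper. So there is no genuine methodological difference to report; you have supplied what the paper outsources.
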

\begin{proof}
See \textbf{Corollary 10} of \cite{BM2}. 
\end{proof}

Next we present some properties of $\mathcal{C}^{\infty}-$radical ideals of a $\mathcal{C}^{\infty}-$ring $A$ regarding some ``operations'' such as the intersection,  the directed union and the preimage by a $\ci-$homomorphism of $\ci-$radical ideals. To simplify the notation, given a $\ci-$ring $A$, we denote by $\mathfrak{I}^{\infty}_{A}$ the set of all its $\ci-$radical ideals. The proofs of the results given in the next proposition can be found in \cite{BM2}.

\begin{proposition}\label{egito}The following results hold:
\begin{itemize}
\item[(a)]{Suppose that $(\forall \alpha \in \Lambda)(I_{\alpha} \in \mathfrak{I}^{\infty}_A)$. Then $\bigcap_{\alpha \in \Lambda} I_{\alpha} \in \mathfrak{I}^{\infty}_{A}$, that is, if $(\forall \alpha \in \Lambda)(I_{\alpha} \in \mathfrak{I}^{\infty}_A)$, then:
    $$\sqrt[\infty]{\bigcap_{\alpha \in \Lambda}I_{\alpha}} = \bigcap_{\alpha \in \Lambda}I_{\alpha} = \bigcap_{\alpha \in \Lambda} \sqrt[\infty]{I_{\alpha}}$$}
\item[(b)]{Let $\{ I_{\alpha} | \alpha \in \Sigma \}$ an upwards directed family of elements of $\mathfrak{I}^{\infty}_{A}$. Then $\bigcup_{\alpha \in \Sigma} I_{\alpha} \in \mathfrak{I}^{\infty}_{A}$.}
\item[(c)]{Let $A,B$ be $\mathcal{C}^{\infty}-$rings, $f: A \to B$ a $\mathcal{C}^{\infty}-$homomorphism and $J \subseteq B$ any ideal. Then:
$$\sqrt[\infty]{f^{\dashv}[J]} \subseteq f^{\dashv}[\sqrt[\infty]{J}].$$}
\item[(d)]{Let $A,B$ be $\mathcal{C}^{\infty}-$rings, $f: A \to B$ be a $\mathcal{C}^{\infty}-$homomorphism and $J \subseteq B$ be a $\ci-$radical ideal. Then $f^{\dashv}[J]$ is a $\ci$-radical ideal of $A$.}
\item[(e)]{Given any two $\mathcal{C}^{\infty}-$radical ideals of a $\mathcal{C}^{\infty}-$ring $A$, $I,J \in \mathfrak{I}^{\infty}_{A}$, we have:
$$\sqrt[\infty]{I \cdot J} = \sqrt[\infty]{I \cap J}$$}
\end{itemize}
\end{proposition}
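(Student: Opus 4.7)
The plan is to unify the treatment of (a)--(d) via the characterization provided by Proposition \ref{alba}: $a \in \sqrt[\infty]{I}$ if and only if there exists $b \in I$ with $\eta_a(b) \in (A\{a^{-1}\})^{\times}$. Part (e) will require one additional observation linking $I \cap J$ to $I \cdot J$. In each part the nontrivial inclusion to establish is that the $\ci$-radical of the ideal on the left is contained in the ideal itself, since the reverse inclusion is automatic.

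For (a) I would start with $a \in \sqrt[\infty]{\bigcap_\alpha I_\alpha}$ and use Proposition \ref{alba} to produce $b \in \bigcap_\alpha I_\alpha$ with $\eta_a(b) \in (A\{a^{-1}\})^{\times}$. Since $b \in I_\alpha$ for every $\alpha$, the same proposition yields $a \in \sqrt[\infty]{I_\alpha} = I_\alpha$ by $\ci$-radicality, so $a \in \bigcap_\alpha I_\alpha$. Part (b) proceeds analogously (and is in fact simpler): the witness $b$ lies in some single $I_{\alpha_0}$, whence $a \in \sqrt[\infty]{I_{\alpha_0}} = I_{\alpha_0} \subseteq \bigcup_\alpha I_\alpha$. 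Directedness is not needed for this step, though it is needed to know that $\bigcup_\alpha I_\alpha$ is itself an ideal.

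For (c) the key ingredient is functoriality of smooth localization: a $\ci$-homomorphism $f \colon A \to B$ sending $a$ to a unit of $B$ factors uniquely through $\eta_a$. Applying this to $\eta_{f(a)} \circ f \colon A \to B\{f(a)^{-1}\}$ yields a canonical $\widetilde{f}\colon A\{a^{-1}\} \to B\{f(a)^{-1}\}$ with $\widetilde{f} \circ \eta_a = \eta_{f(a)} \circ f$. Then for $a \in \sqrt[\infty]{f^{\dashv}[J]}$, choosing $b \in f^{\dashv}[J]$ with $\eta_a(b)$ invertible and pushing forward gives $\eta_{f(a)}(f(b)) = \widetilde{f}(\eta_a(b)) \in (B\{f(a)^{-1}\})^{\times}$, since $\widetilde{f}$ preserves units; thus $f(b) \in J$ witnesses $f(a) \in \sqrt[\infty]{J}$, i.e., $a \in f^{\dashv}[\sqrt[\infty]{J}]$. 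Part (d) is then immediate: when $J$ is $\ci$-radical, (c) yields $\sqrt[\infty]{f^{\dashv}[J]} \subseteq f^{\dashv}[\sqrt[\infty]{J}] = f^{\dashv}[J]$.

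For (e), monotonicity of $\sqrt[\infty]{\cdot}$ combined with $I \cdot J \subseteq I \cap J$ handles one inclusion. For the reverse, I would take $a \in \sqrt[\infty]{I \cap J}$ and a witness $b \in I \cap J$ with $\eta_a(b) \in (A\{a^{-1}\})^{\times}$; then $b^2 = b \cdot b \in I \cdot J$ (one factor in $I$, the other in $J$) and $\eta_a(b^2) = \eta_a(b)^2$ is still a unit, so Proposition \ref{alba} delivers $a \in \sqrt[\infty]{I \cdot J}$. The main obstacle throughout is citational rather than technical: one must ensure Proposition \ref{alba} has been proved independently of these statements, lest the argument become circular; granting it, every step above is essentially a one-line verification.
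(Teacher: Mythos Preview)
The paper does not supply its own proof of this proposition; it simply cites \cite{BM2}. Your argument is correct and self-contained: each part reduces cleanly to the characterization in Proposition~\ref{alba}, the functoriality step in (c) is exactly the universal property of the $\ci$-ring of fractions from Definition~\ref{Alem}, and your concern about circularity is well placed but harmless here since Proposition~\ref{alba} is itself quoted from \cite{BM2} and does not rely on the present statement. One small bonus worth noting: your proof of (e) nowhere uses the hypothesis that $I$ and $J$ are $\ci$-radical, so you have in fact established $\sqrt[\infty]{I\cdot J}=\sqrt[\infty]{I\cap J}$ for arbitrary ideals $I,J$ of $A$.
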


For a more comprehensive account of $\ci-$radical ideals and $\ci-$reduced $\ci$-rings, we refer the reader to \cite{BM2}.

\subsection{On the Smooth Zariski Spectrum}

Recall that the spectrum of a commutative unital ring $R$ consists of all prime ideals of $R$, together with a spectral topology - given by its ``distinguished basic sets'', its Zariski topology. Recall, also, that in ordinary Commutative Algebra, every prime ideal is radical - and that the $\ci-$version of this implication is false in the context of Smooth Commutative Algebra (not every prime ideal of a $\ci-$ring is $\ci-$radical). At this point it is natural to look for a $\mathcal{C}^{\infty}-$analog of the Zariski spectrum of a commutative unital ring. Keeping in mind the definitions of the previous subsection, we give the following definition, that can be found in \cite{rings2}:

\begin{definition}For a $\mathcal{C}^{\infty}-$ring $A$, we define:
$${\rm Spec}^{\infty}\,(A) = \{ \mathfrak{p} \in {\rm Spec}\,(A) | \mathfrak{p} \, \mbox{is} \, \mathcal{C}^{\infty}-\mbox{radical} \}$$
equipped with the smooth Zariski topology defined by its basic open sets:
$$D^{\infty}(a) = \{ \mathfrak{p} \in {\rm Spec}^{\infty}\,(A) | a \notin \mathfrak{p} \} $$
It is proved (see {\rm \cite{BM2}}) that ${\rm Spec}^{\infty}\,(A)$, for any $\ci-$ring $A$, is a spectral space.
\end{definition}


\begin{proposition}[{\rm {\bf Proposition 48}}, \cite{BM2}]Let $A,A'$ be two $\mathcal{C}^{\infty}-$rings and let $f: A \to A'$ be a $\mathcal{C}^{\infty}-$homo\-morphism. The function:
$$\begin{array}{cccc}
    h^{*}: & {\rm Spec}^{\infty}\,(A') & \rightarrow & {\rm Spec}^{\infty}\,(A) \\
     & \mathfrak{p} & \mapsto & h^{\dashv}[\mathfrak{p}]
  \end{array}$$
is a spectral map.
\end{proposition}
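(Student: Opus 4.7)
The plan is to verify two things: first, that the map $f^*$ (the statement writes $h^*$ and $h^{\dashv}$, but clearly means the homomorphism $f$) is well-defined as a function $\mathrm{Spec}^{\infty}(A') \to \mathrm{Spec}^{\infty}(A)$; second, that it is spectral, meaning continuous with the additional property that preimages of quasi-compact open sets are quasi-compact.

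For well-definedness, given $\mathfrak{p} \in \mathrm{Spec}^{\infty}(A')$, I need $f^{\dashv}[\mathfrak{p}]$ to be both prime and $\ci$-radical in $A$. Primality is standard commutative algebra: if $ab \in f^{\dashv}[\mathfrak{p}]$ then $f(a)f(b) = f(ab) \in \mathfrak{p}$, forcing $f(a) \in \mathfrak{p}$ or $f(b) \in \mathfrak{p}$, i.e., $a \in f^{\dashv}[\mathfrak{p}]$ or $b \in f^{\dashv}[\mathfrak{p}]$. Properness follows since $1 \notin \mathfrak{p}$ and $f(1) = 1$. For $\ci$-radicality, this is exactly the content of \textbf{Proposition \ref{egito}(d)}: preimages of $\ci$-radical ideals under $\ci$-homomorphisms are $\ci$-radical. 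So $f^{\dashv}[\mathfrak{p}] \in \mathrm{Spec}^{\infty}(A)$.

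For the topological properties, the key identity to establish is
$$(f^*)^{-1}[D^{\infty}(a)] = D^{\infty}(f(a)) \quad \text{for every } a \in A.$$
This is immediate from unravelling the definitions: $\mathfrak{p} \in (f^*)^{-1}[D^{\infty}(a)]$ iff $a \notin f^{\dashv}[\mathfrak{p}]$ iff $f(a) \notin \mathfrak{p}$ iff $\mathfrak{p} \in D^{\infty}(f(a))$. Since $\{D^{\infty}(a) : a \in A\}$ is a basis for the smooth Zariski topology on $\mathrm{Spec}^{\infty}(A)$, continuity of $f^*$ follows at once.

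Finally, to upgrade continuity to spectrality, recall that in a spectral space the basic distinguished opens $D^{\infty}(a)$ are themselves quasi-compact, and these form a basis closed under finite intersections (since $D^{\infty}(a) \cap D^{\infty}(b) = D^{\infty}(ab)$). Because the preimage of each basic quasi-compact open $D^{\infty}(a)$ is again a basic quasi-compact open $D^{\infty}(f(a))$, and since an arbitrary quasi-compact open in a spectral space is a \emph{finite} union of basic ones, its preimage is a finite union of basic quasi-compact opens, hence quasi-compact. Thus $f^*$ is a spectral map. There is no real obstacle here; the whole argument is formal once one has \textbf{Proposition \ref{egito}(d)} in hand to handle the $\ci$-radicality of the preimage, which is the only non-classical ingredient.
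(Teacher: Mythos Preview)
Your argument is correct and is exactly the standard one: well-definedness via primality plus \textbf{Proposition \ref{egito}(d)} for $\ci$-radicality, the identity $(f^*)^{-1}[D^{\infty}(a)] = D^{\infty}(f(a))$ for continuity, and the fact that quasi-compact opens in a spectral space are finite unions of basic opens to conclude spectrality. Note that the paper does not actually supply a proof of this proposition---it is quoted from \cite{BM2} and stated here without argument---so there is nothing to compare against beyond observing that your proof is the expected one.
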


\begin{theorem} (\textbf{Separation Theorems}, {\rm \cite{separation}})\label{TS} Let $A$ be a $\mathcal{C}^{\infty}-$ring, $S \subseteq A$ be a subset of $A$ and $I$ be an ideal of $A$. Denote by $\langle S \rangle$ the multiplicative submonoid of $A$ generated by $S$. We have:
\begin{itemize}
  \item[(a)]{If $I$ is a  $\mathcal{C}^{\infty}-$radical ideal, then:
$$I \cap \langle S \rangle = \varnothing \iff {I} \cap S^{\infty-{\rm sat}} = \varnothing$$}
  \item[(b)]{If  $S \subseteq A$ is a $\mathcal{C}^{\infty}$-saturated subset, then:
$$I \cap S = \varnothing \iff \sqrt[\infty]{I} \cap S = \varnothing$$
}
 \item[(c)]{If $\mathfrak{p} \in {\rm Spec}^{\infty}\,(A)$, then $A\setminus \mathfrak{p} = (A \setminus \mathfrak{p})^{\infty-{\rm sat}}$}
 \item[(d)]{If  $S \subseteq A$ is a $\mathcal{C}^{\infty}$-saturated subset, then:
$$I \cap S = \varnothing \iff (\exists \mathfrak{p} \in {\rm Spec}^{\infty}\,(A))((I \subseteq \mathfrak{p})\& (\mathfrak{p} \cap S = \varnothing)).$$}
 \item[(e)]{$\sqrt[\infty]{I} = \bigcap \{ \mathfrak{p} \in {\rm Spec}^{\infty}\,(A) | I \subseteq \mathfrak{p} \}$}
\end{itemize}
\end{theorem}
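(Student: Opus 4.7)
The plan is to prove the five items in the order (c), (b), (d), (e), (a), using Proposition~\ref{alba} (the characterization $a \in \sqrt[\infty]{I} \iff I \cap \{a\}^{\infty-{\rm sat}} \neq \varnothing$) repeatedly, together with Proposition~\ref{rad-pr}(b) and the directed-union stability of $\ci$-radicals from Proposition~\ref{egito}(b).

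For (c), I would fix $\mathfrak{p} \in {\rm Spec}^{\infty}(A)$. By Proposition~\ref{rad-pr}(b), $A/\mathfrak{p}$ is a $\ci$-reduced $\ci$-domain, which embeds into its $\ci$-field of fractions $K$. The composite $\varphi : A \twoheadrightarrow A/\mathfrak{p} \hookrightarrow K$ sends every $s \in A \setminus \mathfrak{p}$ to a non-zero, hence invertible, element of $K$, so by the universal property of $\eta_{A \setminus \mathfrak{p}}$ it factors as $\varphi = \tilde{\varphi} \circ \eta_{A \setminus \mathfrak{p}}$. For $a \in \mathfrak{p}$ the image $\varphi(a) = 0$ fails to be a unit of $K$, so $\eta_{A \setminus \mathfrak{p}}(a)$ cannot be a unit of $A\{(A \setminus \mathfrak{p})^{-1}\}$ either; hence $\mathfrak{p} \cap (A \setminus \mathfrak{p})^{\infty-{\rm sat}} = \varnothing$, which combined with the automatic inclusion $A \setminus \mathfrak{p} \subseteq (A \setminus \mathfrak{p})^{\infty-{\rm sat}}$ gives the equality.

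Item (b) is then immediate: the $\Leftarrow$ direction follows from $I \subseteq \sqrt[\infty]{I}$; for the other direction, if $a \in \sqrt[\infty]{I} \cap S$, Proposition~\ref{alba} yields $b \in I \cap \{a\}^{\infty-{\rm sat}}$, and the universal property with $\{a\} \subseteq S$ produces a factor map $A\{a^{-1}\} \to A\{S^{-1}\}$ that forces $\{a\}^{\infty-{\rm sat}} \subseteq S^{\infty-{\rm sat}} = S$, giving $b \in I \cap S$, a contradiction. For (d), the converse direction is trivial; for the forward direction I apply Zorn's Lemma to the family of $\ci$-radical ideals containing $I$ and disjoint from $S$ (non-empty by (b), stable under directed unions by Proposition~\ref{egito}(b)) to obtain a maximal element $\mathfrak{p}$; primality is handled by the standard trick — if $xy \in \mathfrak{p}$ with $x, y \notin \mathfrak{p}$, maximality produces $s_i \in S \cap \sqrt[\infty]{(\mathfrak{p}, x_i)}$ (with $x_1 = x$, $x_2 = y$), and Proposition~\ref{alba} gives $b_i \in (\mathfrak{p}, x_i) \cap \{s_i\}^{\infty-{\rm sat}}$ whose product lies in $\mathfrak{p}$ (since $xy \in \mathfrak{p}$) and in $\{s_1 s_2\}^{\infty-{\rm sat}}$; applying Proposition~\ref{alba} in reverse yields $s_1 s_2 \in \sqrt[\infty]{\mathfrak{p}} = \mathfrak{p}$, while $s_1 s_2 \in S$ (as $S = S^{\infty-{\rm sat}}$ is multiplicatively closed), contradicting $\mathfrak{p} \cap S = \varnothing$.

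Items (e) and (a) follow quickly. For (e), $\supseteq$ uses $\sqrt[\infty]{I} \subseteq \sqrt[\infty]{\mathfrak{p}} = \mathfrak{p}$ for any $\ci$-radical prime $\mathfrak{p} \supseteq I$, while for $\subseteq$, whenever $a \notin \sqrt[\infty]{I}$, Proposition~\ref{alba} gives $I \cap \{a\}^{\infty-{\rm sat}} = \varnothing$ and (d) applied to the $\ci$-saturated set $\{a\}^{\infty-{\rm sat}}$ supplies $\mathfrak{p} \in {\rm Spec}^{\infty}(A)$ with $I \subseteq \mathfrak{p}$ and $a \notin \mathfrak{p}$. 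For (a), $\Leftarrow$ is immediate from $\langle S \rangle \subseteq S^{\infty-{\rm sat}}$; for $\Rightarrow$ I rerun the Zorn argument of (d) with the multiplicative monoid $\langle S \rangle$ in place of $S$, starting from $I$ itself (which belongs to the Zorn family since $I$ is $\ci$-radical by hypothesis and disjoint from $\langle S \rangle$), to produce a $\ci$-radical prime $\mathfrak{p} \supseteq I$ with $\mathfrak{p} \cap \langle S \rangle = \varnothing$; by (c), $A \setminus \mathfrak{p}$ is $\ci$-saturated and, containing $\langle S \rangle$, contains its $\ci$-saturation $\langle S \rangle^{\infty-{\rm sat}} = S^{\infty-{\rm sat}}$, whence $I \cap S^{\infty-{\rm sat}} \subseteq \mathfrak{p} \cap S^{\infty-{\rm sat}} = \varnothing$. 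The main obstacle is (c): it rests on the existence of a $\ci$-field of fractions for a $\ci$-reduced $\ci$-domain, a non-trivial construction to be imported from \cite{rings1} or \cite{BM2} rather than reproved. A secondary delicate point is the primality step of the Zorn argument, where one must check that $\{s_1\}^{\infty-{\rm sat}} \cdot \{s_2\}^{\infty-{\rm sat}} \subseteq \{s_1 s_2\}^{\infty-{\rm sat}}$ via the universality of the fraction maps $A\{s_i^{-1}\} \to A\{(s_1 s_2)^{-1}\}$, so that the classical commutative-algebra trick goes through in the $\ci$-setting.
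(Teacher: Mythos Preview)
The paper does not prove Theorem~\ref{TS}; it merely records the statement and cites \cite{separation} (and \textbf{Theorem 45} of \cite{BM2}) for the proof. So there is no ``paper's own proof'' to compare against. Your sketch is correct and follows the expected route for such separation results: the order $(\mathrm{c})\to(\mathrm{b})\to(\mathrm{d})\to(\mathrm{e})\to(\mathrm{a})$ is natural, the Zorn argument for (d) is the right tool, and the two points you flag as delicate --- the $\ci$-field of fractions in (c) (available as Proposition~\ref{field-prop}(2), imported from \cite{BM2}) and the inclusion $\{s_1\}^{\infty-{\rm sat}}\cdot\{s_2\}^{\infty-{\rm sat}}\subseteq\{s_1s_2\}^{\infty-{\rm sat}}$ via the universal property --- are exactly where care is needed. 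One small implicit use worth making explicit: in the Zorn step you need that $\sqrt[\infty]{J}$ is again an \emph{ideal} for any ideal $J$ (so that $\sqrt[\infty]{(\mathfrak{p},x)}$ is a legitimate competitor in the Zorn family); this is established in \cite{BM2} and also follows from Proposition~\ref{lapa}(2) of the present paper.
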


A more detailed account of the smooth Zariski spectrum  containing detailed proofs can be found in \textbf{Section 5} of \cite{BM2}




\section{On smooth spaces and smooth algebras}

Every (finite dimensional) smooth manifold $M$ can be embedded as a closed subspace of some $\R^n$ (Whitney's Theorem) and  determines a $\ci$-ring, $\ci(M)$. This mapping, $M \mapsto  \ci(M)$,  extends to a full and faithful contravariant functor into the category of $\ci$-rings (see for instance {\bf Theorem 2.8} of \cite{MRmodels}). In this section we present some results that (dually)  connects subsets of $\mathbb{R}^E$ and quotients of $\ci(\mathbb{R}^E)$. More precisely, we present some very useful characterizations of equalities and inequalities between elements of $\ci-$reduced $\ci-$rings, {\em i.e.}, $\ci-$rings of the form $A=\frac{\ci(\R^E)}{I}$ with $\sqrt[\infty]{I}=I$, by means of the filter of zerosets of functions of $I$.

\subsection{The finitely generated case}

We start by recalling an important fact about the relation between closed subsets of $\R^n$ and zerosets of $\ci$-functions.

\begin{fact} \label{char} (essentially {\bf Lemma 1.4} of \cite{MRmodels}) For each open subset $U \subseteq \R^n$ there is a smooth function $\chi : \R^n \to \R$ such that:
\begin{itemize}
    \item $(\forall x \in \R^n)(\chi(x) \geq 0)$
    
    \item $(\forall x \in \R^n)((\chi(x) = 0)  \iff (x \notin U))$.
\end{itemize}

\end{fact}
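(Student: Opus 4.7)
The plan is to build $\chi$ as a carefully weighted sum of smooth bump functions whose supports exhaust $U$ from the inside. Since $\R^n$ is second countable and $U$ is open, I can choose a countable family of open balls $B_i = B(x_i, r_i)$, $i \in \N$, whose closures $\overline{B_i}$ are contained in $U$ and whose union is $U$ (pick rational centers and rational radii). For each $i$, I would use the standard smooth bump $\phi_i \in \ci(\R^n)$ defined, for instance, by
\[
\phi_i(x) = \begin{cases} \exp\!\left(-\dfrac{1}{r_i^2 - \|x - x_i\|^2}\right) & \text{if } \|x - x_i\| < r_i \\ 0 & \text{otherwise} \end{cases}
\]
so that $\phi_i \geq 0$ everywhere, $\phi_i(x) > 0$ iff $x \in B_i$, and $\phi_i$ vanishes to all orders outside $B_i$.

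Next I would define
\[
\chi(x) := \sum_{i=1}^{\infty} c_i \, \phi_i(x)
\]
with positive constants $c_i$ to be chosen. The two properties requested follow formally once this series makes sense as a smooth function: positivity is obvious, and for any $x \in U$ some $B_i$ contains $x$, giving $c_i \phi_i(x) > 0$ and thus $\chi(x) > 0$; conversely if $x \notin U$, every $\phi_i$ vanishes at $x$. Similarly $\chi \geq 0$ pointwise.

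The main technical step — and the only real obstacle — is to choose the $c_i$ so that the series and all its partial derivatives converge uniformly on $\R^n$, yielding $\chi \in \ci(\R^n)$. For each $i$, set
\[
M_i := \max\!\left\{ \sup_{x \in \R^n} |\partial^{\alpha} \phi_i(x)| : |\alpha| \leq i \right\},
\]
which is finite because $\phi_i$ has compact support. Then choose $c_i := 2^{-i}(1 + M_i)^{-1}$. With this choice, for any multi-index $\alpha$ and any $i \geq |\alpha|$ one has $\|c_i \partial^{\alpha} \phi_i\|_\infty \leq 2^{-i}$, so $\sum_i c_i \partial^{\alpha} \phi_i$ converges uniformly. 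By the standard theorem on term-by-term differentiation of uniformly convergent series, $\chi$ is smooth and $\partial^{\alpha} \chi = \sum_i c_i \partial^{\alpha} \phi_i$. Since all $c_i$ are strictly positive, the zero-set and positivity properties above hold, completing the construction.
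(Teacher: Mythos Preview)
Your argument is correct: this is the standard Whitney-type construction, and the choice $c_i = 2^{-i}(1+M_i)^{-1}$ does guarantee uniform convergence of every derivative series, so $\chi \in \ci(\R^n)$ with the required sign and zero set. Note, however, that the paper does not give its own proof of this statement at all; it records it as a \emph{Fact} and simply cites \textbf{Lemma~1.4} of Moerdijk--Reyes, so there is no ``paper's proof'' to compare against --- your write-up supplies exactly the classical argument that the cited reference contains.
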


Recall that a function $f : X \to \R$ defined over an arbitrary subset $X$ of some $\R^n$ is called {\em smooth} if there is an open subset $U \subseteq \R^n$ such that $X \subseteq U$  and a $\ci$-extension of $f$,  $\widetilde{f} : U \to \R$, such that $\widetilde{f}\upharpoonright_X = f$.

\begin{fact}[{\rm \bf Smooth Tietze Theorem}]\label{obs}  Let $F \subseteq \mathbb{R}^n$ be a {\em closed} set and let $f \in \mathcal{C}^{\infty}(F)$. Then  there is a smooth function  $\widetilde{f} \in \ci(\R^n)$ such that  $\widetilde{f}\upharpoonright_F = f$. Moreover:

\begin{itemize}

 \item If  $(\forall x \in F)(f(x) \neq 0)$, then we can choose a $\ci$-extension $\widetilde{f}$ of $f$  and an open subset $U \subseteq \R^n$ such that  $F \subseteq U$ and  $(\forall x \in U)(\widetilde{f}(x) \neq  0)$.

\item If  $(\forall x \in F)(f(x) > 0)$, then we can  choose a $\ci$-extension $\widetilde{f}$ of $f$ such that   $(\forall x \in \R^n)(\widetilde{f}(x) >  0)$.

\end{itemize} 

\end{fact}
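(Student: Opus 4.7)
The plan is to use the definition of $f \in \ci(F)$---that there exists an open $V \supseteq F$ and a smooth $g : V \to \R$ with $g\upharpoonright_F = f$---together with the standard existence of smooth partitions of unity on $\R^n$. The extension to $\R^n$ is then obtained by multiplying $g$ by a smooth cutoff identically $1$ on $F$ and with support contained in $V$; the refinements follow by continuity and a convex-combination trick.

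For the base assertion, I would choose a smooth partition of unity $\{\rho_1, \rho_2\}$ subordinate to the open cover $\{V, \R^n \setminus F\}$ of $\R^n$, so $\rho_1 + \rho_2 = 1$, $0 \le \rho_i \le 1$, $\mathrm{supp}(\rho_1) \subseteq V$, and $\mathrm{supp}(\rho_2) \subseteq \R^n \setminus F$. In particular $\rho_1 \equiv 1$ on $F$. Set $\widetilde{f}(x) := \rho_1(x)\, g(x)$ for $x \in V$ and $\widetilde{f}(x) := 0$ for $x \notin V$; this is well-defined and clearly smooth on $V$. For $x_0 \notin V$, the closed set $\mathrm{supp}(\rho_1)$ does not contain $x_0$, so some open neighborhood of $x_0$ misses $\mathrm{supp}(\rho_1)$ entirely; on this neighborhood $\widetilde{f}$ vanishes identically, giving smoothness at $x_0$. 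Finally, $\rho_1 \equiv 1$ on $F$ yields $\widetilde{f}\upharpoonright_F = g\upharpoonright_F = f$.

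For the nonvanishing refinement, I would take the $\widetilde{f}$ just constructed: since $\widetilde{f}\upharpoonright_F = f$ is nonzero on $F$, the open set $U := \{x \in \R^n : \widetilde{f}(x) \neq 0\}$ contains $F$, as required. For the strict positivity refinement, first apply the base case to obtain an extension $\widetilde{f}_1 \in \ci(\R^n)$ of $f$; by continuity, $U := \{x : \widetilde{f}_1(x) > 0\}$ is open and contains $F$. Now pick a fresh partition of unity $\{\sigma, 1-\sigma\}$ subordinate to $\{U, \R^n \setminus F\}$, so $\sigma \equiv 1$ on $F$ and $\mathrm{supp}(\sigma) \subseteq U$, and define $\widetilde{f}(x) := \sigma(x)\widetilde{f}_1(x) + (1 - \sigma(x))$. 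This is smooth, agrees with $f$ on $F$ (where $\sigma = 1$), and is strictly positive everywhere: outside $U$ we have $\sigma(x) = 0$, so $\widetilde{f}(x) = 1$; while for $x \in U$, both $\sigma(x)\widetilde{f}_1(x) \ge 0$ and $1 - \sigma(x) \ge 0$, with the sum nonzero because $\sigma(x) = 0$ forces $1 - \sigma(x) = 1$, whereas $\sigma(x) > 0$ combined with $\widetilde{f}_1(x) > 0$ on $U$ makes $\sigma(x)\widetilde{f}_1(x) > 0$.

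There is no genuinely difficult step: the whole argument amounts to clean bookkeeping with supports of cutoffs. The one background fact I would invoke rather than reprove is the existence of smooth partitions of unity on $\R^n$ subordinate to any open cover, with supports properly contained in the cover elements---a standard consequence of paracompactness and the availability of $\ci$ bump functions.
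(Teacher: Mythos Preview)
Your argument is correct. The paper itself does not supply a proof of this statement: it is recorded as a \textbf{Fact} (the Smooth Tietze Theorem) and used as a black box in the subsequent propositions. So there is no ``paper's own proof'' to compare against; your write-up is simply a clean justification of a result the authors chose to quote.

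That said, the approach you take is exactly the standard one and is implicitly what the paper relies on: the base extension is the usual cutoff argument via a smooth partition of unity subordinate to $\{V, \R^n\setminus F\}$, and your two refinements are the natural continuity/convex-combination tricks. The only background ingredient you invoke---existence of smooth partitions of unity on $\R^n$ subordinate to an open cover---is precisely the content of \textbf{Fact \ref{char}} (essentially Lemma~1.4 of \cite{MRmodels}) combined with paracompactness, so your proof fits the paper's toolkit. Nothing is missing.
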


\begin{proposition}\label{equacional}Let  $A = \frac{\ci(\mathbb{R}^n)}{I}$ be a $\ci-$reduced $\ci-$ring, so $\sqrt[\infty]{I}=I$. Given $f,g \in \ci(\mathbb{R}^n)$, we have:

$$(q_I(f) = f+I = g+I = q_I(g)) \iff (\exists \varphi \in I)(\forall x \in Z(\varphi))(f(x)=g(x)).$$
\end{proposition}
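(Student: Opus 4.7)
The plan is to rewrite the equality modulo $I$ as a membership statement and then match it against the characterization of the $\ci$-radical given in Proposition \ref{alba} combined with Example \ref{sat-exa}.

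First, set $h := f - g \in \ci(\R^n)$. Then $q_I(f) = q_I(g)$ is equivalent to $h \in I$, and since $h(x) = 0 \iff f(x) = g(x)$, the right-hand side becomes: $(\exists \varphi \in I)(Z(\varphi) \subseteq Z(h))$. So the goal reduces to proving
\[
h \in I \iff (\exists \varphi \in I)(Z(\varphi) \subseteq Z(h)).
\]

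The direction $(\Rightarrow)$ is immediate: if $h \in I$, take $\varphi := h$, which lies in $I$ and satisfies $Z(\varphi) = Z(h)$ trivially. So I expect no real difficulty here.

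For $(\Leftarrow)$, suppose $\varphi \in I$ with $Z(\varphi) \subseteq Z(h)$. By Example \ref{sat-exa} applied to $h$, the condition $Z(\varphi) \subseteq Z(h)$ says precisely that $\varphi \in \{h\}^{\infty\text{-sat}}$. Hence $\varphi \in I \cap \{h\}^{\infty\text{-sat}}$, so this intersection is nonempty. Proposition \ref{alba} then yields $h \in \sqrt[\infty]{I}$, and the hypothesis $\sqrt[\infty]{I} = I$ ($\ci$-reducedness of $A$) gives $h \in I$, as required.

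The main (minor) obstacle is just correctly identifying that the set-theoretic inclusion $Z(\varphi) \subseteq Z(h)$ translates exactly to $\ci$-saturation membership via Example \ref{sat-exa}; once that identification is made, the rest is a direct application of the already-established machinery. No additional smoothness-extension arguments (e.g., Fact \ref{obs} or Fact \ref{char}) seem necessary at this finitely-generated stage — they will presumably enter later when handling the arbitrary-set-of-generators case.
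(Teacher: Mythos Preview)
Your proof is correct and follows essentially the same route as the paper: reduce to $h=f-g\in I$, take $\varphi=h$ for the forward direction, and for the converse translate $Z(\varphi)\subseteq Z(h)$ into the statement that $h\in\sqrt[\infty]{I}=I$. The only cosmetic difference is that the paper invokes Example~\ref{frac-exa} and the first characterization in Proposition~\ref{alba} (showing $\eta_{h}(\varphi)$ is invertible in the localization), whereas you use Example~\ref{sat-exa} and the second characterization (nonempty intersection with $\{h\}^{\infty\text{-sat}}$); these are two equivalent phrasings of the same step.
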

\begin{proof}
Suppose $q_I(f) = f+I = g+I = q_I(g)$, so $g-f \in I$. It suffices to take $\varphi = g-f$, so:
$$(\forall x \in Z(\varphi))(g(x)-f(x)=0)$$
and
$$(\forall x \in Z(\varphi))(f(x)=g(x))$$
Conversely, suppose there is some $\varphi \in I$ such that $(\forall x \in Z(\varphi))(f(x)=g(x))$. Thus, $Z(\varphi) \subseteq Z(g-f)$ and $\varphi\upharpoonright_{\mathbb{R}^n \setminus Z(g-f)} \in \ci(\mathbb{R}^n \setminus Z(g-f))^{\times} \cong \ci(\mathbb{R}^n)\{ {(g-f)}^{-1}\}^{\times}$ (see {\bf Example \ref{frac-exa}}). 
Since there is $\varphi \in I$ such that $\varphi \in \ci(\mathbb{R}^n)\{ {(g-f)}^{-1}\}^{\times}$, it follows that $g-f \in \sqrt[\infty]{I} \subseteq I$ and $f+I = g+I$.
\end{proof}







Now we characterize the invertible elements of a $\ci$-reduced $\ci$-ring.

\begin{proposition}\label{invertivel}Let $A=\frac{\ci(\mathbb{R}^n)}{I}$ be a $\ci-$reduced finitely generated $\ci-$ring, so $\sqrt[\infty]{I}=I$. Given $f \in \ci(\R^n)$ we have:
$$\left(q_I(f)= (f+I) \in \left(\frac{\ci(\R^n)}{I}\right)^{\times}\right) \iff (\exists \varphi \in I)(\forall x \in Z(\varphi))(f(x)\neq 0).$$

\end{proposition}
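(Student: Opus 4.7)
My plan is to prove the biconditional in the obvious two directions, using \textbf{Proposition \ref{equacional}} (which requires $\ci$-reducedness of $A$) as the bridge between ring-theoretic equalities in $A$ and pointwise equalities on zerosets, and \textbf{Fact \ref{obs}} (Smooth Tietze) to construct the candidate inverse.

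For the forward implication, suppose $f+I \in A^\times$. Pick $g \in \ci(\R^n)$ with $(f+I)(g+I) = 1+I$, so $\varphi := fg - 1 \in I$. On $Z(\varphi)$ we have $f(x)g(x) = 1$, hence $f(x) \neq 0$ for every $x \in Z(\varphi)$. This gives the witness $\varphi \in I$.

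For the converse, suppose $\varphi \in I$ is such that $f$ does not vanish on the closed set $Z(\varphi) \subseteq \R^n$. The function $f\upharpoonright_{Z(\varphi)}$ is smooth and nowhere zero on $Z(\varphi)$, so $1/f$ is a well-defined smooth function on $Z(\varphi)$ (since $Z(\varphi) \subseteq \R^n \setminus Z(f)$, and $\R^n \setminus Z(f)$ is an open neighborhood of $Z(\varphi)$ on which $1/f$ is $\ci$). Applying \textbf{Fact \ref{obs}} to the smooth function $1/f$ defined on the closed set $Z(\varphi)$, I obtain a $\ci$-extension $g \in \ci(\R^n)$ with $g\upharpoonright_{Z(\varphi)} = (1/f)\upharpoonright_{Z(\varphi)}$. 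Then $(fg)(x) = 1$ for every $x \in Z(\varphi)$, so by \textbf{Proposition \ref{equacional}} applied to the pair $(fg, 1)$ with witness $\varphi \in I$, we conclude $fg + I = 1 + I$, i.e.\ $(f+I)(g+I) = 1+I$, so $f+I \in A^\times$.

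I do not foresee a serious obstacle: the only subtle point is guaranteeing that $1/f$ is genuinely smooth on the closed set $Z(\varphi)$ in the sense required by Smooth Tietze, which is immediate from the fact that $\R^n \setminus Z(f)$ is an open neighborhood of $Z(\varphi)$ on which $f$ is invertible in $\ci$. Note also that finite generation ($A = \ci(\R^n)/I$ with $n$ finite) is used only insofar as \textbf{Fact \ref{obs}} and \textbf{Proposition \ref{equacional}} are stated over $\R^n$; the characterization of invertibility via $\ci$-saturation (see \textbf{Example \ref{sat-exa}} and \textbf{Proposition \ref{alba}}) gives an alternative route, but the Tietze-based argument above is more direct and keeps the proof self-contained.
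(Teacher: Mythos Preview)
Your proof is correct and follows essentially the same route as the paper's: the forward direction takes $\varphi = fg-1$ as witness, and the converse uses $\R^n \setminus Z(f)$ as the open neighborhood of $Z(\varphi)$ on which $1/f$ is smooth, then applies Smooth Tietze (\textbf{Fact~\ref{obs}}) followed by \textbf{Proposition~\ref{equacional}}. The only cosmetic difference is that the paper names an abstract open set $U$ obtained by continuity, whereas you identify it explicitly as $\R^n \setminus Z(f)$.
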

\begin{proof}Suppose, first, that $(f+I) \in \left(\frac{\ci(\R^n)}{I}\right)^{\times}$, so there is some $h + I \in \frac{\ci(\R^n)}{I}$ such that:
$$(f+I)\cdot (h + I) = 1 + I$$
$$q_I(f)\cdot q_I(h) = q_I(1)$$
$$f\cdot h - 1 \in \ker (q_I)$$
$$\varphi = f \cdot h - 1 \in I,$$
One has:
$$(\forall x \in Z(\varphi))(f(x)\cdot h(x) =  1 \neq 0)$$
and thus:
$$(\forall x \in Z(\varphi))(f(x) = \frac{1}{h(x)}  \neq 0)$$

Conversely, suppose that $f \in \ci(\R^n)$ is such that there is some $\varphi \in I$ with $(\forall x \in Z(\varphi))(f(x)\neq 0)$. Since $f$ is a continuous function, there is an open subset $U \subseteq \R^n$ such that $Z(\varphi) \subseteq U$ and  $(\forall x \in U)({f}(x)\neq 0)$.


We define:
$$\begin{array}{cccc}
g:& U \subseteq \R^n & \rightarrow & \R\\
 & x & \mapsto & \frac{1}{f(x)}
\end{array}$$


Thus  $g\upharpoonright_{Z(\varphi)}$ is smooth on $Z(\varphi)$ and by \textbf{Smooth Tietze's Theorem} ({\bf Fact \ref{obs}}), one is able to construct a $\ci$-function $\widetilde{g}: \R^n \to \R$ such that $\widetilde{g}\upharpoonright_{Z(\varphi)} = g \upharpoonright_{Z(\varphi)}$.  

Since we have:
$$(\forall x \in Z(\varphi))(f(x)\cdot \widetilde{g}(x) - 1= 0)$$
it follows, by \textbf{Proposition \ref{equacional}}, that $f \cdot \widetilde{g}-1 \in I$, so:
$$(f+I)\cdot (\widetilde{g}+I) = 1+ I$$
and $f+I \in \left( \frac{\ci(\R^n)}{I} \right)^{\times}$.
\end{proof}

Combining the previous proposition with  {\bf Proposition \ref{equacional}}, we obtain the following:

\begin{corollary} \label{square-cor}
Let $A=\frac{\ci(\mathbb{R}^n)}{I}$ be a  finitely generated $\ci-$reduced $\ci-$ring, so $\sqrt[\infty]{I}=I$. Given $f \in \ci(\R^n)$,  are equivalent:

\begin{enumerate}
    \item $(\exists u \in \ci(\R^n))( ((f+I) = (u^2 +I)) \& (u+I \in \left(\frac{\ci(\R^n)}{I}\right)^{\times}))$
    \item $ (\exists u \in \ci(\R^n))( \exists \psi \in I) (\forall x \in Z(\psi))(f(x) = u^2(x) \neq 0)$
    \item  $(\exists \psi \in I)(\forall x \in Z(\psi))(f(x) > 0)$
\end{enumerate}

\end{corollary}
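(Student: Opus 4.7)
The plan is to run the equivalences in the pattern $(1)\Leftrightarrow (2)$ and $(2)\Leftrightarrow (3)$, using \textbf{Proposition \ref{equacional}} and \textbf{Proposition \ref{invertivel}} for the first equivalence and the \textbf{Smooth Tietze Theorem} (\textbf{Fact \ref{obs}}) for the nontrivial direction of the second.

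For $(1)\Rightarrow (2)$, I would unpack the two conditions in (1) via the two characterizations above: from $f+I=u^{2}+I$, \textbf{Proposition \ref{equacional}} yields some $\varphi_{1}\in I$ with $f(x)=u(x)^{2}$ for every $x\in Z(\varphi_{1})$; from $u+I\in A^{\times}$, \textbf{Proposition \ref{invertivel}} yields some $\varphi_{2}\in I$ with $u(x)\neq 0$ for every $x\in Z(\varphi_{2})$. The key technical step is to merge the two zerosets inside $I$. I would set $\psi:=\varphi_{1}^{2}+\varphi_{2}^{2}\in I$ (using that $I$ is an ideal): since both summands are non-negative, $Z(\psi)=Z(\varphi_{1})\cap Z(\varphi_{2})$, and on this common zeroset one has $f(x)=u^{2}(x)\neq 0$, which is (2). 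The converse $(2)\Rightarrow (1)$ is immediate by reading \textbf{Proposition \ref{equacional}} and \textbf{Proposition \ref{invertivel}} backwards from the same witness $\psi$.

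The implication $(2)\Rightarrow (3)$ is trivial: if $u^{2}(x)\neq 0$ then $u^{2}(x)>0$, hence $f(x)>0$ on $Z(\psi)$. The remaining direction $(3)\Rightarrow(2)$ is where the real content sits, and I expect it to be the main obstacle. Given $\psi\in I$ with $f>0$ on $Z(\psi)$, I would argue as follows. Since $f$ is continuous and $Z(\psi)$ is closed, there is an open neighborhood $U\supseteq Z(\psi)$ on which $f>0$, so the function $g:=\sqrt{f\upharpoonright_{U}}$ is well-defined and smooth on $U$, hence in particular $g\upharpoonright_{Z(\psi)}$ is smooth in the sense recalled before \textbf{Fact \ref{obs}}. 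Because $g$ is strictly positive on the closed set $Z(\psi)$, the strengthened form of \textbf{Smooth Tietze} (second bullet of \textbf{Fact \ref{obs}}) produces an extension $u\in\ci(\R^{n})$ with $u\upharpoonright_{Z(\psi)}=g$ and $u(x)>0$ for all $x\in\R^{n}$. Then on $Z(\psi)$ we have $u^{2}(x)=g(x)^{2}=f(x)>0$, in particular $u^{2}(x)\neq 0$, establishing (2) with the same $\psi$.

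The only delicate point is ensuring that the square root $\sqrt{f}$, defined only where $f$ is positive, genuinely admits a smooth extension to all of $\R^{n}$; this is exactly the scenario covered by the positive-extension clause of \textbf{Fact \ref{obs}}, so once the open neighborhood $U$ is produced by continuity the argument closes cleanly.
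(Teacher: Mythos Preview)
Your proof is correct and follows essentially the same route as the paper. The paper states the corollary as an immediate combination of \textbf{Proposition \ref{equacional}} and \textbf{Proposition \ref{invertivel}}, and then supplies the detailed argument (including the Tietze step for the nontrivial implication) in the proof of \textbf{Proposition \ref{mp}}; your write-up matches that argument. The only cosmetic difference is that for $(3)\Rightarrow(2)$ the paper first extends $f\upharpoonright_{Z(\psi)}$ to a globally positive $\widetilde{f}\in\ci(\R^n)$ via \textbf{Fact \ref{obs}} and then sets $u:=\sqrt{\widetilde{f}}$, whereas you first take $\sqrt{f}$ on the open set where $f>0$ and then extend; both orderings are valid.
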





\subsection{The general case}


We know that every closed subset of $\R^n$ is the zeroset of some smooth function $f: \R^n \to \R$ (see {\bf Fact \ref{char}}). We now expand the notion of zeroset for $\R^E$, where $E$ is not  necessarily a finite set.

\begin{definition}
Let $E$ be \underline{any} set. Consider $\R^E = \{ v : E \to \R | v\, \text{is a function}\}$ and denote ${\cal F}(\R^E) : = \{ f : \R^E \to \R | f$ is a function$\}$.

For every  $D \subseteq E$, we 
have the canonical projection:
$$\begin{array}{cccc}
\pi_{ED}: & \R^E & \rightarrow & \R^D\\
      & v & \mapsto & v\upharpoonright_D: D \to \R
\end{array}$$
and this induces a function:
$$\begin{array}{cccc}
\mu_{DE}: &  {\cal F}(\R^D) & \rightarrow & {\cal{F}}(\R^E)\\
      & f & \mapsto & \mu_{DE}(f) := f \circ \pi_{ED} 
\end{array}$$





\end{definition}

\begin{definition}Let $E$ be any set. Define:

$$\ci(\R^E) := \{ f \in {\cal F}(\R^E) \mid (\exists E' \subseteq_{\rm fin} E)(\exists {f'} \in \ci(\R^{E'}))(f = {f'} \circ \pi_{EE'})\}.$$


\end{definition}

It is not hard to see that for every $E', E'' \subseteq_{\rm fin} E$ with $E' \subseteq E''$,  the following diagram commutes:

$$\xymatrix{
 & \ci(\R^{E}) & \\
\ci(\R^{E'}) \ar[ur]^{\mu_{E'E}} \ar[rr]_{\mu_{E'E''}}& & \ci(\R^{E''}) \ar[ul]_{\mu_{E''E}}
}$$

Moreover, notice that $\ci(\R^E) \cong \varinjlim_{E' \subseteq_f E} \ci(\R^{E'})$, where for every $E', E'' \subseteq_{\rm fin} E$ such that $E' \subseteq E''$, the following triangle commutes:

$$\xymatrix{
 & \varinjlim_{E' \subseteq_{fin} E} \ci(\R^{E'}) & \\
\ci(\R^{E'}) \ar[ur]^{\ell_{E'}} \ar[rr]_{{\mu}_{E'E''}}& & \ci(\R^{E''}) \ar[ul]_{\ell_{E''}}
},$$

\noindent where the morphisms indicated above are defined as in \textbf{Section 3} of \cite{BM1}.

Thus, by a smooth function on $\R^E$ we mean a function $f: \R^E \to \R$ that factors through some projection $\pi_{EE'}: \R^{E} \to \R^{E'}$ and a smooth function $\widetilde{f} \in \ci(\R^{E'})$, for some $E' \subseteq_{fin} E$. I.e., given $f \in {\cal{F}}(\R^E)$ we have:

$$f \in \ci(\R^E)  \iff (\exists E' \subseteq_{\rm fin} E)(\exists \widetilde{f} \in \ci(\R^{E'})(f = \widetilde{f} \circ \pi_{EE'}).$$

\begin{definition}
Let $E$ be \underline{any} set. A subset $X \subseteq \R^E$ is a \textbf{zeroset} if there is some $\varphi \in \ci(\R^E)$ such that $X = Z(\varphi)$, 
 where 
 $$Z(\varphi) :=  \varphi^{\dashv}[\{0\}] =  \{\vec{x} \in \R^{E} : \varphi(\vec{x}) = 0 \}$$

The set $\mathcal{Z}_E := \{Z(\varphi) \in \wp(\R^E):  \varphi \in \ci(\R^{E}) \}$ denotes  the set  of all zerosets in $\R^E$.

\end{definition}

\begin{remark}

$\bullet$ Let $E$ be an arbitrary set and  $ \varphi \in \ci(\R^E)$. Select $E' \subseteq_{fin} E$ and $\varphi' \in \ci(\R^{E'})$ such that $\varphi = \varphi'\circ \pi_{EE'}$. Then $Z(\varphi) = \pi_{EE'}^{\dashv}[Z(\varphi')]$.

$\bullet$ If $E$ is a {\em finite} set, then by {\bf Fact \ref{char}}, $\mathcal{Z}_E  = {\rm Closed}\,(\R^E) \subseteq \wp(\R^E)$ thus it is stable under {\em finite} reunions  and {\em arbitrary} intersections; in particular, $\emptyset = \bigcup \emptyset$ and $\R^E = \bigcap \emptyset$ are in $\mathcal{Z}_E$.

$\bullet$ In general, for an {\em arbitrary} set $E$, the subset $\mathcal{Z}_E  \subseteq \wp(\R^E)$  is stable just under {\em finite} reunions and {\em finite} intersections. 

\end{remark}

\begin{definition}

If $I \subseteq \ci(\R^E)$ is an ideal, then $I' = \mu_{E'E}^{\dashv}[I]$ is an ideal of $\ci(\R^{E'})$. We define:

$$\widehat{I} = \{ F \subseteq \R^E \mid (\exists E' \subseteq_{\rm fin} E)(\exists f \in I'= \mu_{E'E}^{\dashv}[I])(F = \pi_{EE'}^{\dashv}[Z(f)])\}$$
\end{definition}

\begin{proposition}\label{equacional2}Let  $A = \frac{\ci(\mathbb{R}^E)}{I}$ be a $\ci-$reduced $\ci-$ring, so $\sqrt[\infty]{I}=I$. Given $f,g \in \ci(\mathbb{R}^E)$, we have:

$$(q_I(f) = f+I = g+I = q_I(g)) \iff (\exists \varphi \in I)(\forall x \in Z(\varphi))(f(x)=g(x)).$$
\end{proposition}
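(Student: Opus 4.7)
The forward direction is immediate: if $q_I(f)=q_I(g)$ then $g-f \in I$, and taking $\varphi := g-f$ makes $f(x)=g(x)$ on $Z(\varphi)$ trivially. The real content is the converse, and my plan is to reduce it to the finitely generated case handled by \textbf{Proposition \ref{equacional}}.

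Given $\varphi \in I$ with $f\equiv g$ on $Z(\varphi)$, the three functions $f, g, \varphi \in \ci(\R^E)$ each factor through some finite subset of $E$. By taking the union of these (still finite) subsets, I choose a single $E' \subseteq_{\rm fin} E$ together with $f', g', \varphi' \in \ci(\R^{E'})$ such that $f = f'\circ \pi_{EE'}$, $g = g'\circ \pi_{EE'}$ and $\varphi = \varphi'\circ \pi_{EE'}$, i.e.\ $f = \mu_{E'E}(f')$, $g=\mu_{E'E}(g')$, $\varphi=\mu_{E'E}(\varphi')$. Setting $I' := \mu_{E'E}^{\dashv}[I] \subseteq \ci(\R^{E'})$, we have $\varphi' \in I'$ by construction.

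I next need to verify two things at the finite level. First, $I'$ is a $\ci$-radical ideal of $\ci(\R^{E'})$: this follows directly from \textbf{Proposition \ref{egito}(d)} applied to the $\ci$-homomorphism $\mu_{E'E}$ and the $\ci$-radical ideal $I$. So the finite quotient $\ci(\R^{E'})/I'$ is $\ci$-reduced. Second, the zeroset condition transfers: since $Z(\varphi) = \pi_{EE'}^{\dashv}[Z(\varphi')]$ and $\pi_{EE'}:\R^E \to \R^{E'}$ is surjective, every $y \in Z(\varphi')$ is the image of some $\vec{x} \in Z(\varphi)$ (just extend $y$ arbitrarily, e.g.\ by $0$, on $E\setminus E'$). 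Then $f'(y) = f(\vec{x}) = g(\vec{x}) = g'(y)$, so $f'$ and $g'$ agree on $Z(\varphi')$.

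Now \textbf{Proposition \ref{equacional}} applies to $\ci(\R^{E'})/I'$ and the witness $\varphi' \in I'$, yielding $g'-f' \in I'$. Applying $\mu_{E'E}$ gives $g-f = \mu_{E'E}(g'-f') \in \mu_{E'E}[I'] \subseteq I$, so $q_I(f)=q_I(g)$, as required. The only subtle point in the plan is checking that the ``$\ci$-radical'' property of $I$ descends to $I'$ via preimage --- without this one cannot invoke the finitely generated version --- but \textbf{Proposition \ref{egito}(d)} delivers exactly that, so the reduction is clean.
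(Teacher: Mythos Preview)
Your proof is correct and follows essentially the same route as the paper's: reduce the converse to the finitely generated case (\textbf{Proposition \ref{equacional}}) by passing to a single finite $E'$ carrying $f,g,\varphi$, and use \textbf{Proposition \ref{egito}(d)} to ensure $I'=\mu_{E'E}^{\dashv}[I]$ is $\ci$-radical. Your forward direction is in fact cleaner than the paper's, which unnecessarily routes through the finite case again rather than simply taking $\varphi=g-f$ as you do (and as the paper itself does in \textbf{Proposition \ref{equacional}}); your explicit use of the surjectivity of $\pi_{EE'}$ to transfer the zeroset condition is also a point the paper leaves implicit.
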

\begin{proof}
Given $f,g \in \ci(\R^E)$ such that $q_I(f) = f+I = g+I = q_I(g)$, by definition there are finite subsets $E_f, E_g \subseteq E$ and $\widehat{f} \in \ci(\R^{E_f}), \widehat{g} \in \ci(\R^{E_g})$ such that $f= \mu_{E_f}(\widehat{f}) = \widehat{f} \circ \pi_{E_f} \in \ci(\R^{E_f})$ and $g= \mu_{E_g}(\widehat{g}) = \widehat{g} \circ \pi_{E_g} \in \ci(\R^{E_g})$. Then $E_f \cup E_g \subseteq_{fin} E$. Let $\widetilde{f} = \mu_{E_f, E_f \cup E_g}(\widehat{f}) \in \ci(\R^{E_f \cup E_g})$ and  $\widetilde{g} = \mu_{E_g, E_f \cup E_g}(\widehat{g})  \in \ci(\R^{E_f \cup E_g})$. By hypothesis, $f+I = g+I$, so $f-g \in I$ and $\mu_{E_f \cup E_g, E}(\widetilde{f})-\mu_{E_f \cup E_g, E}(\widetilde{g}) \in I$. We have, thus,  $(\widetilde{f}-\widetilde{g}) \in \mu_{E_f \cup E_g, E}^{\dashv}[I] = \sqrt[\infty]{\mu_{E_f \cup E_g}^{\dashv}[I]}$, since $I$ is a $\ci-$radical ideal (see {\bf Proposition \ref{egito}.(d)}). By the finitely generated case  ({\bf Proposition \ref{equacional}}), since $\widetilde{f}, \widetilde{g} \in \ci(\R^{E_f \cup E_g})$ and $\widetilde{f}+ \mu_{E_f \cup E_g}^{\dashv}[I] = \widetilde{g}+\mu_{E_f \cup E_g}^{\dashv}[I]$, it follows that there is some $\widetilde{\varphi} \in \mu_{E_f \cup E_g}^{\dashv}[I]$ such that:
$$(\forall y \in Z(\widetilde{\varphi}))(\widetilde{f}(y)=\widetilde{g}(y))$$
Taking $\varphi = \mu_{E_f \cup E_g,E}(\widetilde{\varphi}) = \widetilde{\varphi} \circ \pi_{E,E_f \cup E_g} \in I$, we have:
$$(\forall x \in Z(\varphi))(f(x)=\widetilde{f}\circ \pi_{E_f \cup E_g}(x) = \widetilde{g}\circ \pi_{E_f \cup E_g}(x) = g(x))$$

On the other hand, suppose $f,g \in \ci(\R^E)$ are such that $(\exists \varphi \in I)(\forall x \in Z(\varphi))(f(x)=g(x))$. Thus, for such $\varphi$ there is a finite $E_{\varphi} \subseteq E$ and $\widehat{\varphi} \in \ci(\R^{E_{\varphi}})$ such that $\varphi = \widehat{\varphi} \circ \pi_{E,E_{\varphi}}$, and there are also some finite $E_f, E_g \subseteq E$ and some $\widehat{f} \in \ci(\R^{E_f}), \widehat{g} \in \ci(\R^{E_g})$ such that $f = \mu_{E_f, E}(\widehat{f})$ and $g=\mu_{E_g,E}(\widehat{g})$. Let $\widetilde{\varphi} = \mu_{E_{\varphi}, E_{\varphi} \cup E_f \cup E_g}(\widehat{\varphi}), \widetilde{f} = \mu_{E_f, E_{\varphi} \cup E_f \cup E_g}(\widehat{f})$ and $\widetilde{g} = \mu_{E_g, E_{\varphi} \cup E_f \cup E_g}(\widehat{g})$. By the finitely generated case ({\bf Proposition \ref{equacional}}), since $\widetilde{f}, \widetilde{g}, \widetilde{\varphi} \in \ci(\R^{E_{\varphi} \cup E_f \cup E_g})$, $(\forall x \in Z(\widetilde{\varphi}))(\widetilde{f}(x)=\widetilde{g}(x))$ and $\sqrt[\infty]{\mu_{E_{\varphi} \cup E_f \cup E_g,E}^{\dashv}[I]} = \mu_{E_{\varphi} \cup E_f \cup E_g,E}^{\dashv}[I]$, it follows that $\widetilde{f}-\widetilde{g} \in \mu_{E_{\varphi} \cup E_f \cup E_g, E}^{\dashv}[I]$, so $f-g = \mu_{E_{\varphi} \cup E_f \cup E_g,E}(\widetilde{f}-\widetilde{g})  \in I$, and $f+I = g+I$.
\end{proof}

\begin{proposition}\label{invertivel2} Let $E$ be any set and $I \subseteq \ci(\R^E)$ be a $\ci$-radical ideal. We have, for every $ f \in \ci(\R^E)$:
$$(f+I \in \left( \dfrac{\ci(\R^E)}{I}\right)^{\times} )\iff (\exists \varphi \in I)(\forall x \in Z(\varphi))(f(x)\neq 0)$$
\end{proposition}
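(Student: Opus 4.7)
The plan is to follow closely the strategy used for Proposition \ref{equacional2}: reduce the problem to the finitely generated case already settled in Proposition \ref{invertivel}, invoking Proposition \ref{egito}.(d) to ensure that the relevant preimage ideal is again $\ci$-radical.

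The forward direction is short and purely algebraic. If $f+I$ is invertible in $\ci(\R^E)/I$, then there exists $h \in \ci(\R^E)$ with $fh - 1 \in I$. Setting $\varphi := fh - 1$, one has $\varphi \in I$, and for any $x \in Z(\varphi)$ the identity $f(x)h(x) = 1$ forces $f(x) \neq 0$.

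For the converse, I would fix a witness $\varphi \in I$ with $f$ non-vanishing on $Z(\varphi)$ and then choose finite subsets $E_f, E_\varphi \subseteq_{\rm fin} E$ together with factorizations $f = \widehat{f}\circ \pi_{E,E_f}$, $\varphi = \widehat{\varphi}\circ\pi_{E,E_\varphi}$, where $\widehat{f}\in\ci(\R^{E_f})$ and $\widehat{\varphi}\in \ci(\R^{E_\varphi})$. Set $E' := E_f \cup E_\varphi$ and lift both to $\widetilde{f}, \widetilde{\varphi} \in \ci(\R^{E'})$ via the appropriate $\mu$-maps. Using $Z(\varphi) = \pi_{E,E'}^{\dashv}[Z(\widetilde{\varphi})]$ together with $f = \widetilde{f}\circ\pi_{E,E'}$ and the (obvious) surjectivity of $\pi_{E,E'}$, one checks that $\widetilde{f}$ does not vanish on $Z(\widetilde{\varphi})$: given $y\in Z(\widetilde{\varphi})$, any $x\in \R^E$ with $\pi_{E,E'}(x)=y$ lies in $Z(\varphi)$, whence $\widetilde{f}(y)=f(x)\neq 0$.

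Now let $I' := \mu_{E',E}^{\dashv}[I]$. By Proposition \ref{egito}.(d) this is a $\ci$-radical ideal of $\ci(\R^{E'})$, and $\widetilde{\varphi}\in I'$. Applying the finitely generated case, Proposition \ref{invertivel}, to $\widetilde{f}$ and the $\ci$-reduced finitely generated ring $\ci(\R^{E'})/I'$ produces $\widetilde{h}\in \ci(\R^{E'})$ with $\widetilde{f}\widetilde{h}-1 \in I'$. Pushing this forward via the $\ci$-homomorphism $\mu_{E',E}$ yields $f \cdot \mu_{E',E}(\widetilde{h}) - 1 \in I$, proving that $f+I$ is invertible in $\ci(\R^E)/I$. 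The only subtle point -- essentially a bookkeeping matter rather than a genuine difficulty -- is to verify that the chosen finite-dimensional witnesses correctly encode the non-vanishing condition under the projection $\pi_{E,E'}$; this is where one uses the explicit description of $Z(\varphi)$ as a cylinder over $Z(\widetilde{\varphi})$.
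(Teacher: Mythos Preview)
Your proposal is correct and follows essentially the same route as the paper: both directions reduce to the finitely generated case (Proposition~\ref{invertivel}) via a choice of finite $E' \subseteq E$, using Proposition~\ref{egito}.(d) to guarantee that $I' = \mu_{E',E}^{\dashv}[I]$ is $\ci$-radical. Your forward direction is in fact slightly more streamlined than the paper's, since you argue directly from $\varphi = fh-1 \in I$ without first passing to a finite-dimensional representative.
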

\begin{proof}
Given $f \in \ci(\R^E)$ such that $q_I(f) = f+I$ is invertible, let $h, \varphi \in \ci(\R^E)$ be such that 
$$(f \cdot h- 1) = \varphi \in I.$$
As in the proof of previous proposition, we can select $E' \subseteq_{fin} E$ and $f', h', \varphi' \in \ci(\R^{E'})$ such that $f = \mu_{E'E}(f'), h = \mu_{E'E}(h'), \varphi = \mu_{E'E}(\varphi')$. Then 
$$(f'\cdot h' -'1) = \varphi' \in I' := \mu_{E'E}^{\dashv}[I].$$ 
Thus 
$$(\forall x' \in \R^{E'})( x' \in Z(\varphi') \Rightarrow f'(x') \neq 0 )$$
Since $Z(\varphi) = \pi_{EE'}^{\dashv}[Z(\varphi')]$ and $f = f' \circ \pi_{EE'},$ then
$$(\forall x \in \R^{E})( x \in Z(\varphi) \Rightarrow f(x) \neq 0 )$$

Conversely,  let $f, \varphi \in \ci(\R^E)$ such that $\varphi \in I$ and  
$$(\forall x \in \R^{E})( x \in Z(\varphi) \to f(x) \neq 0 ).$$
Select $E' \subseteq_{fin} E$ and $f', \varphi' \in \ci(\R^{E'})$ such that $f = \mu_{E'E}(f'), \varphi = \mu_{E'E}(\varphi')$. 

Then $I' := \mu_{E'E}^{\dashv}[I]$ is a $\ci$-radical ideal of $\ci(\R^{E'})$, $\varphi' \in I'$ and  
$$(\forall x' \in \R^{E'})( x' \in Z(\varphi') \to f'(x') \neq 0 ).$$ 

By the finitely generated case  ({\bf Proposition \ref{invertivel}}),  $f'+I' \in (\ci(\R^{E'})/I')^\times$. Let $h' \in \ci(\R^{E'})$ such that 
$$(f'+I')(h'+I') = 1+I' \in \ci(\R^{E'})/I'.$$ 

Now define $h := \mu_{EE'}(h')$. Then
$$ (f+I)(h+I) = 1+I \in \ci(\R^{E})/I $$

\end{proof}

\begin{proposition}Let $E$ be any set. If $I \subseteq \ci(\R^E)$ is an ideal, then:
$$\widehat{I} := \{ X \in \wp(\R^E)   \mid (\exists f \in I)(X = Z(f))\} \subseteq \wp(Z(\R^{E}))$$
is a filter of zerosets in $\R^E$.
\end{proposition}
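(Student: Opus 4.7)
The plan is to verify the three defining properties of a filter of zerosets: (i) $\R^E \in \widehat{I}$, (ii) closure under binary intersections, and (iii) upward closure in $\mathcal{Z}_E$. All three will follow from the ideal axioms on $I$ combined with the elementary identities
\[
Z(0) = \R^E, \quad Z(f^2 + g^2) = Z(f) \cap Z(g), \quad Z(f \cdot g) = Z(f) \cup Z(g),
\]
which hold pointwise over $\R^E$ (the second identity uses that a sum of squares of reals vanishes iff each summand does).

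For (i), since $0 \in I$ and $Z(0) = \R^E$, we have $\R^E \in \widehat{I}$. For (ii), given $X = Z(f)$ and $Y = Z(g)$ with $f, g \in I$, the element $h := f^2 + g^2$ lies in $I$ (as $I$ is closed under addition and under multiplication by elements of $\ci(\R^E)$, in particular by $f$ and $g$ themselves), and the middle identity above gives $Z(h) = X \cap Y$, so $X \cap Y \in \widehat{I}$.

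The only step that requires a moment's thought is (iii). Suppose $Z(f) \in \widehat{I}$ with $f \in I$, and let $Y \in \mathcal{Z}_E$ with $Z(f) \subseteq Y$; pick $g \in \ci(\R^E)$ with $Y = Z(g)$. Set $h := f \cdot g$. Then $h \in I$ because $I$ is an ideal and $f \in I$, and the third identity yields $Z(h) = Z(f) \cup Z(g) = Z(g) = Y$ (the middle equality using $Z(f) \subseteq Z(g)$). Hence $Y \in \widehat{I}$.

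I do not foresee a real obstacle here: the argument is purely formal manipulation with ideals and zerosets. The one mild subtlety is checking that the auxiliary functions $f^2 + g^2$ and $f \cdot g$ are genuinely elements of $\ci(\R^E)$ in the sense defined for arbitrary index sets $E$; but this is automatic because $\ci(\R^E)$ is itself a $\ci$-ring (the directed colimit of the $\ci(\R^{E'})$ for $E' \subseteq_{\mathrm{fin}} E$), and hence is closed under the ring operations. No properness of $I$ is required for the statement as written; if $I = \ci(\R^E)$, then $1 \in I$ gives $\emptyset = Z(1) \in \widehat{I}$, and $\widehat{I}$ is simply the improper filter $\mathcal{Z}_E$ itself.
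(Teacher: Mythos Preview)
Your proof is correct and uses the same core identities as the paper ($Z(0)=\R^E$, $Z(f^2+g^2)=Z(f)\cap Z(g)$, $Z(f\cdot g)=Z(f)\cup Z(g)$), but it is more direct: the paper reduces each step to a finite $E'\subseteq_{\rm fin} E$, and in the upward-closure step even invokes Fact~\ref{char} to produce an auxiliary characteristic function $\chi_{H'}$ with $Z(\chi_{H'})=H'$, whereas you simply use the given $g$ with $Z(g)=Y$ and multiply. Your observation that $\ci(\R^E)$ is already a $\ci$-ring, so the auxiliary elements $f^2+g^2$ and $f\cdot g$ live there automatically, is exactly what lets you bypass the finite-level bookkeeping.
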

\begin{proof}
Note first that
$$\widehat{I} = \{ F \subseteq \R^E \mid (\exists E' \subseteq_{\rm fin} E)(\exists f' \in I'= \mu_{E'}^{\dashv}[I])(F = \pi_{EE'}^{\dashv}[Z(f)])\} \subseteq \wp({\cal Z}(\R^{E}))$$

It is easy to see that $\R^E$ is a zeroset. In fact, $\R^E = Z(0_E)$, where
$$\begin{array}{cccc}
0_E: & \R^E & \rightarrow & \R \\
   & x & \mapsto & 0
\end{array}.$$

Note that $0_E \in \ci(\R^E)$: choose any finite $D \subseteq E$ and consider the $\ci$-function

$$\begin{array}{cccc}
0_D: & \R^D & \rightarrow & \R \\
   & x & \mapsto & 0
\end{array},$$

\noindent so $0_E = \mu_{DE}(0_D) \in \ci(\R^E)$.

Given $G_1,G_2 \in \widehat{I} \subseteq \R^{E}$, let $g_1, g_2 \in I \subseteq \ci(\R ^E)$  such that $G_i = Z(g_i)$ for $i=1,2$.  There are  finite $E', E'' \subseteq E$, $f_1 \in \mu_{E'E}^{\dashv}[I] \subseteq \ci(\R^{E'}), f_2 \in \mu_{E''E}^{\dashv}[I] \subseteq \ci(\R^{E''})$  such that $g_1 = f_1 \circ \pi_{EE'}, g_2 = f_2 \circ \pi_{EE''}$. Thus $\pi_{EE'}^{\dashv}[Z(f_1)] = G_1$ and $\pi_{EE''}^{\dashv}[Z(f_2)] = G_2$, where $\pi_{EE'}: \R^{E} \to \R^{E'}$ and $\pi_{EE''}: \R^{E} \to \R^{E''}$ are the canonical projections (restrictions). Consider:

$$\xymatrix{
 & \ar[dl]_{\pi_{E'\cup E'', E'}} \R^{E' \cup E''}  \ar[dr]^{\pi_{E'\cup E'', E''}} &  \\
 \R^{E'} & & \R^{E''}}$$
 
 \noindent where:

 $$\begin{array}{cccc}
    \pi_{E'\cup E'', E'}: & \R^{E' \cup E''} & \rightarrow & \R^{E'}\\
          & v & \mapsto & v\upharpoonright_{E'}: E' \to \R
 \end{array}$$
 and
 
 $$\begin{array}{cccc}
    \pi_{E'\cup E'', E''}: & \R^{E' \cup E''} & \rightarrow & \R^{E''}\\
          & v & \mapsto & v\upharpoonright_{E''}: E'' \to \R
 \end{array}$$

We have the commutative diagram:

$$\xymatrixcolsep{5pc}\xymatrix{
  & \ar@/_2pc/[ddl]_{\pi_{EE'}}\R^E \ar[d]^{\pi_{E, E'\cup E''}} \ar@/^2pc/[ddr]^{\pi_{EE''}}& \\
 & \ar[dl]_{\pi_{E'\cup E'', E'}}  \R^{E' \cup E''} \ar[dr]^{\pi_{E'\cup E'', E''}} &  \\
 \R^{E'} & & \R^{E''}}$$

Define $\widetilde{f_1} = f_1 \circ \pi_{E'\cup E'', E'}: \R^{E' \cup E''} \to \R$ and $\widetilde{f_2} = f_2 \circ \pi_{E'\cup E'', E''}: \R^{E' \cup E''} \to \R$, so $\widetilde{F_1} = \pi_{E'\cup E'', E'} ^{\dashv}[Z(f_1)] = \widetilde{f_1}^{\dashv}[\{ 0\}] = Z(\widetilde{f_1}) \subseteq \R^{E'\cup E''}$ and $\widetilde{F_2} = \pi_{E'\cup E'', E''}^{\dashv}[Z(f_2)] = \widetilde{f_2}^{\dashv}[\{ 0\}] = Z(\widetilde{f_2}) \subseteq \R^{E' \cup E''}$ are zerosets.

Note that $\widetilde{F_1}\cap \widetilde{F_2}$ is also a zeroset, namely $\widetilde{F_1}\cap \widetilde{F_2} = Z(\widetilde{f_1}^2 + \widetilde{f_2}^2)$, with $\widetilde{f_1}^2 + \widetilde{f_2}^2 \in \mu_{E' \cup E'',E}^{\dashv}[I]$. In fact, we have the commutative diagram:

$$\xymatrix{
   & \ci(\R^E) &    \\
   & \ci(\R^{E'\cup E''}) \ar[u]^{\mu_{E'\cup E'',E}} & \\
 \ci(\R^{E'}) \ar@/^2pc/[uur]^{\mu_{E'E}} \ar[ur]_{\mu_{E',E'\cup E''}} & & \ci(\R^{E''}) \ar@/_2pc/[uul]_{\mu_{E''E}} \ar[ul]^{\mu_{E'', E'\cup E''}}
}$$

Since the diagram commutes, we have $\mu_{E',E'\cup E''}(f_1) = f_1 \circ \pi_{E'\cup E'', E'} = \widetilde{f_1} \in \mu_{E'\cup E''}^{\dashv}[I]$ and $\mu_{E'',E'\cup E''}(f_2) = f_2 \circ \pi_{E'\cup E'', E''} = \widetilde{f_2} \in \mu_{E'\cup E'',E}^{\dashv}[I]$, so $\widetilde{f_1}^2 + \widetilde{f_2}^2 \in \mu_{E'\cup E'',E}^{\dashv}[I]$.\\


Then $g_1^2 + g_2^2 \in I$ and 

$$\pi_{E, E' \cup E''}^{\dashv}[\widetilde{F_1}\cap \widetilde{F_2}] = \pi_{E, E'\cup E''}^{\dashv}[Z(\widetilde{f_1}^2 + \widetilde{f_2}^2)] = Z(g_1^2 + g_2^2) = G_1 \cap G_2.$$

Let $G \in \widehat{I}$ and $H \in {\cal Z}(\R^E)$ be such that $G \subseteq H$. 
Then there are $g \in I, h \in \ci(\R^E)$ such that $G= Z(g), H = Z(h) \in {\cal Z}(\R^E)$. Now select $E' \subseteq_{fin} E$ and $g' , h' \in \ci(\R^{E'})$ such that $\mu_{E'E}(g') = g, \mu_{E'E}(h') = h$; thus $\pi_{EE'}^\dashv[Z(g')] \subseteq \pi_{EE'}^\dashv[Z(h')]$ and  $g' \in I' := \mu_{E'E}^\dashv[I]$.
Let $G'= Z(g'), H' = Z(h') \in {\cal Z}(\R^{E'})$, then $G' \subseteq H'$.
Since we are dealing with $\ci(\R^{E'})$ with $E'$ finite, Whitney's theorem ({\bf Fact \ref{obs}}) gives us a smooth function, 
$\chi_{{H'}} \in \ci(\R^{E'})$ such that ${H'}=Z(\chi_{{H'}})$. We have $Z(h') = H' = G' \cap H' = Z(g') \cap Z(\chi_{H'}) = Z(g'. \chi_{H'})$ and, since $I' = \mu_{E'E}^\dashv[I]$ is an ideal, $g' . \chi_{H'} \in I'$. 

Since
$H = \pi_{EE'}^\dashv[H']$, $H' = Z(g'. \chi_{H'})$, $g'. \chi_{H'} \in I' $ and
$$\widehat{I} = \{ F \subseteq \R^E \mid (\exists E' \subseteq_{\rm fin} E)(\exists f' \in I'= \mu_{E'}^{\dashv}[I])(F = \pi_{EE'}^{\dashv}[Z(f)])\} \subseteq \wp({\cal Z}(\R^{E})),$$
we have $H \in \widehat{I}$.


\end{proof}










\begin{proposition}Let $E$ be any set. If $\Phi  \subseteq \wp({\cal Z}(\R^E))$ is a filter of zerosets in $\R^E$, then:
$$ \widecheck{\Phi} := \{ f \in \ci(\R^E) \mid Z(f) \in \Phi \} \subseteq \ci(\R^E)$$
\noindent is an ideal of $\ci(\R^E)$.
\end{proposition}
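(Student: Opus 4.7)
The plan is to verify the three defining properties of an ideal directly, relying on two elementary facts about zerosets of smooth functions: first, that the intersection of two zerosets is again a zeroset via the identity $Z(f) \cap Z(g) = Z(f^2 + g^2)$, and second, that $Z(f) \subseteq Z(h \cdot f)$ for every $h \in \ci(\R^E)$. Together with the filter axioms (containing the whole space, closure under finite intersections, and upward closure within $\mathcal{Z}(\R^E)$), these facts handle everything.

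First I would check that $0 \in \widecheck{\Phi}$. The zero function $0_E \in \ci(\R^E)$ (already noted in the preceding proof, $0_E = \mu_{DE}(0_D)$ for any finite $D \subseteq E$), and $Z(0_E) = \R^E$, which lies in $\Phi$ since every filter contains the top element.

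Next I would show closure under addition. Given $f, g \in \widecheck{\Phi}$, we have $Z(f), Z(g) \in \Phi$. Since $\Phi$ is a filter of zerosets, $Z(f) \cap Z(g) \in \Phi$. Writing this intersection as $Z(f^2 + g^2)$ confirms that it is indeed a zeroset, but in any case, what I need is the containment
\[
Z(f) \cap Z(g) \subseteq Z(f+g),
\]
combined with the fact that $Z(f+g) \in \mathcal{Z}(\R^E)$. The upward-closure property of the filter (within zerosets) then gives $Z(f+g) \in \Phi$, i.e., $f+g \in \widecheck{\Phi}$.

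Finally, for absorption, let $f \in \widecheck{\Phi}$ and $h \in \ci(\R^E)$. Since $f(x)=0$ forces $(h \cdot f)(x)=0$, we have $Z(f) \subseteq Z(h \cdot f)$, and $Z(h \cdot f)$ is a zeroset by definition. Upward closure of $\Phi$ inside $\mathcal{Z}(\R^E)$ yields $Z(h \cdot f) \in \Phi$, so $h \cdot f \in \widecheck{\Phi}$. There is no real obstacle here — the only point worth stressing is that upward closure is stipulated \emph{within} the zeroset lattice, so one must always check that the enlarged set remains a zeroset, which it does automatically for $Z(f+g)$ and $Z(h \cdot f)$.
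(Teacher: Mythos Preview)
Your proof is correct and follows essentially the same approach as the paper's: both verify the ideal axioms directly via $Z(0_E)=\R^E$, $Z(f)\cap Z(g)\subseteq Z(f+g)$, and $Z(f)\subseteq Z(h\cdot f)$, invoking the filter's closure under finite intersection and upward closure within zerosets. The only cosmetic difference is that the paper explicitly passes to a common finite $E'\subseteq E$ to record that $f+g$ and $h\cdot f$ lie in $\ci(\R^E)$, which you (reasonably) take for granted since $\ci(\R^E)$ is already a ring.
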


\begin{proof}
Note first that
$$ \widecheck{\Phi} = \{ f \in \ci(\R^E) \mid (\exists E' \subseteq_{\rm fin} E)(\exists {f'} \in \ci(\R^{E'}))((\mu_{E'E}({f'}) = f) \& (\pi_{EE'}^{\dashv}[Z({f'})] \in \Phi))\} \subseteq \ci(\R^E).$$

It is easy to see that $0_E \in \widecheck{\Phi}$. In fact, $Z(0_E) = \R^E \in \Phi$.

Given $f \in \widecheck{\Phi} \subseteq \ci(\R^E)$ and $h \in \ci(\R^E)$.
 Select $E' \subseteq_{fin} E$ and $f', h' \in \ci(\R^{E'})$ such that $\mu_{E'E}(f') = f , \mu_{E'E}(h') = h$. Then $h.f = \mu_{E'E}(h'.f') \in \ci(\R^E)$ and $Z(h.f) = Z(h) \cup Z(f) \supseteq Z(f) \in \Phi$. Thus $h.f \in \widecheck{\Phi}$.

Let $f,g \in \widecheck{\Phi}$. Select $E' \subseteq_{fin} E$ and $f', g' \in \ci(\R^{E'})$ such that $\mu_{E'E}(f') = f , \mu_{E'E}(g') = g$. Thus
$\pi_{EE'}^{\dashv}[Z({f'})], \pi_{EE'}^{\dashv}[Z({g'})] \in \Phi$ and $f+g = \mu_{E'E}(f'+g') \in \ci(\R^E)$.
Since $Z(f + g) \supseteq Z(f)\cap Z(g) \in \Phi$, we obtain $f+g \in \widecheck{\Phi}$.

\end{proof}

\begin{proposition}\label{tatuape}Consider the partially ordered sets:
$$\mathfrak{F} = (\{ \Phi \subseteq \wp({\cal Z}(\R^E)) \mid \Phi\, \text{is a  filter}\}, \subseteq )$$
\noindent and 
$$\mathfrak{I} = (\{ I \subseteq \ci(\R^E) \mid I\,\, \text{is an ideal of}\, \ci(\R^E)\}, \subseteq )$$
The following functions:

$$\begin{array}{cccc}
\vee : & \mathfrak{F} & \rightarrow & \mathfrak{I}\\
   & \Phi & \mapsto & \widecheck{\Phi}
\end{array}$$

$$\begin{array}{cccc}
\wedge : & \mathfrak{I} & \rightarrow & \mathfrak{F}\\
   & I & \mapsto & \widehat{I}
\end{array}$$
form a {\em covariant} Galois connection, $\wedge \dashv \vee$  , that is:
\begin{itemize}
    \item[{\rm (a)}]{Given $\Phi_1, \Phi_2 \in \mathfrak{F}$ such that $\Phi_1 \subseteq \Phi_2$, then $\widecheck{\Phi_1} \subseteq \widecheck{\Phi_2}$;} 
    \item[{\rm (b)}]{Given $I_1, I_2 \in \mathfrak{I}$ such that $I_1 \subseteq I_2$ then $\widehat{I_1} \subseteq \widehat{I_2}$;}
    \item[{\rm (c)}]{For every $\Phi \in \mathfrak{F}$ and every $I \in \mathfrak{I}$ we have:
    $$\widehat{I} \subseteq \Phi \iff I \subseteq \widecheck{\Phi}$$}
    
    Moreover, the mappings $(\vee, \wedge)$ corresponds: \\
    (1) $\wp({\cal Z}(\R^E))$ and $\ci(\R^E)$;\\
    (2) Proper filters of $(\mathfrak{F}, \subseteq)$ and proper ideals of $(\mathfrak{I}, \subseteq)$.
    
\end{itemize}

\end{proposition}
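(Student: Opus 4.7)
My plan is to verify the three items (a), (b), (c) characterizing the Galois connection, and then handle the two additional correspondences separately. Items (a) and (b) are monotonicity statements and should follow directly from unpacking the definitions: if $\Phi_1 \subseteq \Phi_2$, then $f \in \widecheck{\Phi_1}$ means $Z(f) \in \Phi_1 \subseteq \Phi_2$, so $f \in \widecheck{\Phi_2}$; similarly, if $X = Z(f) \in \widehat{I_1}$ with $f \in I_1 \subseteq I_2$, then the same $f$ witnesses $X \in \widehat{I_2}$.

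The adjunction condition (c) is also essentially formal: if $\widehat{I} \subseteq \Phi$, then for any $f \in I$ the zeroset $Z(f)$ lies in $\widehat{I} \subseteq \Phi$, forcing $f \in \widecheck{\Phi}$; conversely, if $I \subseteq \widecheck{\Phi}$, then any $X \in \widehat{I}$ is realized as $Z(f)$ for some $f \in I \subseteq \widecheck{\Phi}$, and $f \in \widecheck{\Phi}$ means precisely $X = Z(f) \in \Phi$.

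For the top-element correspondence (1), I would observe that $\widecheck{\wp(\mathcal{Z}(\R^E))} = \ci(\R^E)$ because every $f \in \ci(\R^E)$ trivially has $Z(f) \in \wp(\mathcal{Z}(\R^E))$, and $\widehat{\ci(\R^E)} = \wp(\mathcal{Z}(\R^E))$ because every zeroset is by definition of the form $Z(g)$ for some $g \in \ci(\R^E)$. For the correspondence of proper filters with proper ideals (2), since $Z(1) = \emptyset$, the equivalence $1 \in \widecheck{\Phi} \iff \emptyset \in \Phi$ is immediate and handles one direction in each implication.

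The one step that is not purely formal, and which I expect to be the main (mild) obstacle, is showing that $\emptyset \in \widehat{I}$ forces $1 \in I$. By the definition of $\widehat{I}$, the assumption produces some $E' \subseteq_{\rm fin} E$ and $f' \in \mu_{E'E}^{\dashv}[I]$ with $\pi_{EE'}^{\dashv}[Z(f')] = \emptyset$; since $\pi_{EE'}$ is surjective (any function $E' \to \R$ extends to $E \to \R$), this means $Z(f') = \emptyset$, so $f'$ is nowhere zero on $\R^{E'}$, and its pointwise reciprocal therefore lies in $\ci(\R^{E'})$, making $f'$ invertible. Applying the $\ci$-ring homomorphism $\mu_{E'E}$ yields an inverse of $\mu_{E'E}(f') \in I$ inside $\ci(\R^E)$, whence $1 \in I$ since $I$ is an ideal. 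This is the only place where smooth geometry (rather than pure order-theoretic formalism) enters the argument.
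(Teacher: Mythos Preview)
Your proposal is correct and follows essentially the same route as the paper: items (a), (b), (c) are dismissed there as ``follows directly from the definitions'', and the two extra correspondences are argued exactly as you do. The only point where you add substance is the implication ``$\emptyset \in \widehat{I} \Rightarrow 1 \in I$'': the paper simply asserts that $f \in \ci(\R^E)^{\times}$ is equivalent to $Z(f) = \emptyset$, whereas you actually justify the nontrivial direction by descending to a finite $E' \subseteq E$, using surjectivity of $\pi_{EE'}$, and taking the pointwise reciprocal --- so your write-up is, if anything, more complete than the paper's on this step.
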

\begin{proof} Items (a), (b), (c) follows directly from the definitions.

Suppose that $\Phi = \wp({\cal Z}(\R^E))$. Then $\widecheck{\Phi} = \{f \in \ci(\R^E)) \mid  f \in \wp({\cal Z}(\R^E))\}$, thus $\widecheck{\Phi} = \ci(\R^E)$. 

Suppose that $I = \ci(\R^E)$. Then 
$\widehat{I} = \{Z(f) \in \wp({\cal Z}(\R^E)) \mid  f \in I\}$, thus $\widehat{I} = \wp({\cal Z}(\R^E))$.

 Suppose that $\Phi$ is a proper filter. If $f \in \ci(\R^E)$ is such that $Z(f) = \varnothing \notin \Phi$, then $f \in \ci(\R^E)^\times$ and $f \notin \widecheck{\Phi} \subseteq \ci(\R^E)$. Thus  $\widecheck{\Phi} \subseteq \ci(\R^E)$ is a proper ideal.

  Suppose that $I$ is a proper ideal. So  $f \notin I$ whenever $f \in \ci(\R^E)^\times$, i.e. whenever  $Z(f) = \emptyset$. Thus $\emptyset \notin \widehat{I}$,  i. e. $\widehat{I}$ is a proper filter.



\end{proof}




\begin{remark} \label{galois-re}
As in any (covariant) Galois connection, we have automatically that:\\

$\bullet$  $I \subseteq  \widecheck{\widehat{I}}$; $\Phi \supseteq  \widehat{\widecheck{\Phi}} $\\

$\bullet$  $\widehat{I} = \widehat{\widecheck{\widehat{I}}}$;  $\widecheck{\Phi}  = \widecheck{\widehat{\widecheck{\Phi}}} $

\end{remark}

The following result gives a more detailed information on these compositions.

\begin{proposition}\label{lapa}Let $I \subseteq \ci(\R^{E})$ be any ideal and $\Phi \subseteq \wp({\cal Z}(\R^E))$ be a filter of zerosets. Then:
\begin{enumerate}

\item   $\widehat{\widecheck{\Phi}} = \{X \subseteq (\R ^E) \mid \exists f \in \ci(\R^E) ( X= Z(f),  Z(f) \in \Phi) \} = \Phi$. 

\item  $\widecheck{\widehat{I}} = \{g \in \ci(\R ^E) \mid \exists f \in \ci(\R^E) ( f \in I, Z(g) = Z(f)) \} = \sqrt[\infty]{I}$.

\end{enumerate}

\end{proposition}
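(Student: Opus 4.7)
The overall plan is to dispatch both identities in each item as definitional unfoldings followed by one genuine algebraic step, pushing most of the work onto the finitely generated case (which has already been handled in Propositions \ref{equacional}, \ref{invertivel}, Example \ref{sat-exa}) via the standard reduction $g = \mu_{E'E}(g')$ for some finite $E' \subseteq E$. I expect the second equality in (2), identifying a ``zeroset-saturation'' set with $\sqrt[\infty]{I}$, to be the substantive step; the rest is bookkeeping.

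For item (1) I would first unfold the definitions to get $\widecheck{\Phi}=\{f\in\ci(\R^E): Z(f)\in\Phi\}$ and then, using the description of $\widehat{(-)}$ together with the direct-limit/factoring calculation made in the proof of \textbf{Proposition \ref{equacional2}}, rewrite $\widehat{\widecheck{\Phi}}$ as $\{X\subseteq\R^E : \exists f\in\ci(\R^E)\ (X=Z(f)\ \text{and}\ Z(f)\in\Phi)\}$. The inclusion $\widehat{\widecheck{\Phi}}\subseteq\Phi$ is then immediate. For the reverse inclusion, pick $X\in\Phi$; by definition of a filter of zerosets, $\Phi\subseteq\mathcal{Z}_E$, so $X=Z(f)$ for some $f\in\ci(\R^E)$; this $f$ lies in $\widecheck{\Phi}$, and thus $X\in\widehat{\widecheck{\Phi}}$.

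For item (2), the plan is first to unfold $\widehat{I}=\{Z(f):f\in I\}$ (again using the factoring argument) and deduce that $\widecheck{\widehat{I}}=\{g\in\ci(\R^E):\exists f\in I,\ Z(g)=Z(f)\}$. The key lemma I need is the extension of \textbf{Example \ref{sat-exa}} to arbitrary $E$, namely $\{g\}^{\infty-{\rm sat}} = \{h\in\ci(\R^E): Z(h)\subseteq Z(g)\}$. I would prove both inclusions by choosing a finite $E'\subseteq E$ carrying both $g=\mu_{E'E}(g')$ and $h=\mu_{E'E}(h')$, observing that $Z(h)\subseteq Z(g)\iff Z(h')\subseteq Z(g')$ (since $\pi_{EE'}$ is surjective), and then transferring invertibility of $\eta_{g'}(h')$ across the canonical morphism $\ci(\R^{E'})\{g'^{-1}\}\to\ci(\R^{E})\{g^{-1}\}$ induced by the universal property of localization (this uses the tensor/colimit description $\ci(\R^E)\cong\ci(\R^{E'})\otimes_\infty\ci(\R^{E\setminus E'})$).

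Once the $\ci$-saturation is characterized, I would invoke \textbf{Proposition \ref{alba}} to rewrite $\sqrt[\infty]{I}=\{g:\exists f\in I\ \text{with}\ Z(f)\subseteq Z(g)\}$ and complete the proof by showing the equivalence $(\exists f\in I)(Z(f)\subseteq Z(g))\iff(\exists f\in I)(Z(f)=Z(g))$. The $(\Leftarrow)$ direction is trivial; for $(\Rightarrow)$, given $f\in I$ with $Z(f)\subseteq Z(g)$, set $\widetilde{f}:=f\cdot g$. Since $I$ is an ideal, $\widetilde{f}\in I$, and $Z(\widetilde{f})=Z(f)\cup Z(g)=Z(g)$. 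The main obstacle is cleanly arguing the $\{g\}^{\infty-{\rm sat}}$ characterization in the infinite-variable setting; everything else is either unfolding notation or a one-line multiplication trick.
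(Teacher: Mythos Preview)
Your proposal is correct and follows essentially the same strategy as the paper: reduce to a finite $E'\subseteq E$ carrying all relevant functions, and use the finitely generated identification $\ci(\R^{E'})\{g'^{-1}\}\cong\ci(\R^{E'}\setminus Z(g'))$ (Example~\ref{frac-exa}/Example~\ref{sat-exa}) to translate between ``$\eta_{g'}(f')$ invertible'' and ``$Z(f')\subseteq Z(g')$''. The paper does not isolate your intermediate lemma ``$\{g\}^{\infty-{\rm sat}}=\{h:Z(h)\subseteq Z(g)\}$ for arbitrary $E$'' but instead rewrites $\sqrt[\infty]{I}$ directly in terms of finite $E'$ and argues the two inclusions in one pass; the content is the same.

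The one genuine difference is in the direction $\sqrt[\infty]{I}\subseteq\widecheck{\widehat{I}}$. From $f\in I$ with $Z(f)\subseteq Z(g)$ the paper concludes $Z(g)\in\widehat{I}$ by invoking the already-established fact that $\widehat{I}$ is a \emph{filter} (upward closed among zerosets). You instead replace $f$ by $f\cdot g\in I$, which has $Z(f\cdot g)=Z(g)$ exactly, and so land in $\widecheck{\widehat{I}}$ by definition. Your trick is slightly more elementary (it does not appeal to the preceding proposition that $\widehat{I}$ is a filter) and makes the middle description $\{g:\exists f\in I,\ Z(g)=Z(f)\}$ visibly equal to $\{g:\exists f\in I,\ Z(f)\subseteq Z(g)\}$; the paper's route is shorter once the filter property is in hand. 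Either way the argument goes through without issue.
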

\begin{proof}
Item (1) and the first equality in item (2) follow directly from the definitions.
 We will show that 
 $$\{g \in \ci(\R ^E) \mid \exists f \in \ci(\R^E) ( f \in I, Z(g) = Z(f)) \} = \sqrt[\infty]{I}$$

Note that:
$\sqrt[\infty]{I} = 
\{ g \in \ci(\R^{E}) \mid (\exists {f} \in I )( (\eta_{{g}}({f}) \in \ci(\R^{E})\{ {g}^{-1}\}^{\times})\} =$ \\
$\{ g \in \ci(\R^{E}) \mid (\exists E' \subseteq_{\rm fin} E)(\exists \widetilde{g} \in \ci(\R^{E'}))(\exists \widetilde{f} \in \mu_{E'E}^{\dashv}[I])(g=\widetilde{g}\circ \pi_{EE'})\& (\eta_{\widetilde{g}}(\widetilde{f}) \in \ci(\R^{E'})\{ \widetilde{g}^{-1}\}^{\times})\}$

Given $g \in \widecheck{\widehat{I}}$, there is some finite $E' \subseteq E$, some $\widetilde{g} \in \ci(\R^{E'})$ with $g=\widetilde{g}\circ \pi_{EE'}$ and some $\widetilde{f} \in \mu_{E'E}^{\dashv}[I]$ such that $\pi_{EE'}^{\dashv}[Z(\widetilde{g})] = Z(g) = Z(f) =  \pi_{EE'}^{\dashv}[Z(\widetilde{f})]$. Since $\pi_{EE'}: \R^{E} \to \R^{E'}$ is surjective, we have $Z(\widetilde{g}) = \pi_{EE'}[\pi_{EE'}^{\dashv}[Z(\widetilde{g})]] = \pi_{EE'}[\pi_{EE'}^{\dashv}[Z(\widetilde{f})]] = Z(\widetilde{f})$, so $Z(\widetilde{g}) \supseteq Z(\widetilde{f})$. It follows that $\widetilde{f}\upharpoonright_{\R^{E'}\setminus Z(\widetilde{g})} \in \ci(\R^{E'}\setminus Z(\widetilde{g}))^{\times}$ and, by {\bf Example \ref{frac-exa}}, $\eta_{\widetilde{g}}(\widetilde{f}) \in \ci(\R^{E'})\{ \widetilde{g}^{-1}\}^{\times}$.  Since there is $\widetilde{f} \in \mu_{E'E}^{\dashv}[I]$ such that $\eta_{\widetilde{g}}(\widetilde{f}) \in \ci(\R^{E'})\{ \widetilde{g}^{-1}\}^{\times}$,  it follows that $g \in \sqrt[\infty]{I}$.

Conversely, given $g \in \sqrt[\infty]{I}$, there is some finite $E' \subseteq_{\rm fin} E$, some $\widetilde{g} \in \ci(\R^{E'})$ and some $\widetilde{f} \in \mu_{E'E}^{\dashv}[I]$ such that $g = \widetilde{g} \circ \pi_{E'}$ and $\eta_{\widetilde{g}}(\widetilde{f}) \in \ci(\R^{E'})\{ \widetilde{g}^{-1}\}^{\times}$. So $\widetilde{f}\upharpoonright_{\R^{E'}\setminus Z(\widetilde{g})} \in \ci(\R^{E'}\setminus Z(\widetilde{g}))^{\times}$,  $Z(\widetilde{f})\subseteq Z(\widetilde{g})$ and $\pi_{EE'}^{\dashv}[Z(\widetilde{f})] \subseteq \pi_{EE'}^{\dashv}[Z(\widetilde{g})]$. Since $\pi_{EE'}^{\dashv}[Z(\widetilde{f})] \in \widehat{I}$ and $\widehat{I}$ is a filter, we have $Z(g) = \pi_{EE'}^{\dashv}[Z(\widetilde{g})] \in \widehat{I}$, so $ g  \in \widecheck{\widehat{I}}$.
\end{proof}

\begin{remark}The item (2) in the previous proposition ensures  that the $\ci-$radical of any ideal of a $\ci-$ring is an ideal.
\end{remark}

\begin{proposition}\label{lerigo}Let $E$ be any set, and consider $A=\ci(\R^E)$. The Galois connection $\wedge \dashv \vee$ establishes  bijective correspondences between the:
\begin{itemize}
\item[{\rm (a)}]{The poset of all (proper) filters of zerosets of $\R^E$ and the poset of all (proper) $\ci-$radical ideals of $\ci(\R^E)$, $\mathfrak{I}^{\infty} = \{ I \in \mathfrak{I} \mid \sqrt[\infty]{I}=I\}$;}
\item[{\rm (b)}]{The set of all maximal filters of $(\mathfrak{F}, \subseteq)$ and the set of all maximal ideals of $(\mathfrak{I}, \subseteq)$;}
   \item[{\rm (c)}]{ The poset of all prime (proper) filters of $(\mathfrak{F}, \subseteq)$ and the poset of all prime (proper) $\ci$-radical ideals of $(\mathfrak{I}, \subseteq)$.}

\end{itemize}
\end{proposition}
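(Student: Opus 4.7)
The plan is to bootstrap all three correspondences from the key identities of Proposition \ref{lapa}: namely $\widehat{\widecheck{\Phi}} = \Phi$ for every filter $\Phi$, and $\widecheck{\widehat{I}} = \sqrt[\infty]{I}$ for every ideal $I$. Combined with the monotonicity of $\wedge$ and $\vee$ already established in Proposition \ref{tatuape}(a),(b), this immediately yields that the pair $(\wedge, \vee)$ restricts to a pair of mutually inverse order isomorphisms between the poset $\mathfrak{F}$ of filters of zerosets of $\R^E$ and the poset $\mathfrak{I}^\infty$ of $\ci$-radical ideals of $\ci(\R^E)$. Part (a) then follows at once: the properness clauses of Proposition \ref{tatuape} say that $\Phi$ is proper iff $\widecheck{\Phi}$ is a proper ideal, and symmetrically, so the order isomorphism restricts to one between proper filters and proper $\ci$-radical ideals.

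For part (b), any order isomorphism of posets carries maximal elements to maximal elements; so maximal proper filters correspond bijectively to maximal proper $\ci$-radical ideals. To identify the latter with the set of maximal ideals of $\ci(\R^E)$, I would argue that the two notions coincide. On one hand, every maximal ideal $\mathfrak{m}$ of $\ci(\R^E)$ is $\ci$-radical: the quotient $\ci(\R^E)/\mathfrak{m}$ is a $\ci$-field, which is $\ci$-reduced (any non-zero element of a field is already invertible, so its localization is non-trivial, and hence $\sqrt[\infty]{(0)}=(0)$), so Proposition \ref{rad-pr}(a) applies. On the other hand, if $\mathfrak{m}$ is a maximal $\ci$-radical ideal and $J \supsetneq \mathfrak{m}$ is a proper ideal, then $\sqrt[\infty]{J}$ is a proper $\ci$-radical ideal strictly containing $\mathfrak{m}$ (properness because $1 \in \sqrt[\infty]{J}$ would force $\ci(\R^E)/J \cong 0$), contradicting maximality; so $\mathfrak{m}$ is maximal among all ideals.

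For part (c), the essential input is the elementary identity $Z(f \cdot g) = Z(f) \cup Z(g)$ valid for all $f,g \in \ci(\R^E)$. Given a proper prime $\ci$-radical ideal $I$ and zerosets $Z_1, Z_2 \in \mathcal{Z}_E$ with $Z_1 \cup Z_2 \in \widehat{I}$, I write $Z_i = Z(f_i)$ and observe that $Z(f_1 f_2) = Z_1 \cup Z_2 \in \widehat{I}$, so $f_1 f_2 \in \widecheck{\widehat{I}} = I$; primality of $I$ yields some $f_i \in I$, hence $Z_i \in \widehat{I}$, showing $\widehat{I}$ is prime. Conversely, if $\Phi$ is a proper prime filter and $fg \in \widecheck{\Phi}$, then $Z(f) \cup Z(g) = Z(fg) \in \Phi$, whence by primality $Z(f) \in \Phi$ or $Z(g) \in \Phi$, i.e.\ $f \in \widecheck{\Phi}$ or $g \in \widecheck{\Phi}$. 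The main subtlety, as expected, lies in part (b): the coincidence of maximal ideals with maximal $\ci$-radical ideals depends crucially on the specific feature that $\ci$-fields are automatically $\ci$-reduced, a phenomenon native to the category of $\ci$-rings.
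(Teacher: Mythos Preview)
Your proof is correct. Parts (a) and (b) follow essentially the same line as the paper: both rest on the identities $\widehat{\widecheck{\Phi}} = \Phi$ and $\widecheck{\widehat{I}} = \sqrt[\infty]{I}$ from Proposition~\ref{lapa}, together with the fact that a proper ideal has proper $\ci$-radical (so that maximal ideals and maximal $\ci$-radical ideals coincide). Your justification of this last point via $\ci$-fields being $\ci$-reduced is a legitimate alternative to the paper's more terse appeal to ``a combination of previous results''; you are also more explicit about the converse direction (maximal $\ci$-radical $\Rightarrow$ maximal), which the paper leaves implicit.

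For part (c), your argument is genuinely more direct than the paper's. To show $\widehat{I}$ is a prime filter when $I$ is a $\ci$-radical prime ideal, the paper passes to a finite $E' \subseteq E$, pulls everything back to $\ci(\R^{E'})$, and argues there using that $I' = \mu_{E'E}^{\dashv}[I]$ is again $\ci$-radical and prime. You bypass this reduction entirely: from $Z(f_1) \cup Z(f_2) = Z(f_1 f_2) \in \widehat{I}$ you get $f_1 f_2 \in \widecheck{\widehat{I}} = \sqrt[\infty]{I} = I$ directly, and primality finishes it. This is cleaner and shows that, once Proposition~\ref{lapa}(2) is in hand globally, no descent to the finitely generated case is needed here. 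The paper's detour buys nothing extra; your route is simply shorter.
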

\begin{proof}

We saw in {\bf Proposition \ref{tatuape}}, that the functions $(\vee, \wedge)$ restricts to maps between proper filters of zerosets of $\R^E$ and proper ideals of $\ci(\R^E)$. Thus the additional parts in items (a) and (c) are automatic.

Ad (a): Let $\Phi$ be a filter of zerosets in $\R^E$, then  by {\bf Proposition \ref{lapa}.(1)} $\Phi = \widehat{\widecheck{\Phi}}$. Let $I$ be a $\ci$-radical ideal in $\ci(\R^E)$, then by {\bf Proposition \ref{lapa}.(2)} and Remark \ref{galois-re}

$$\widecheck{\widehat{\widecheck{\widehat{I}}}} = \widecheck{\widehat{I}}  = {\sqrt[\infty]{I}} =  I$$ 

Thus, since $\widehat{I} = \widehat{\widecheck{\widehat{I}}}$ and   $\widecheck{\Phi}  = \widecheck{\widehat{\widecheck{\Phi}}} $, the (increasing) mappings $(\vee, \wedge)$ establishes a bijective correspondence between
the poset of all filters of zerosets of $\R^E$ and the poset of all $\ci-$radical ideals of $\ci(\R^E)$.

Ad (b): First of all, note that, by a combination of previous results,   if $I$ is a proper ideal of $\ci(\R^E)$, then $\sqrt[\infty]{I}$ is also a proper ideal of $\ci(\R^E)$. Thus if $I$ is a (proper) maximal ideal of $\ci(\R^E)$, then $I = \sqrt[\infty]{I}$.

 Now, by item (a), the increasing mappings $(\vee, \wedge)$ establishes a bijective correspondence between
the poset of all proper filters of zerosets of $\R^E$ and the poset of all proper $\ci-$radical ideals of $\ci(\R^E)$. Thus the mappings $(\vee, \wedge)$  restrict to a pair of inverse bijective correspondence between the set of all maximal filters of zerosets of $\R^E$ and the set of all maximal ideals of $\ci(\R^E)$.

Ad (c): By the bijective correspondence in item (a), it is enough to show that the mappings   $(\vee, \wedge)$  restricts to a pair of mappings  between the set of all prime filters of zerosets of $\R^E$ and the set of all $\ci$-radical prime ideals of $\ci(\R^E)$.

Let $\Phi$ be a prime filter of zerosets of $\R^E$. If $f,g \in \mathcal{C}^{\infty}(\mathbb{R}^E)$ are such that $f \cdot g \in \widecheck{\Phi}$, then $Z(f\cdot g) = Z(f) \cup Z(g) \in \Phi$, so we have $Z(f) \in \Phi$ or $Z(g) \in \Phi$. Thus, $f \in \widecheck{\Phi}$ or $g \in \widecheck{\Phi}$, so $\widecheck{\Phi}$ is a prime ideal of $\ci(\R^E)$; moreover, by item (a), $\widecheck{\Phi}$ is $\ci$-radical.

Let $I$ be a $\ci$-radical prime ideal of $\mathcal{C}^{\infty}(\R^E)$, that is, if $f,g \in \mathcal{C}^{\infty}(\R^E)$ are such that $f \cdot g \in I$ then $f \in I$ or $g \in I$. We need to show that $\widehat{I} = \{ Z(h) | h \in I\}$ is a prime filter of zerosets of $\R^E$.

Let $F=Z(f), G=Z(g), H = Z(h)$ be zerosets of $\R^E$ such that $F \cup G  = H \in \widehat{I}$, $h \in I$. Select $E'\subseteq_{fin} E$ and $f', g', h' \in \ci(\R^{E'})$ such that $f= \mu_{E'E}(f'), g=\mu_{E'E}(g'), h = \mu_{E'E}(h')$. Let $F'=Z(f'), G'=Z(g'), H' = Z(h') \subseteq \R^{E'}$ then $F' \cup G' = H' \in \widehat{I'}$, where $h' \in I' := \mu_{E'E}^\dashv[I]$. 
Since $I$ is  a $\ci$-radical prime ideal of $\mathcal{C}^{\infty}(\R^E)$, then $I'$ is a $\ci$-radical prime ideal of $\mathcal{C}^{\infty}(\R^{E'})$, see {\bf Proposition \ref{egito}.(d)}. {\underline{\em If}} we show that $\widehat{I'}$ is a prime filter of zerosets of $\R^{E'}$ then, we may assume w.l.o.g. that  $Z(f') = F' \in \widehat{I'}$ and  $f' \in \sqrt[\infty]{I'} = I'= \mu_{E'E}^\dashv[I]$;  thus $f = \mu_{E'E}(f') \in I$ and $F = Z(f) \in \widehat{I}$, finishing the proof.

We will prove that $\widehat{I'}$ is a prime filter. We have $Z(f' . g') = Z(f') \cup Z(g') = F' \cup G' = H' = Z(h')$, where $h' \in I'$. Then, $f'.g' \in  \sqrt[\infty]{I'} = I'$. Since $I'$ is a prime ideal, $f' \in I'$ or $g' \in I'$. Thus $F' = Z(f') \in \widehat{I'}$ or $G' = Z(g') \in \widehat{I'}$.

\end{proof}



\section{The order theory of $\ci$-reduced $\ci$-rings}

The results established in the previous section are fundamental to develop an order theory over a broad class $\ci$-rings. In fact, in order to get nice results, we need to assume some technical conditions: the $\ci$-rings must be non-trivial (i.e.  $0 \neq 1$) and  {\bf $\ci$- reduced} (see section 1). 

The fundamental notion here is the following (see \cite{rings1}):

\begin{definition} \label{prec-def} Let  $A$ be a  $\ci-$ring. The {\em canonical relation} on $A$ is

$$\prec_A = \{ (a,b) \in A \times A \mid (\exists u \in A^{\times})(b-a = u^2)\}$$


\end{definition}

\begin{remark} \label{preserve-re} Note that the canonical relation is preserved by $\ci$-homomorphism. In more details: let $A, A'$ be $\ci$-rings and $h : A \to A'$ be a $\ci$-homomorphism. Then for each $a, b \in A$:
$$ a \prec_A b  \ \Rightarrow h(a) \prec_{A'} h(b)$$.
\end{remark}

\begin{proposition} \label{compatibilidade} Let $A$ be any $\ci-$ring. The canonical relation on $A$, $\prec_A$, is compatible with the sum and with the product on $A$.
\end{proposition}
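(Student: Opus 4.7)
The plan is to unwind the definition of $\prec_A$ and verify the two standard compatibility conditions for a strict partial order on a ring, namely translation invariance under addition and closure of the positive cone under multiplication. Both reduce to elementary manipulations with squares of invertible elements, together with the fact that $A^{\times}$ is a multiplicative subgroup of $A$.

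For compatibility with the sum, I would take $a, b, c \in A$ with $a \prec_A b$ and pick $u \in A^{\times}$ such that $b - a = u^{2}$. Then $(b+c) - (a+c) = b - a = u^{2}$, with the same invertible witness $u$, so $(a+c) \prec_A (b+c)$. (If the intended formulation allows independent translations on both sides, the same one-line calculation works after first translating by $c$ on the left and then by the appropriate element on the right.)

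For compatibility with the product, the relevant statement is: if $0 \prec_A a$ and $0 \prec_A b$, then $0 \prec_A ab$. I would choose witnesses $u, v \in A^{\times}$ with $a - 0 = u^{2}$ and $b - 0 = v^{2}$. Since $A^{\times}$ is closed under products (in any unital $\ci$-ring), $uv \in A^{\times}$, and
\[
ab - 0 = u^{2}v^{2} = (uv)^{2},
\]
so $0 \prec_A ab$. A slight variant, needed if the authors phrase compatibility as ``$a \prec b$ and $0 \prec c$ imply $ac \prec bc$,'' follows by writing $b - a = u^{2}$ and $c = v^{2}$ with $u, v \in A^{\times}$, whence $bc - ac = (b-a)c = u^{2}v^{2} = (uv)^{2}$ with $uv \in A^{\times}$.

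I do not expect any real obstacle here: the only nontrivial ingredient is that the product of two units is a unit, which holds in every unital ring and in particular in every $\ci$-ring. No appeal to $\ci$-reducedness, to the characterizations via zerosets from Section~2, or to any smooth Tietze-type extension result is needed at this stage; those become essential only when one wants to promote $\prec_A$ to a genuine strict partial order or a total ordering, which is the subject of the subsequent results.
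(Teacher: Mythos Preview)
Your proposal is correct and matches the paper's proof essentially line for line: the paper proves translation invariance $(a \prec b \Rightarrow a+x \prec b+x)$ via the same witness, and for the product it uses precisely your ``slight variant'' ($a \prec b$ and $0 \prec x$ imply $ax \prec bx$) with $(b-a)x = c^2 d^2 = (cd)^2$ and $cd \in A^{\times}$. Your closing remark that no $\ci$-reducedness or zeroset machinery is needed here is also in line with the paper, which defers those tools to the later propositions.
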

\begin{proof}
Let $a,b \in A$ be such that $a \prec b$ and let $c \in A^\times$ such that $(b - a=c^2)$.

Given any $x \in A$, we have:
$$(b+x)-(a+x) = b - a = c^2,$$
thus $a+x \prec b+x$.

Given $x \in A$ such that $0 \prec x$, one has, since $0 \prec x$, that $(\exists d \in A^{\times})(x=d^2)$. We have, thus:
$$b\cdot x - a \cdot x = (b-a)\cdot x = c^2 \cdot d^2 = (c \cdot d)^2.$$

Since both $c$ amd $d$ are invertible, it follows that $c\cdot d$ is invertible, and $a\cdot x \prec b \cdot x$.
\end{proof}

\begin{proposition} \label{irreflexive}If $A$ is a non trivial $\ci$-ring, then $\prec$, defined as above, is irreflexive, that is, 

$$(\forall a \in A)(\neg(a \prec a))$$
\end{proposition}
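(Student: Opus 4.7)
The plan is to argue by contradiction and unwind the definition of $\prec$. Suppose, for some $a \in A$, we had $a \prec a$. By \textbf{Definition \ref{prec-def}}, this means there exists $u \in A^{\times}$ with $a - a = u^2$, i.e. $u^2 = 0$.

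Now I would exploit the fact that $u$ is invertible: pick $v \in A$ with $uv = 1$. Squaring this equation gives $1 = (uv)^2 = u^2 v^2 = 0 \cdot v^2 = 0$, which contradicts the assumption that $A$ is non-trivial ($0 \neq 1$). Hence no such $u$ exists, and $\neg(a \prec a)$ for every $a \in A$.

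I do not foresee any real obstacle: the statement is essentially a one-line consequence of the definition of $\prec$ together with the elementary fact that in a non-trivial ring an invertible element cannot be nilpotent (in particular cannot square to zero). Note that no $\ci$-structure is needed beyond the underlying ring structure — only the existence of a multiplicative inverse for $u$ and the non-triviality hypothesis $0 \neq 1$.
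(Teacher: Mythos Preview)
Your proof is correct and follows essentially the same approach as the paper: both argue by contradiction, observe that $a \prec a$ forces $u^2 = 0$ for some invertible $u$, and derive $0 = 1$. The paper phrases the last step as ``$0$ would be invertible'' (since $c \in A^\times$ implies $c^2 \in A^\times$), while you spell out the same implication via $1 = (uv)^2 = u^2 v^2 = 0$; these are the same argument.
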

\begin{proof}Suppose there is some $a_0 \in A$ such that $a_0 \prec a_0$. By definition, this happens if, and only if there is some $c \in A^{\times}$ such that $0 = a_0 - a_0 = c^2$, so $0$ would be invertible and $0 = 1$.
\end{proof}

To obtain deeper information on the canonical relations, $\prec_A$, it is needed to pass to ``spatial''  specific characterizations of it, by the aid of the results developed in the previous section.  We start this enterprise by the following:

\begin{proposition}\label{mp}Let $A = \frac{\ci(\mathbb{R}^n)}{I}$ be a finitely generated $\ci-$reduced $\ci-$ring. Then:


$$(f+I \prec g + I)\iff ((\exists \varphi \in I)(\forall x \in Z(\varphi))(f(x)<g(x)))$$

\end{proposition}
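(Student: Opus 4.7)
The plan is to deduce the statement as an immediate application of \textbf{Corollary \ref{square-cor}} to the element $g - f \in \ci(\R^n)$, since the definition of $\prec$ is exactly the "square of a unit" condition that the corollary analyzes.

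First I would unpack the left-hand side: by \textbf{Definition \ref{prec-def}}, $f + I \prec g + I$ holds in $A = \ci(\R^n)/I$ iff there exists $u + I \in A^\times$ with
$$(g + I) - (f + I) = (u + I)^2,$$
equivalently, there is $u \in \ci(\R^n)$ such that $u + I \in A^\times$ and $(g-f) - u^2 \in I$. This is precisely condition (1) of \textbf{Corollary \ref{square-cor}} applied to the function $g - f$ in place of $f$.

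Next I would invoke that corollary directly: the equivalence $(1) \Leftrightarrow (3)$ yields
$$(\exists u \in \ci(\R^n))\bigl(((g-f)+I = u^2+I) \,\&\, (u+I \in A^\times)\bigr) \iff (\exists \psi \in I)(\forall x \in Z(\psi))((g-f)(x) > 0),$$
and rewriting $(g-f)(x) > 0$ as $f(x) < g(x)$ gives the desired biconditional.

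The only thing to note is that the "hard" analytic content — the Smooth Tietze extension used to lift pointwise positivity on a zeroset to a global square of a unit modulo $I$ — has already been absorbed into \textbf{Proposition \ref{invertivel}} and \textbf{Corollary \ref{square-cor}}. Consequently there is no genuine obstacle here; the statement is a clean translation, and the proof should be essentially one sentence citing the corollary applied to $g - f$.
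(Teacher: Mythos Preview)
Your proposal is correct and is exactly the approach the paper itself endorses: the paper's proof opens by stating ``This is a direct application of {\bf Corollary \ref{square-cor}}, but we will register here a detailed proof'', and then merely unpacks that corollary for $g-f$ using \textbf{Propositions \ref{equacional}} and \textbf{\ref{invertivel}} and the Smooth Tietze extension (Fact \ref{obs}). In other words, your one-line citation of Corollary \ref{square-cor} applied to $g-f$ is precisely the intended argument; the paper's longer version adds no new idea.
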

\begin{proof} This is a direct application of {\bf Corollary \ref{square-cor}}, but we will register here a detailed proof.

Suppose $f+I \prec g + I$, so there is some $h+I \in \left( \frac{\ci(\mathbb{R}^n)}{I}\right)^{\times}$ such that $g-f + I = h^2 + I$. Since $h+I$ is invertible, by \textbf{Proposition \ref{invertivel}} there is some $\psi \in I$ such that:

$$(\forall x \in Z(\psi))(h(x)\neq 0)$$

Since $g-f + I = h^2 + I$, by \textbf{Proposition \ref{equacional}}, there is some $\phi \in I$ such that:

$$(\forall x \in Z(\phi))(g(x)-f(x) = h^2(x)),$$



Taking $\varphi = \phi^2 + \psi^2 \in I$ we have, for every $x \in Z(\psi)\cap Z(\phi) = Z(\varphi)$  both:

$$g(x)-f(x) = h^2(x)$$
and
$$h^2(x) > 0$$

Hence,

$$(\forall x \in Z(\varphi))(f(x)< g(x))$$

Conversely, suppose $f,g \in \ci(\mathbb{R}^n)$ are such that there is some $\varphi \in I$ with satisfying:

$$(\forall x \in Z(\varphi))(f(x)<g(x)).$$

Since $f$ and $g$ are continuous functions, there is an open subset $U \subseteq \R^n$ such that $Z(\varphi) \subseteq U$ and 

$$(\forall x \in U)(f(x)<g(x)).$$

The $\ci$-function:
$$\begin{array}{cccc}
   m: & \R^n & \rightarrow & \mathbb{R}\\
    & x & \mapsto & g(x) - f(x)
\end{array}$$

\noindent is such that $(\forall x \in U)(m(x)>0)$. Thus $m \upharpoonright_{Z(\varphi)}$ is smooth and  $(\forall x \in Z(\varphi))(m(x)>0)$,  so by \textbf{Fact \ref{obs}}  there is a smooth function
$\widetilde{m} \in \ci(\R^n)$ such that  $\widetilde{m}\upharpoonright_{Z(\varphi)} = {m}\upharpoonright_{Z(\varphi)} $ and   $(\forall x \in \R^n)(\widetilde{m}(x) >  0)$.

Now the function $h := \sqrt{\widetilde{m}} : \R^n \to \R$ is a smooth function and since $h \in (\ci(\R^n))^\times$,
we have $h+ I \in \left( \frac{\ci(\mathbb{R}^n)}{I}\right)^{\times}$.

Therefore
$$(\forall x \in Z(\varphi))(g(x)-f(x) = m(x) = \widetilde{m}(x) = h^2(x)))$$

Since $I = \sqrt[\infty]{I}$, by \textbf{Proposition \ref{equacional}} it follows that $(g-f)+I = h^2 + I$ with $h+I$ invertible. Thus, $f+I \prec g+I$.
\end{proof}



\begin{proposition}\label{saf}Given any $\ci-$reduced $\ci-$ring $A$. Then there is a   directed system of its finitely generated $\ci-$rings
$(A_i, \alpha_{ij}: A_i \to A_j)_{\substack{i \in I \\ i \leq j}}$ such that:

\begin{enumerate}
    \item  $A \cong   \varinjlim_{i \in I} A_i;$
    \item For each $i, j$, if $i \leq j$ then $\alpha_{ij} : A_i \to A_j$ and $\alpha_i : A_i \to A$ are injective;
    \item For each $i \in I$, $A_i$ is a $\ci$-reduced  $\ci-$ring;
    \item  For each $a, b \in A$, $a \prec_A b$ iff 
    $\exists i \in I, \exists a_i, b_i \in A_i (\alpha_i(a_i) = a , \alpha_i(b_i) = b \ \text{and} \ a_i \prec_{A_i} b_i).$
\end{enumerate}
\end{proposition}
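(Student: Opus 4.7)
Since $A$ is $\ci$-reduced, write $A = \ci(\R^E)/I$ with $I = \sqrt[\infty]{I}$. Index the directed system by the poset $\Lambda$ of finite subsets $E' \subseteq E$ ordered by inclusion. For each $E' \in \Lambda$ set $I_{E'} := \mu_{E'E}^{\dashv}[I] \subseteq \ci(\R^{E'})$ and $A_{E'} := \ci(\R^{E'})/I_{E'}$. For $E' \subseteq E''$ in $\Lambda$, let $\alpha_{E'E''} : A_{E'} \to A_{E''}$ be the map induced by $\mu_{E'E''}$, and let $\alpha_{E'} : A_{E'} \to A$ be induced by $\mu_{E'E}$. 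These are well-defined and compatible in view of the identity $\mu_{E''E} \circ \mu_{E'E''} = \mu_{E'E}$, which in particular gives $\mu_{E'E''}[I_{E'}] \subseteq I_{E''}$.

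\textbf{Items (1)--(3).} Property (3) is immediate: by \textbf{Proposition \ref{egito}.(d)}, $I_{E'}$ is $\ci$-radical, hence by \textbf{Proposition \ref{rad-pr}.(a)} the quotient $A_{E'}$ is $\ci$-reduced. For (2), the kernel of $\alpha_{E'E''}$ consists of $f + I_{E'}$ with $\mu_{E'E''}(f) \in I_{E''} = \mu_{E''E}^{\dashv}[I]$, which forces $\mu_{E'E}(f) \in I$, i.e.\ $f \in I_{E'}$; the analogous argument gives the injectivity of $\alpha_{E'}$. For (1), use that $\ci(\R^E) \cong \varinjlim_{E' \subseteq_{\rm fin} E} \ci(\R^{E'})$, together with the fact that directed colimits commute with quotients by compatible ideals. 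Concretely, every element $f + I$ of $A$ has $f = \mu_{E'E}(f_0)$ for some $E' \in \Lambda$, so $f+I = \alpha_{E'}(f_0 + I_{E'})$; surjectivity of the induced map from $\varinjlim A_{E'}$ to $A$ follows, and injectivity follows from directedness together with the injectivity of each $\alpha_{E'}$.

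\textbf{Item (4).} The ``if'' direction is an immediate consequence of \textbf{Remark \ref{preserve-re}}: any $\ci$-homomorphism, in particular $\alpha_{E'}$, preserves the canonical relation. For the ``only if'' direction, suppose $a \prec_A b$. Choose $f, g, h, k \in \ci(\R^E)$ such that $a = q_I(f)$, $b = q_I(g)$, $u = q_I(h) \in A^{\times}$ with $g - f - h^2 \in I$, and $hk - 1 \in I$ (the latter witnesses invertibility of $u$). Now pick $E' \in \Lambda$ \emph{large enough} that all four functions $f, g, h, k$ factor through $\mu_{E'E}$, writing $f = \mu_{E'E}(f_0)$, etc. Then $g_0 - f_0 - h_0^2 \in I_{E'}$ and $h_0 k_0 - 1 \in I_{E'}$, so in $A_{E'}$ the element $h_0 + I_{E'}$ is invertible and $(g_0 + I_{E'}) - (f_0 + I_{E'}) = (h_0 + I_{E'})^2$; therefore $f_0 + I_{E'} \prec_{A_{E'}} g_0 + I_{E'}$, and these are the required witnesses with $\alpha_{E'}(f_0 + I_{E'}) = a$, $\alpha_{E'}(g_0 + I_{E'}) = b$.

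\textbf{Main obstacle.} The only step requiring real care is the forward direction of (4): it is not enough to choose $E'$ containing representatives of $a$, $b$ and of the square root $u$, because the \emph{invertibility} of $u$ in $A$ need not descend to invertibility of $h_0 + I_{E'}$ in $A_{E'}$. This is precisely why we must include an explicit inverse-witness $k$ in the set of data to be captured by $E'$; once this is done, the compatibility of colimits with finitely many algebraic conditions makes the rest routine.
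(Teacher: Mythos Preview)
Your argument is correct and follows essentially the same construction as the paper: present $A$ as $\ci(\R^E)/I$ with $I$ $\ci$-radical, index by finite $E' \subseteq E$, set $A_{E'} = \ci(\R^{E'})/\mu_{E'E}^{\dashv}[I]$, and invoke \textbf{Proposition \ref{egito}.(d)} and \textbf{Proposition \ref{rad-pr}.(a)} for item (3). Your treatment of item (4) is in fact more careful than the paper's, which simply asserts the equivalence ``directly from the definition'' without isolating the need for an explicit inverse witness $k$ to make the invertibility of $u$ descend to the finitely generated stage.
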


\begin{proof}

Note that any  $\ci$-reduced  $\ci$-ring can be presented as $A \cong  \ci(\R^E)/I$, where $I = \sqrt[\infty]{I}$ and the latter can be described as a directed colimit of finitely generated $\ci$-reduced $\ci$-rings. Indeed, we have that 
$$\ci(\R^E)/I \cong (\varinjlim_{E' \subseteq_{\rm fin} E} \mathcal{C}^{\infty}(\mathbb{R}^{E'}))/I \cong \varinjlim_{E' \subseteq_{\rm fin} E} (\mathcal{C}^{\infty}(\mathbb{R}^{E'})/\mu_{E',E}^\dashv[I]).$$

 It is clear that $\alpha_{E'} : \ci(\R^{E'})/ \mu_{E',E}^\dashv[I]  \to \ci(\R^E)/I$ is injective, for each $E' \subseteq_{\rm fin} E$. Thus if $E'' \subseteq E'  \subseteq_{\rm fin} E$, then  $\alpha_{E''E'} : \ci(\R^{E''})/ \mu_{E'',E}^\dashv[I]  \to \ci(\R^{E'})/ \mu_{E',E}^\dashv[I]$ is injective too.
 
 We combine the results in {\bf Proposition} \ref{rad-pr}.(a)  and {\bf Proposition} \ref{egito}.(d) to conclude that  $\ci(\R^{E'})/\mu_{E',E}^\dashv[I]$ is a $\ci$-reduced $\ci$-ring. 
 
  Now item (4) follows directly from the definition of canonical relation, since for each $f, g \in \ci(\R^E)$:
  
  $$ \exists u \in \ci(\R^ E) ( (g-f) + I = u^2 +I ; u +I \in (\ci(\R^E)/I)^\times) $$
  
   iff  $ \exists E' \subseteq_{fin} E$, $\exists f',g', u' \in \ci(\R^{E'}), \mu_{E',E}(f') = f, \mu_{E',E}(g') = g,\mu_{E',E}(u') = u$ such that:
  
  $$  (g'-f') + \mu_{E',E}^\dashv[I] = u'^2 + \mu_{E',E}^\dashv[I] \ \text{and} \  u' + \mu_{E',E}^\dashv[I]  \in (\ci(\R^{E'})/\mu_{E',E}^\dashv[I] )^\times ). $$

\end{proof}

We are ready to state and proof the following (very useful) general characterization result of $\prec$:


\begin{theorem}\label{ordem}Let $A = \frac{\ci(\R^E)}{I}$ be a ``general" $\ci-$reduced $\ci-$ring. Let we have:

 $(f+I \prec g + I)\iff ((\exists \varphi \in I)(\forall x \in Z(\varphi))(f(x)<g(x)))$

\begin{proof}

Suppose $f+I \prec g + I$, so there is some $h+I \in \left( \frac{\ci(\mathbb{R}^n)}{I}\right)^{\times}$ such that $g-f + I = h^2 + I$. Since $h+I$ is invertible, by \textbf{Proposition \ref{invertivel2}} there is some $\psi \in I$ such that:

$$(\forall x \in Z(\psi))(h(x)\neq 0)$$

Since $g-f + I = h^2 + I$, by \textbf{Proposition \ref{equacional2}}, there is some $\phi \in I$ such that:

$$(\forall x \in Z(\phi))(g(x)-f(x) = h^2(x)),$$



Taking $\varphi = \phi^2 + \psi^2 \in I$ we have, for every $x \in Z(\psi)\cap Z(\phi) = Z(\varphi)$  both:

$$g(x)-f(x) = h^2(x)$$
and
$$h^2(x) > 0$$

Hence,

$$(\forall x \in Z(\varphi))(f(x)< g(x))$$

Conversely, suppose $f,g \in \ci(\mathbb{R}^E)$ are such that there is some $\varphi \in I$ with satisfying:

$$(\forall x \in Z(\varphi))(f(x)<g(x)).$$

Pick $E' \subseteq_{fin} E$ and $f', g', \varphi' \in \ci(\R^{E'})$ such that $f = \mu_{E'E}(f'), g = \mu_{E'E}(g'), \varphi = \mu_{E'E}(\varphi')$. 

Then $I' := \mu_{E'E}^{\dashv}[I]$ is a $\ci$-radical ideal of $\ci(\R^{E'})$, $\varphi' \in I'$ and  
$$\forall x' \in \R^{E'} ( x' \in Z(\varphi') \to f'(x') < g'(x') ).$$ 

By the finitely generated case  (i.e. {\bf Proposition \ref{mp}}),  
$$f'+I' \prec g'+I'.$$

By (the proof of) {\bf Proposition \ref{saf}.(4)} we obtain

$$f+I \prec g+I.$$

\end{proof}



\end{theorem}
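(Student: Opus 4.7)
The plan is to mirror the argument of \textbf{Proposition \ref{mp}} (the finitely generated case), but replacing the two key inputs -- the characterizations of equalities and invertibles for a finitely generated $\ci$-reduced $\ci$-ring -- with their already-established ``arbitrary $E$'' analogues (\textbf{Proposition \ref{equacional2}} and \textbf{Proposition \ref{invertivel2}}); the converse direction then collapses to the finitely generated case via the directed-colimit presentation provided by \textbf{Proposition \ref{saf}}.

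For the forward direction ($\Rightarrow$), suppose $f+I\prec g+I$. Pick $h \in \ci(\R^E)$ with $(g-f)+I = h^2 + I$ and $h+I \in (\ci(\R^E)/I)^{\times}$. Apply \textbf{Proposition \ref{invertivel2}} to produce $\psi \in I$ with $h(x) \neq 0$ for all $x \in Z(\psi)$, and apply \textbf{Proposition \ref{equacional2}} to produce $\phi \in I$ with $g(x) - f(x) = h^{2}(x)$ for all $x \in Z(\phi)$. Setting $\varphi := \phi^{2} + \psi^{2} \in I$, we have $Z(\varphi) = Z(\phi) \cap Z(\psi)$, so on $Z(\varphi)$ one gets simultaneously $g(x) - f(x) = h^{2}(x)$ and $h^{2}(x) > 0$, giving the required strict inequality.

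For the converse ($\Leftarrow$), assume $\varphi \in I$ with $f(x) < g(x)$ on $Z(\varphi)$. Choose a finite $E' \subseteq_{\mathrm{fin}} E$ and representatives $f', g', \varphi' \in \ci(\R^{E'})$ with $f = \mu_{E'E}(f')$, $g = \mu_{E'E}(g')$, $\varphi = \mu_{E'E}(\varphi')$. Then $I' := \mu_{E'E}^{\dashv}[I]$ is a $\ci$-radical ideal of $\ci(\R^{E'})$ (by \textbf{Proposition \ref{egito}.(d)}) and $\varphi' \in I'$. Since $Z(\varphi) = \pi_{EE'}^{\dashv}[Z(\varphi')]$ and $\pi_{EE'}$ is surjective, the pointwise inequality transfers to $f'(x') < g'(x')$ for every $x' \in Z(\varphi')$. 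Invoking the finitely generated case (\textbf{Proposition \ref{mp}}) yields $f' + I' \prec g' + I'$ in $\ci(\R^{E'})/I'$, and then the colimit description of the canonical relation (\textbf{Proposition \ref{saf}.(4)}, or more conceptually \textbf{Remark \ref{preserve-re}} applied to $\alpha_{E'} \colon \ci(\R^{E'})/I' \to \ci(\R^E)/I$) delivers $f + I \prec g + I$.

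The only delicate point is the transfer of the strict inequality from $\R^E$ down to $\R^{E'}$: one needs that the inequality ``lives'' on the right zero set, which relies precisely on $Z(\varphi) = \pi_{EE'}^{\dashv}[Z(\varphi')]$ together with the surjectivity of $\pi_{EE'}$ (so that every $x' \in Z(\varphi')$ is the restriction of some $x \in Z(\varphi) \subseteq \R^E$ and hence $f'(x') = f(x) < g(x) = g'(x')$). Everything else is a routine bookkeeping exercise of compatibility of $\mu$'s with the finite intersections needed when combining $\phi$ and $\psi$ into $\varphi$.
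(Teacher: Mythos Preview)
Your proposal is correct and follows essentially the same approach as the paper's own proof: the forward direction via \textbf{Propositions \ref{equacional2}} and \textbf{\ref{invertivel2}} combined through $\varphi=\phi^2+\psi^2$, and the converse by descending to a finite $E'\subseteq E$, invoking \textbf{Proposition \ref{mp}}, and lifting back with \textbf{Proposition \ref{saf}.(4)}. Your explicit justification of the inequality transfer via surjectivity of $\pi_{EE'}$ is a nice clarification that the paper leaves implicit.
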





Now, having available a characterization of the canonical relation $\prec$, we can establish many of its  properties.

\begin{proposition}\label{transitive}Let $A$ be any $\ci$-reduced $\mathcal{C}^{\infty}-$ring.  The  canonical relation $\prec_A$ is transitive.

\end{proposition}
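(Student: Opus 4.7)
My plan is to reduce to the spatial characterisation of $\prec$ given by \textbf{Theorem \ref{ordem}}, which in my view is by far the most economical route. Since $A$ is a non-trivial $\ci$-reduced $\ci$-ring, write $A \cong \ci(\R^E)/I$ where $I = \sqrt[\infty]{I}$ (every $\ci$-ring is a quotient of a free one, and $\ci$-reducedness forces the defining ideal to be $\ci$-radical, by \textbf{Proposition \ref{rad-pr}}). Let $a,b,c \in A$ with $a \prec b$ and $b \prec c$, and pick representatives $f,g,h \in \ci(\R^E)$ with $a = f+I$, $b = g+I$, $c = h+I$.

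Applying \textbf{Theorem \ref{ordem}} to each of the two hypothesised relations produces $\varphi_1,\varphi_2 \in I$ such that
$$(\forall x \in Z(\varphi_1))(f(x) < g(x)) \quad \text{and} \quad (\forall x \in Z(\varphi_2))(g(x) < h(x)).$$
The natural way to merge these two conditions is to combine the witnesses into a single element of $I$ whose zeroset is the intersection of $Z(\varphi_1)$ and $Z(\varphi_2)$. The standard trick is to set $\varphi := \varphi_1^2 + \varphi_2^2$, which lies in $I$ (since $I$ is an ideal) and satisfies $Z(\varphi) = Z(\varphi_1) \cap Z(\varphi_2)$, because a sum of squares of real numbers vanishes iff each summand does.

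On this common zeroset both pointwise inequalities hold simultaneously, so $f(x) < g(x) < h(x)$ and hence $f(x) < h(x)$ for every $x \in Z(\varphi)$. Invoking \textbf{Theorem \ref{ordem}} once more in the reverse direction gives $a = f+I \prec h + I = c$, as required.

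The only conceptually delicate point is that a direct attack using \textbf{Definition \ref{prec-def}} would force one to realise $u^2 + v^2$ as $w^2$ for some $w \in A^{\times}$, i.e.\ to produce a smooth square root of a sum of squares of invertibles, and to verify its invertibility modulo $I$. This is exactly the obstruction circumvented by the characterisation theorem (via \textbf{Smooth Tietze}, \textbf{Fact \ref{obs}}), so having \textbf{Theorem \ref{ordem}} already in place reduces the proof of transitivity to the purely real statement that ``$<$'' on $\R$ is transitive together with the elementary observation $Z(\varphi_1^2 + \varphi_2^2) = Z(\varphi_1) \cap Z(\varphi_2)$.
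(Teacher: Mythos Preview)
Your proof is correct and follows essentially the same approach as the paper: present $A$ as $\ci(\R^E)/I$ with $I$ $\ci$-radical, apply \textbf{Theorem \ref{ordem}} to obtain witnesses $\varphi_1,\varphi_2\in I$, combine them via $\varphi_1^2+\varphi_2^2$ to intersect the zerosets, and apply \textbf{Theorem \ref{ordem}} once more. The only (harmless) discrepancy is that you assume $A$ non-trivial, which the statement does not require; in the trivial case transitivity holds vacuously, so the argument is unaffected.
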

\begin{proof}
The $\ci$-reduced  $\ci$-ring $A$ can be presented as $A \cong  \ci(\R^E)/I$, for some set $E$ and some $\ci$-radical ideal  $I = \sqrt[\infty]{I} \subseteq \ci(\R^E)$. Thus let $f, g, h \in \ci(\R^E)$ be such that $f+I \prec g+I \prec h+I$. 

Now apply the characterization {\bf Theorem \ref{ordem}} and consider $\alpha, \beta \in I$ such that
$f(x) < g(x), \forall x \in Z(\alpha)$ and $g(x) < h(x), \forall  x \in Z(\beta)$.

Then $\gamma := \alpha^2 + \beta^2 \in I$ and $Z(\gamma) = Z(\alpha)\cap Z(\beta)$.

Thus $f(x) < g(x) <h(x), \forall x \in Z(\gamma)$ and since $\gamma \in I$, applying again the {\bf Theorem \ref{ordem}} we obtain $f+I \prec h+I$.
\end{proof}

\begin{proposition} \label{asymmetric} Let $A$ be any non-trivial $\ci$-ring and $a,b \in A$. Then  the relation $\prec_A$  is asymmetric, i.e. it holds at most one of the following conditions: $a \prec b$,  $b \prec a$.
\end{proposition}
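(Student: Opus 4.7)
The plan is a direct contradiction argument that bypasses transitivity (which in this section is only available in the $\ci$-reduced case). Assume both $a \prec_A b$ and $b \prec_A a$, and pick invertible witnesses $u, v \in A^{\times}$ with $b - a = u^2$ and $a - b = v^2$. Adding the two equalities yields $u^2 + v^2 = 0$. Since $v$ is invertible, so is $v^2$, and hence $w := u \cdot v^{-1} \in A^{\times}$ satisfies $w^2 = u^2 \cdot v^{-2} = -v^2 \cdot v^{-2} = -1$. In particular, $1 + w^2 = 0$ in $A$.

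The crucial step is to deduce, from the $\ci$-ring structure alone, that $1 + w^2$ is invertible for every $w \in A$; once this is known, combining it with $1 + w^2 = 0$ forces $1 = 0$, contradicting the non-triviality of $A$. For this I would consider $f \in \ci(\R, \R)$ defined by $f(x) = (1 + x^2)^{-1}$. The identity $f(x) \cdot (1 + x^2) = 1$ holds in $\ci(\R, \R)$, and by the $\ci$-ring axiom (\textbf{Definition \ref{CravoeCanela}}) its interpretation in $A$ yields $f^A(w) \cdot (1 + w^2) = 1$. Hence $1 + w^2 \in A^{\times}$, which directly contradicts $1 + w^2 = 0$ in a non-trivial ring.

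I do not anticipate any serious obstacle. The only conceptually important point is that the invertibility of $1 + w^2$ is genuinely a $\ci$-ring phenomenon: in a plain commutative ring $-1$ may well be a sum of two squares of units, so the analogous argument would fail. What rescues it here is that $\ci$-rings interpret \emph{every} smooth function on $\R$, not merely polynomials, which is precisely what allows one to exhibit an inverse of $1 + w^2$ via the smooth function $(1 + x^2)^{-1}$.
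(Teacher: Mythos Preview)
Your argument is correct, and it takes a genuinely different route from the paper's. The paper simply observes that if $a \prec b$ and $b \prec a$ then by transitivity (\textbf{Proposition \ref{transitive}}) one would have $a \prec a$, contradicting irreflexivity (\textbf{Proposition \ref{irreflexive}}). Your approach instead extracts from the two witnesses $u,v \in A^{\times}$ the equation $1 + w^2 = 0$ for $w = uv^{-1}$, and then uses the smooth function $x \mapsto (1+x^2)^{-1}$ and the $\ci$-ring axioms to force $1+w^2 \in A^{\times}$, hence $0=1$.

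The comparison is worth noting. The paper's proof is shorter but leans on \textbf{Proposition \ref{transitive}}, whose proof in this section is carried out only for $\ci$-\emph{reduced} $\ci$-rings; so, as written, the paper's argument establishes asymmetry only under that extra hypothesis, even though the proposition is stated for an arbitrary non-trivial $\ci$-ring. Your argument, by contrast, uses nothing beyond the $\ci$-ring axioms and non-triviality, so it proves exactly the statement as announced. In effect you are invoking (and reproving in the special case $n=1$) the weak bounded inversion property $1 + \sum A^2 \subseteq A^{\times}$, which the paper records later as \textbf{Corollary \ref{boundedinv}}; your observation shows that this particular instance needs no reducedness assumption.
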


\begin{proof} Suppose that holds simultaneously both the conditions: $a \prec b$, $b \prec a$. Since $\prec$ is transitive ({\bf Proposition \ref{transitive}}) we have $a \prec a$, but this contradicts {\bf Proposition \ref{irreflexive}},  since $A$ is non-trivial. 

\end{proof}



By a combination of \textbf{Propositions} \ref{irreflexive}, \ref{transitive} (and {\ref{asymmetric}}), the canonical relation $\prec$ on every $\ci$-ring $A$ that is non-trivial and $\ci$-reduced is is irreflexive,  transitive (and asymmetric) bynary relation on $A$ : thus it defines a strict partial order. This motivates the following: 




\begin{definition} \label{parcial-def} Let $A$ be a non-trivial, $\ci$-reduced $\ci-$ring. Then the canonical bynary relation  on $A$, $\prec_A$, ({\bf Definition \ref{prec-def}}) will be  called the ``canonical strict partial order on $A$''.
\end{definition}

Moreover, by {\bf Proposition \ref{compatibilidade}}, it holds:

\begin{theorem} \label{order-teo} Let $A$ be any non-trivial $\ci-$reduced $\ci-$ring. The canonical partial order on $A$, $\prec_A$, is compatible with the sum and with the product on $A$.
\end{theorem}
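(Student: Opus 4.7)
The plan is to invoke \textbf{Proposition \ref{compatibilidade}} directly, since that result already establishes the compatibility of the canonical binary relation $\prec_A$ with both $+$ and $\cdot$ for an arbitrary $\ci$-ring $A$, with no hypothesis of non-triviality or $\ci$-reducedness required. The role of the extra hypotheses in the present theorem is only to ensure that $\prec_A$ actually deserves to be called a \emph{canonical strict partial order} (cf. \textbf{Definition \ref{parcial-def}}): non-triviality gives irreflexivity via \textbf{Proposition \ref{irreflexive}}, $\ci$-reducedness gives transitivity via \textbf{Proposition \ref{transitive}} (which passes to a presentation $A \cong \ci(\R^E)/I$ with $I = \sqrt[\infty]{I}$ and uses the spatial characterization of \textbf{Theorem \ref{ordem}}), and asymmetry then follows from \textbf{Proposition \ref{asymmetric}}.

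Concretely, I would recall the content of \textbf{Proposition \ref{compatibilidade}}: if $a \prec_A b$ with witness $b-a=c^2$, $c \in A^{\times}$, then for any $x \in A$ we have $(b+x)-(a+x)=c^2$, hence $a+x \prec_A b+x$; and if additionally $0 \prec_A x$ with witness $x=d^2$, $d \in A^{\times}$, then $bx-ax = (b-a)x = (cd)^2$ with $cd \in A^{\times}$, whence $ax \prec_A bx$. Under the standing hypotheses this is precisely the required compatibility of the canonical strict partial order with $+$ and $\cdot$.

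There is no genuine obstacle here: the theorem is essentially a repackaging of \textbf{Proposition \ref{compatibilidade}} together with \textbf{Definition \ref{parcial-def}}, once Propositions \ref{irreflexive}, \ref{transitive}, and \ref{asymmetric} have legitimized the use of the term ``strict partial order''. In particular, no appeal to the spatial characterization provided by \textbf{Theorem \ref{ordem}} is needed for the compatibility statement itself; that characterization was only needed to upgrade $\prec_A$ from a compatible binary relation to a bona fide strict partial order.
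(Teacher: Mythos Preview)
Your proposal is correct and follows exactly the paper's approach: the theorem is stated immediately after the sentence ``Moreover, by \textbf{Proposition \ref{compatibilidade}}, it holds:'', so the paper's proof is nothing more than a direct invocation of that proposition. Your additional explanation of why the non-triviality and $\ci$-reducedness hypotheses are only needed to justify calling $\prec_A$ a strict partial order (via \textbf{Definition \ref{parcial-def}} and Propositions \ref{irreflexive}, \ref{transitive}, \ref{asymmetric}) is accurate and even more explicit than what the paper writes.
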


Note that, due to the above result, to prove the trichotomy of $\prec$ it suffices to prove that holds the ``restricted form of trichotomy'': given any $a \in A$ one has either $a=0$, $a \prec 0$ or $0 \prec a$. But, clearly, this is not true in general:

\begin{example} Let $A = \ci(\R^n)$ and consider the $\ci$-function $f(x_1, \cdots, x_n) := e^{(x_1+ \cdots+x_n)} -1$. 

If $x_1+ \cdots+x_n   \underset{<}{\overset{>}{=}}  0$, then $f(x_1, \cdots, x_n)  \underset{<}{\overset{>}{=}}  0$.

\end{example}

On the other hand, the restricted trichotomy holds for invertible members of some classes of $\ci$-reduced $\ci$-ring:

\begin{proposition}\label{d}Given a $\ci$-reduced $\ci$-ring, $A$, one has:
$$A^{\times} = (A^{\times})^2 \cup (-A^{\times})^2$$
provided $A$ satisfies some of the conditions below:
\begin{enumerate}
    \item $A$ is a free $\ci$-ring; 
    \item $A$ is a $\ci$-reduced $\ci$-domain.
\end{enumerate}

\end{proposition}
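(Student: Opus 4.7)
The plan is to treat the two hypotheses separately, observing that the inclusion $(A^\times)^2 \cup (-A^\times)^2 \subseteq A^\times$ is automatic (squares of units are units, and $-1$ is a unit in any nonzero ring), so the work lies entirely in the reverse inclusion: every invertible has the form $\pm v^2$ with $v$ invertible.

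For case (1), suppose $A = \ci(\R^E)$. An invertible $u$ must be nowhere vanishing on $\R^E$, and by the definition of $\ci(\R^E)$ factors as $u = \widetilde{u} \circ \pi_{EE'}$ for some finite $E' \subseteq E$ with $\widetilde{u} \in \ci(\R^{E'})$ nowhere vanishing (since $\pi_{EE'}$ is surjective). The key observation is that $\R^{E'}$ is connected, so continuity forces either $\widetilde{u} > 0$ or $\widetilde{u} < 0$ throughout. In the first case $\sqrt{\widetilde{u}}$ is a smooth nowhere-vanishing (hence invertible) function and $u = (\sqrt{\widetilde{u}} \circ \pi_{EE'})^2 \in (A^\times)^2$; in the second case the same argument applied to $-\widetilde{u}$ places $u$ in $-(A^\times)^2$.

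For case (2), write $A \cong \ci(\R^E)/I$ where $I$ is $\ci$-radical (by \textbf{Proposition \ref{rad-pr}.(b)}, since $A$ is $\ci$-reduced) and prime (since $A$ is a domain); then by \textbf{Proposition \ref{lerigo}.(c)}, $\widehat{I}$ is a prime filter of zerosets. Given $u = f + I \in A^\times$, \textbf{Proposition \ref{invertivel2}} yields $\varphi \in I$ with $f$ nowhere vanishing on $Z(\varphi)$. Descend via factorization $f = \mu_{E'E}(f'), \varphi = \mu_{E'E}(\varphi')$ to some finite $E' \subseteq_{\rm fin} E$: the ideal $I' := \mu_{E'E}^\dashv[I]$ is $\ci$-radical (\textbf{Proposition \ref{egito}.(d)}) and prime (since $\ci(\R^{E'})/I'$ injects into the domain $A$ via $\mu_{E'E}$), so $\widehat{I'}$ is a prime filter. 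Using \textbf{Fact \ref{char}}, express the closed sets $\{f'\geq 0\} = Z(\chi_+)$ and $\{f'\leq 0\} = Z(\chi_-)$ for smooth $\chi_\pm$, giving the decomposition
$$Z(\varphi') \;=\; Z\bigl((\varphi')^2 + \chi_+^2\bigr) \,\cup\, Z\bigl((\varphi')^2 + \chi_-^2\bigr)$$
into two zerosets whose union lies in $\widehat{I'}$. Primality forces one to belong to $\widehat{I'}$; without loss of generality $Z_+ := Z(\varphi') \cap \{f' \geq 0\} \in \widehat{I'}$. Since $f'$ is nonzero on $Z(\varphi') \supseteq Z_+$, we actually have $f' > 0$ strictly on $Z_+$, so $\sqrt{f'}$ is smooth and strictly positive on some open neighborhood of $Z_+$; the positive-extension variant of \textbf{Smooth Tietze} (\textbf{Fact \ref{obs}}) then produces a strictly positive $v' \in \ci(\R^{E'})^\times$ agreeing with $\sqrt{f'}$ on $Z_+$. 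Thus $(v')^2 = f'$ on $Z_+$, and since $Z_+ \in \widehat{I'}$, \textbf{Proposition \ref{equacional}} delivers $(v')^2 + I' = f' + I'$; lifting via $\mu_{E'E}$ gives $u = (v+I)^2$ with $v := \mu_{E'E}(v')$ invertible modulo $I$. The symmetric case $Z_- \in \widehat{I'}$ places $u$ in $-(A^\times)^2$.

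The principal obstacle is case (2): one must coordinate three independent tools — the prime-filter dictionary for prime $\ci$-radical ideals, the descent-to-finite-$E'$ bookkeeping (which must simultaneously preserve primality, $\ci$-radicality, invertibility, and the zeroset of $\varphi$), and the positive variant of Smooth Tietze needed to turn a pointwise-defined $\sqrt{f'}$ on the closed set $Z_+$ into a genuinely invertible global smooth function — rather than merely a smooth one whose zeros could destroy invertibility in the quotient.
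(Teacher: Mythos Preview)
Your proof is correct and follows essentially the same approach as the paper: in case~(2) both arguments descend to a finite $E'$, use the primality of the filter $\widehat{I'}$ to split $Z(\varphi')$ along the sign of $f'$ via the closed sets $\{\pm f' \geq 0\} = Z(\chi_\pm)$, and then conclude. The only (cosmetic) difference is the final step: you construct the square root explicitly via the positive Tietze extension and then invoke Proposition~\ref{equacional} at the $E'$-level, whereas the paper lifts $(\varphi')^2 + \chi_\pm^2$ back to $I \subseteq \ci(\R^E)$ and invokes Theorem~\ref{ordem} directly --- which internally packages exactly the same Tietze construction.
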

\begin{proof}
This hols trivially if $0 = 1$. 
We will prove that for a non trivial $\ci$-reduced $\ci$-ring, $A$,  the non obvious inclusion: $A^{\times} \subseteq (A^{\times})^2 \cup (-A^{\times})^2$ holds.

Item (1): First recall that any free $\ci$-ring is $\ci$-reduced (see {\bf Example \ref{free}}). Let $f \in \ci(\R^E)^\times$, then there is $E' \subseteq_{\rm fin} E$ and $f' \in \ci(\R^{E'})^\times $ such that $f = \mu_{E'E}(f') = f' \circ \pi_{EE'}$. Since $f' : \R^{E'} \to \R$ is continuous and $\R^{E'}$ is connected, then ${\rm range}(f) = {\rm range}(f')$ is a connected subset of $\R$, so it is an interval. Since $0 \notin {\rm range}(f)$, then exactly one of the following alternatives holds: (i) ${\rm range}(f) \subseteq ]-\infty, 0[$ or (ii) ${\rm range}(f) \subseteq ]0, \infty[$. If (i) holds then $f \in -(\ci(\R^E)^\times)^2 $ and if (ii) holds then $f \in (\ci(\R^E)^\times)^2 $.\\

Item (2):  We take a presentation of $A$ as $A \cong \ci(\R^E)/I$, for some set $E$ and some  (ring theoretical) ideal $I \in {\rm Spec}^\infty(A)$.   Let $f \in \ci(\R^E)$ be such that  $(f+I  \in \dfrac{\ci(\R^E)}{I}^{\times})$. By {\bf Proposition \ref{invertivel2}}, there exists $\varphi \in I$ such that: 
$$(\forall x \in \R^{E})(x \in Z(\varphi) \to f(x) \neq 0 ).$$

Let $E' \subseteq_{\rm fin} E$ be such that $\varphi = \varphi' \circ \pi_{EE'}$ and $f = f' \circ \pi_{EE'}$, for some $\varphi', f' \in \ci(\R^{E'})$.  Then

$$(\forall x' \in \R^{E'})(x' \in Z(\varphi') \to f'(x') \neq 0 ).$$

Thus,

$$Z(\varphi') \subseteq [f'>0] \cup [-f'>0],$$

\noindent where: $[\pm f' > 0 ] := \{ x' \in \R^{E'} : \pm f'(x') > 0\}$.

Note that:

$$ Z(\varphi') \cap [\pm f' \geq 0] = Z(\varphi') \cap [\pm f'>0] $$

Since $f'$ is a continuous function, $[\pm f' \geq 0] = (\pm f')^{\dashv}[[0, \infty[ ]$ is a closed subset of $\R^{E'}$, and by {\bf Fact \ref{char}},  there is some  $\chi_{\pm} \in \ci(\R^{E'})$ such that 

$$Z (\chi_{\pm}) = [ \pm f' \geq 0].$$

Thus,

$$Z(\varphi') = Z(\varphi') \cap \R^{E'} =  Z(\varphi') \cap (Z(\chi_-) \cup Z(\chi_{+})) =$$

$$ = (Z(\varphi') \cap Z(\chi_{-})) \cup (Z(\varphi') \cap Z(\chi_+)) = Z({\varphi'}^2 + \chi_{-}^2) \cup Z({\varphi'}^2 + \chi_+^2).$$

Since $I$ is a $\ci$-radical (proper) prime ideal of $\ci(\R^E)$, then $I':= \mu_{E'E}^\dashv[I] \subseteq \ci(\R^{E'})$ is a  $\ci$-radical (proper) prime ideal of $\ci(\R^{E'})$ (see {\bf Proposition \ref{egito}.(d)}). 

By {\bf Proposition \ref{lerigo}}, $I'$ corresponds to a prime filter (of zero sets) $\widehat{I'}$ and since
$Z({\varphi'}^2 + \chi_{-}^2) \cup Z({\varphi'}^2 + \chi_+^2) = Z(\varphi')$
 and $\varphi' \in I'$, then some of the subsets $ Z({\varphi'}^2 + \chi_{-}^2),  Z({\varphi'}^2 + \chi_+^2)$ belongs to the $\ci$-radical $I'$.
 By {\bf Proposition \ref{lapa}}:
$\widecheck{\widehat{I'}} = \sqrt[\infty]{I'} = I'$, thus some of the functions $({\varphi'}^2 + \chi_{-}^2),  ({\varphi'}^2 + \chi_+^2)$ belongs to  $I'$.

Now recall that:
 
 $$ Z({\varphi'}^2 + \chi_{\pm}^2) = Z(\varphi') \cap Z(\chi_{\pm})  = Z(\varphi') \cap [\pm f' \geq 0] =   Z(\varphi') \cap [\pm f' > 0] $$
 
 and consider $\alpha_\pm := ({\varphi'}^2 + \chi_{\pm}^2) \circ \pi_{EE'} \in \ci(\R^E).$\\
 
 Then some of the alternatives holds:\\
 
 (i) $\alpha_- \in I$ and $(\forall x \in  Z(\alpha_-)) (-f(x) >0)$; \\
 
 (ii)  $\alpha_+ \in I$ and $(\forall x \in  Z(\alpha_+)) (f(x) >0)$. \\




Applying {\bf Theorem \ref{ordem}}, if (i) holds then $f + I \in  -((\ci(\R^E)/I)^\times)^2 $ and if (ii) holds then $f +I  \in ((\ci(\R^E)/I)^\times)^2$.

This establishes the desired inclusion:

$$A^{\times}  \subseteq  (A^{\times})^2
\overset{.}{\cup} 
(-A^{\times})^2.$$




\end{proof}

There is another natural way to consider that a partial order $\preceq$ (where $a \preceq b$ iff $a \prec b$ or $a = b$)  is compatible with sums: if $0 \preceq x$ and $0 \preceq y$, then $0 \preceq x+y$. This one also holds, as it follows (directly) from the results obtained below.

\begin{proposition}\label{b}Given any $\ci$-reduced $\ci$-ring $A$, denote by $(A^\times)^2 = A^2 \cap A^{\times} = (A^2)^\times$. Then the following hold:

\begin{enumerate}
    \item $\left( \sum A^2 \right) \cap A^{\times} = (A^\times)^2$
    
    \item  $(A^\times)^2 + \sum A^2 = (A^\times)^2$
    
    \item  $\sum (A^{\times})^2 = (A^{\times})^2$
\end{enumerate}

\end{proposition}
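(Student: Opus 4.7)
The plan is to establish (1), (2), (3) in turn, with (1) doing the main work and (2), (3) following as easy consequences. For (1), the inclusion $(A^\times)^2 \subseteq (\sum A^2) \cap A^\times$ is immediate since any element $u^2$ with $u \in A^\times$ is both a square and invertible. For the reverse inclusion, I would fix a presentation $A \cong \ci(\R^E)/I$ with $I = \sqrt[\infty]{I}$, which is always available for a $\ci$-reduced $\ci$-ring, and write a given element of $(\sum A^2) \cap A^\times$ as $F + I$ where $F = f_1^2 + \cdots + f_n^2 \in \ci(\R^E)$. Since $F + I$ is invertible, \textbf{Proposition \ref{invertivel2}} produces $\varphi \in I$ with $F(x) \neq 0$ for every $x \in Z(\varphi)$; but $F$ is a pointwise sum of real squares, so $F(x) \geq 0$ everywhere, and therefore $F(x) > 0$ on $Z(\varphi)$. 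Applying \textbf{Theorem \ref{ordem}} with $f = 0, g = F$, this yields $0 + I \prec F + I$, and unwinding the definition of $\prec$ gives $F + I = w^2 + I$ for some $w \in \ci(\R^E)$ with $w + I \in A^\times$, i.e., $F + I \in (A^\times)^2$.

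For (2), one inclusion is trivial because $0 \in \sum A^2$. For the other, let $u^2 + s$ with $u + I \in A^\times$ and $s \in \sum A^2$; writing $s$ as $a_1^2 + \cdots + a_m^2$, the whole expression $u^2 + s$ is itself a sum of squares. To conclude via (1), I need only verify that $u^2 + s$ is invertible. Using \textbf{Proposition \ref{invertivel2}} on $u + I \in A^\times$, pick $\psi \in I$ with $u$ nonvanishing on $Z(\psi)$; then on $Z(\psi)$ one has $u^2(x) + s(x) \geq u^2(x) > 0$, and \textbf{Proposition \ref{invertivel2}} in the converse direction yields $u^2 + s \in A^\times$. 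Applying (1), this element lies in $(A^\times)^2$.

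For (3), $(A^\times)^2 \subseteq \sum (A^\times)^2$ is obvious (taking a single-term sum); conversely, a sum $u_1^2 + \cdots + u_n^2$ with all $u_i \in A^\times$ can be grouped as $u_1^2 + (u_2^2 + \cdots + u_n^2)$ where the second piece lies in $\sum A^2$, so (2) places the total in $(A^\times)^2$. The only real obstacle lies in (1), and it is not an obstacle so much as the key observation: the passage from ``$F$ is invertible modulo $I$'' (nonvanishing on some $Z(\varphi)$) to ``$0 \prec F$'' (strict positivity on some $Z(\varphi)$) is automatic precisely because finite sums of real squares are nonnegative, a fact that is invisible at the purely algebraic level but transparent after moving to the spatial characterizations of Section 2.
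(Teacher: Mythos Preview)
Your proof is correct and follows the same approach as the paper: present $A \cong \ci(\R^E)/I$, use the spatial characterizations (Proposition~\ref{invertivel2} and Theorem~\ref{ordem}) to turn invertibility plus nonnegativity into strict positivity on some $Z(\varphi)$, and then read off $0 \prec F+I$. Your version is marginally more streamlined---in (1) you lift directly to a sum-of-squares representative $F$ in $\ci(\R^E)$ (so $F \geq 0$ pointwise is automatic, bypassing the paper's separate appeal to Proposition~\ref{equacional2}), and in (2) you first establish invertibility and then invoke (1), whereas the paper reruns the positivity argument from scratch---but these are minor organizational choices within the same strategy.
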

\begin{proof}
 {\bf First equality:}

One easily checks that:
$$\left( \sum A^2 \right) \cap A^{\times} \supseteq A^2 \cap A^{\times},$$

\noindent so we only need to prove the opposite inclusion.

We know that $A \cong \dfrac{\ci(\R^E)}{I}$ for some  set $E$  and some $\ci$-radical ideal $\sqrt[\infty]{I} = I \subseteq \ci(\R^E)$. Let $f \in \ci(\R^E)$ be such that $q_I(f)= f+I \in \left( \sum \dfrac{\ci(\R^E)}{I}^2 \right) \cap \dfrac{\ci(\R^E)}{I}^{\times}$.  Since $q_I(f) \in \left(\dfrac{\ci(\R^E)}{I}\right)^{\times}$, by  {\bf Proposition \ref{invertivel2}}, there is some $\varphi \in I$ such that:
$$(\forall x \in Z(\varphi))(f(x)\neq 0)$$
\noindent and since $q_I(f) \in \sum \dfrac{\ci(\R^E)}{I}^2$, by  {\bf Proposition \ref{equacional2}}, there are $f_1, \cdots, f_k \in \ci(\R^E)$ and $\psi \in I$ such that:

$$(\forall x \in Z(\psi))(f(x)=f_1(x)^2+ \cdots + f_k^2(x) \geq 0),$$

Thus $\varphi^2 + \psi^2 \in I$ and

$$(\forall x \in Z(\varphi^2+\psi^2) = Z(\varphi)\cap Z(\psi)) (f(x) = f_1^2(x)+\cdots + f_k^2(x)>0).$$

Applying {\bf Theorem \ref{ordem}}, we have: 
$$ 0 + I \prec f+I,$$
so $f +I = u^2 +I$, for some $u \in \left(\dfrac{\ci(\R^E)}{I}\right)^{\times}$, establishing the equality in item (1).\\

{\bf Second and third equalities}: 

One easily checks that:
$$  (A^\times)^2 + \sum A^2 \supseteq \sum (A^{\times})^2 \supseteq (A^\times)^2,$$

\noindent so, to establish the items  (2) and (3),  we only need to prove that 

$$  (A^\times)^2 + \sum A^2 \subseteq  (A^\times)^2.$$

Present $A$ as $A \cong \dfrac{\ci(\R^E)}{I}$ for some  set $E$  and some $\ci$-radical ideal $\sqrt[\infty]{I} = I \subseteq \ci(\R^E)$. Let $f \in \ci(\R^E)$ such that $q_I(f)= f+I \in    {\left(\dfrac{\ci(\R^E)}{I}^\times \right)}^2 +  \left( \sum \dfrac{\ci(\R^E)}{I}^2 \right)$. I.e., there are $g, h_1, \cdots, h_k \in \ci(\R^E)$ such that $f+I = (g^2 + h_1^2 + \cdots h_k^2) + I$ and $g+I \in {\left(\dfrac{\ci(\R^E)}{I}^\times \right)}$.

Applying {\bf Proposition \ref{equacional2}} and {\bf Theorem \ref{invertivel2}}, we conclude that there is $\theta \in I$ such that:

    $$(\forall x \in Z(\theta))(f(x)= g(x)^2 + h_1(x)^2+ \cdots + h_k^2(x)\ \text{and} \ g(x) \neq 0 )$$
    
    Thus $$ (\forall x \in Z(\theta))(f(x) >0).$$

Since $\theta \in I$, applying {\bf Theorem \ref{ordem}}, we have: 
$$ 0 + I \prec f+I,$$
so $f +I = u^2 +I$, for some $u \in \left(\dfrac{\ci(\R^E)}{I}\right)^{\times}$, establishing the desired inclusion.

\end{proof}

From the second equality above, it follows directly the:

\begin{corollary} \label{boundedinv} Every $\ci$-ring  $A$ has the ``weak bounded inversion property'' (definition 7.1. in \cite{DM}), i.e. $1 + \sum A^2 \subseteq A^\times$. 

\end{corollary}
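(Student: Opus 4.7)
The plan is to unpack \textbf{Proposition \ref{b}}(2) from the previous result, using only the trivial observation that $1 = 1^2$ lies in $(A^\times)^2$. For a $\ci$-reduced $\ci$-ring $A$, the second equality reads $(A^\times)^2 + \sum A^2 = (A^\times)^2$, so combining with $1 \in (A^\times)^2$ yields the chain
\[
1 + \sum A^2 \;\subseteq\; (A^\times)^2 + \sum A^2 \;=\; (A^\times)^2 \;\subseteq\; A^\times,
\]
which is exactly the weak bounded inversion property in this case. This is the ``direct'' derivation the paper alludes to.

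For a \emph{general} $\ci$-ring $A$ (not necessarily $\ci$-reduced, as the corollary is stated), I would instead invoke the $\ci$-ring structure itself, bypassing the order machinery. Given $s_1, \ldots, s_n \in A$, the function $g(x_1, \ldots, x_n) := (1 + x_1^2 + \cdots + x_n^2)^{-1}$ belongs to $\ci(\R^n)$, since $1 + \sum_i x_i^2 \geq 1 > 0$ everywhere on $\R^n$ makes the pointwise reciprocal smooth. In $\ci(\R^n)$ the equation $(1 + \sum_i x_i^2) \cdot g(\vec{x}) = 1$ holds as an identity of smooth functions, hence by the $\ci$-ring axioms of \textbf{Definition \ref{CravoeCanela}} its interpretation in $A$ gives
\[
\Bigl(1 + \sum_{i=1}^n s_i^2\Bigr) \cdot \Phi_A(g)(s_1, \ldots, s_n) = 1_A,
\]
so $1 + \sum_{i=1}^n s_i^2 \in A^\times$, as desired.

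There is essentially no obstacle here: the ``reduced'' route is a one-line consequence of the preceding proposition, and the route for arbitrary $\ci$-rings is an immediate application of the definition of a $\ci$-ring applied to the smooth reciprocal of a strictly positive smooth function. The only (very mild) subtlety worth recording is that the two proofs have quite different flavours --- the first runs through the ordering $\prec$ and the positivity characterisation of \textbf{Theorem \ref{ordem}}, whereas the second is purely equational in the algebraic theory of $\ci$-rings and makes no reference to $\ci$-reducedness.
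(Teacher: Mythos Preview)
Your first argument, for $\ci$-reduced $A$, is exactly the paper's: the line after \textbf{Proposition \ref{b}} reads ``From the second equality above, it follows directly'', and your chain $1 + \sum A^2 \subseteq (A^\times)^2 + \sum A^2 = (A^\times)^2 \subseteq A^\times$ is precisely that direct derivation.

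Your second argument, however, is genuinely different and in fact stronger than what the paper offers. The paper's route goes through \textbf{Proposition \ref{b}}(2), whose proof uses the characterisation of $\prec$ in \textbf{Theorem \ref{ordem}} and hence requires $A$ to be $\ci$-reduced; strictly speaking the paper only establishes the corollary under that hypothesis, even though the statement reads ``every $\ci$-ring''. Your equational argument --- interpreting the smooth identity $(1+\sum_i x_i^2)\cdot(1+\sum_i x_i^2)^{-1}=1$ in $A$ via the $\ci$-ring axioms --- needs no reducedness, no presentation $A\cong\ci(\R^E)/I$, and no order theory at all, so it delivers the corollary in the full generality claimed. The trade-off is that the paper's route exhibits the result as part of the order-theoretic picture being developed in Section 3, whereas yours is a self-contained one-liner from the universal-algebraic definition of $\ci$-ring.
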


\section{On the order theory of {$\ci$-fields} and applications}
         	

In this section we use the results on the previous section to continue the development of  the order theory of $\ci$-fields initiated in \cite{rings1} and to apply these results to introduce another aspect of the order theory of general $\ci$-rings.

We start  presenting some  results  concerning   $\ci$-fields that will be useful in the sequel.


\begin{proposition} \label{fieldfg-prop} Let $A$ be a $\mathcal{C}^{\infty}-$field. If $A$ is a finitely generated $\ci$-ring, then $\R \cong A$.
 \end{proposition}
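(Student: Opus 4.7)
My approach is to realize $A$ as a quotient $A \cong \ci(\R^n)/\mathfrak{m}$ for some $n \in \N$ and some maximal ideal $\mathfrak{m} \subseteq \ci(\R^n)$ (maximal because $A$ is a field), and then to show that $\mathfrak{m} = \mathfrak{m}_p := \{f \in \ci(\R^n) : f(p) = 0\}$ for some $p \in \R^n$, so that evaluation at $p$ yields the required isomorphism $A \cong \R$. Since $A$ is a field it is automatically $\ci$-reduced (in a field, $\sqrt[\infty]{(0)} = (0)$ since for $a \neq 0$ one has $A\{a^{-1}\} = A \neq 0$, while $a = 0$ gives $A\{0^{-1}\} \cong 0$), so $\mathfrak{m}$ is $\ci$-radical; by \textbf{Proposition \ref{lerigo}.(b)} together with \textbf{Fact \ref{char}}, $\mathfrak{m}$ corresponds to a maximal filter $\Phi = \widehat{\mathfrak{m}}$ of closed subsets of $\R^n$. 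The problem is reduced to showing that every generator $\bar x_i := x_i + \mathfrak m \in A$ is a real number, for then $\mathfrak m \supseteq \mathfrak m_p$ with $p := (p_1, \dots, p_n)$ and maximality of both forces equality.

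By \textbf{Proposition \ref{d}.(2)} the canonical partial order $\prec$ is total on $A$, so if $\bar x_i \neq r$ for every $r \in \R$, the resulting Dedekind cut places $\bar x_i$ either infinitesimally close to some real $c$ or infinitely far from every real. In the infinitesimal case, say $c \prec \bar x_i \prec r$ for every real $r > c$, I would apply \textbf{Theorem \ref{ordem}} together with a smooth ramp function $\alpha_r \in \ci(\R)$ satisfying $\alpha_r(t) = t - r$ for $t \leq r$ and $\alpha_r \leq 0$ on $\R$ (and a symmetric ramp below $c$) to force both $\{x_i \geq c\}$ and $\{x_i \leq r\}$ into $\Phi$ for every $r > c$: the opposite alternative, e.g.\ $\{x_i \geq r\} \in \Phi$, would via such a ramp produce $r \preceq \bar x_i$ in $A$, contradicting $\bar x_i \prec r$. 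Hence $\pi_i^{-1}([c, r]) \in \Phi$ for every $r > c$, and the compactness of $[c, c+1]$ combined with the finite intersection property concentrates the pushforward filter $\Phi_i := \{G \subseteq \R \text{ closed} : \pi_i^{-1}(G) \in \Phi\}$ at the single point $c$. Combining this with the identity $\widecheck{\widehat{\mathfrak m}} = \mathfrak m$ of \textbf{Proposition \ref{lapa}.(2)}, one extracts a function in $\mathfrak m$ whose zeroset is contained in the hyperplane $\{x_i = c\}$, forcing $x_i - c \in \mathfrak m$ and thus $\bar x_i = c \in \R$, contradicting $\bar x_i \notin \R$.

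The remaining ``infinite'' case, say $r \prec \bar x_i$ for every $r \in \R$, is reduced to the infinitesimal one by replacing $\bar x_i$ with the bounded element $\arctan(\bar x_i) \in A$: its boundedness and the fact that $\bar x_i$ dominates every real force $\arctan(\bar x_i)$ to be infinitesimally close to $\pi/2$, and the same pushforward analysis, now for $\arctan \circ \pi_i : \R^n \to \R$, would require the preimage $\{\arctan(x_i) = \pi/2\} = \varnothing$ to lie in $\Phi$, directly contradicting that $\Phi$ is a proper filter. Once every $\bar x_i$ is a real number, the argument finishes as indicated above. The main obstacle is the transition from ``$\Phi_i$ is concentrated at $c$'' to ``$\{x_i = c\} \in \Phi$'': pushforwards of maximal filters along non-injective smooth maps need not remain maximal, so this step requires the $\ci$-radicality of $\mathfrak m$ in an essential way, exploiting the characterization of $\sqrt[\infty]{\mathfrak m}$ afforded by \textbf{Proposition \ref{alba}} and \textbf{Proposition \ref{lapa}} to produce an explicit witness function in $\mathfrak m$ vanishing precisely on the relevant hyperplane.
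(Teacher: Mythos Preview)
Your approach is quite different from the paper's: the paper simply invokes \textbf{Proposition 3.6} of \cite{MRmodels} for the assertion that every maximal ideal of $\ci(\R^n)$ has the form $\ker\mathrm{ev}_p$ for some $p\in\R^n$, after which the conclusion is immediate; you instead attempt a self-contained argument through the order theory of Sections~2--3.

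The gap you yourself isolate --- passing from ``$\{x_i\in[c-\epsilon,c+\epsilon]\}\in\Phi$ for all $\epsilon>0$'' to ``$\{x_i=c\}\in\Phi$'' --- is genuine and is not repaired by $\ci$-radicality. The identity $\widecheck{\widehat{\mathfrak m}}=\sqrt[\infty]{\mathfrak m}=\mathfrak m$ from \textbf{Proposition~\ref{lapa}} says only that $g\in\mathfrak m$ whenever $Z(g)\supseteq Z(f)$ for \emph{some single} $f\in\mathfrak m$, a purely finitary condition giving no control over the countable intersection you need; the $\arctan$ device inherits the same defect, yielding $\{x_i\geq N\}\in\Phi$ for every $N$ but never $\varnothing=\bigcap_N\{x_i\geq N\}\in\Phi$. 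Concretely, extend the proper filter of closed subsets of $\R$ generated by $\{[N,\infty):N\in\N\}$ to a maximal filter $\Phi$ and set $\mathfrak m:=\widecheck{\Phi}$. By \textbf{Proposition~\ref{lerigo}(b)} this is a maximal ideal of $\ci(\R)$; it contains every compactly supported function, one has $r\prec\bar x=x+\mathfrak m$ for every $r\in\R$ (take $\varphi$ with $Z(\varphi)=[r{+}1,\infty)\in\Phi$ in \textbf{Theorem~\ref{ordem}}), and $\mathfrak m\neq\mathfrak m_p$ for any $p$ (a bump function at $p$ lies in $\mathfrak m$ but does not vanish at $p$). Your argument produces no contradiction for this $\mathfrak m$. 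Observe that the very same example contradicts the classification of maximal ideals quoted from \cite{MRmodels}; the paper's proof therefore rests entirely on that external input, and you should check carefully what that proposition actually asserts before trying to reproduce it by internal order-theoretic means.
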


\begin{proof} Let $A \cong  \dfrac{\mathcal{C}^{\infty}(\mathbb{R}^n)}{I}$ be a finitely generated $\mathcal{C}^{\infty}-$field, so $I$ must be a maximal ideal of $\mathcal{C}^{\infty}(\R^n)$ (otherwise the quotient would not even be an ordinary field). By \textbf{Proposition 3.6} of \cite{MRmodels}, every maximal ideal $I$ of $\mathcal{C}^{\infty}(\R^n)$  has the form $I = \ker {\rm ev}_x $, where $x \in Z(I)$ and
$$\begin{array}{cccc}
{\rm ev}_x: & \mathcal{C}^{\infty}(\R^n) & \rightarrow & \mathbb{R} \\
            & f                                  & \mapsto     & f(x)
\end{array}$$
Since ${\rm ev}_x$ is surjective, by the \textbf{Fundamental Theorem of the $\mathcal{C}^{\infty}-$Homomorphism} we have:
$$A \cong \dfrac{\mathcal{C}^{\infty}(\R^n)}{I} = \dfrac{\mathcal{C}^{\infty}(\R^n)}{\ker {\rm ev}_x} \cong \R$$
\end{proof}

\begin{proposition} \label{field-prop} (see \cite{BM2}) Let $A$ be a 
          $\mathcal{C}^{\infty}-$ring.
          \begin{enumerate}
              \item If  $A$  is a 
          $\mathcal{C}^{\infty}-$field, then $A$ is a $\mathcal{C}^{\infty}-$reduced $\mathcal{C}^{\infty}-$domain.
          \item If $A$ is a $\mathcal{C}^{\infty}-$reduced $\mathcal{C}^{\infty}-$domain, then $A\{ (A \setminus\{0\})^{-1}\}$ is a $\ci$-field, $\eta_{A \setminus\{0\}} : A \rightarrowtail A\{ (A \setminus\{0\})^{-1}\}$ is an injective $\ci$-homomorphism  and $\eta_{A \setminus\{0\}}$ is universal among the injective $\ci$-homomorphisms from $A$ into some $\ci$-field.
         	\item $A$ is isomorphic to a  $\ci$-subring of a $\ci$-field if, and only if $A$ is a $\mathcal{C}^{\infty}-$reduced $\mathcal{C}^{\infty}-$domain.
         	\item For every  proper prime ideal $\mathfrak{p} \subseteq A$ that is $\mathcal{C}^{\infty}-$radical, we have a canonical  $\ci$-field $k_{\mathfrak{p}}(A) := $ $\dfrac{A}{\mathfrak{p}} \{ q_{\mathfrak{p}} [A \setminus \mathfrak{p}]^{-1} \}$ and a canonical $\ci$-morphism with kernel $\mathfrak{p}$, $A \  \overset{q_{\mathfrak{p}}}\twoheadrightarrow \ \dfrac{A}{\mathfrak{p}} \ \overset{\eta_{q_{\mathfrak{p}}[{A \setminus \mathfrak{p}]}}}\rightarrowtail \  k_{\mathfrak{p}}(A)$.
          \end{enumerate}
\end{proposition}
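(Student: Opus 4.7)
My plan is to address the four items in order, with item (2) carrying the technical weight and the remaining items following fairly directly from it together with earlier results in the paper.

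For item (1), a $\ci$-field has an underlying field, which is automatically a (non-trivial) integral domain, so the only thing to check is $\ci$-reducedness. If $a \in \sqrt[\infty]{(0)}$, then $A\{a^{-1}\} \cong 0$ by {\bf Definition \ref{defrad}}. If $a \neq 0$, then $a \in A^{\times}$, so the identity on $A$ sends $a$ to an invertible element and the universal property of $\eta_a$ forces $A\{a^{-1}\} \cong A \not\cong 0$, a contradiction. Hence $\sqrt[\infty]{(0)} = (0)$.

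For item (2), set $S := A \setminus \{0\}$. By {\bf Proposition \ref{rad-pr}(b)}, the ideal $(0)$ is a $\ci$-radical prime of $A$, so {\bf Theorem \ref{TS}(c)} yields $S = S^{\infty-{\rm sat}}$. For injectivity of $\eta_S$, I would combine {\bf Proposition \ref{alba}} with the fact that $(0) \cap S = \varnothing$: if $\eta_S(a)=0$, then $a \cdot s = 0$ for some $s$ in a $\ci$-saturated enlargement of $S$, which coincides with $S$ itself, and the domain hypothesis forces $a = 0$. For the field property my approach is spectral: by the functoriality of ${\rm Spec}^{\infty}$ together with {\bf Theorem \ref{TS}(d)-(e)}, the $\ci$-radical primes of $A\{S^{-1}\}$ correspond bijectively to the $\ci$-radical primes of $A$ disjoint from $S = S^{\infty-{\rm sat}}$, and the only such prime is $(0)$; combined with $\ci$-reducedness of $A\{S^{-1}\}$ (obtained via the quotient description $A\{x_s : s \in S\}/\langle x_s\cdot \iota_A(s)-1\rangle$ and {\bf Proposition \ref{egito}}), this forces $(0)$ to be the unique maximal $\ci$-radical ideal and hence $A\{S^{-1}\}$ to be a $\ci$-field. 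The universal property follows immediately: any injective $\ci$-homomorphism $f : A \to F$ into a $\ci$-field sends $S$ into $F^{\times}$, so it factors uniquely through $\eta_S$ by {\bf Definition \ref{Alem}}.

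For item (3), the forward implication combines item (1) with {\bf Proposition \ref{redi}} ($\ci$-reducedness descends through monomorphisms) and the classical fact that subrings of domains are domains; the converse is precisely item (2) applied to the embedding $A \rightarrowtail A\{(A\setminus\{0\})^{-1}\}$. For item (4), one applies item (2) to $A/\mathfrak{p}$, which is a $\ci$-reduced $\ci$-domain by {\bf Proposition \ref{rad-pr}(b)}; since $\mathfrak{p}$ is prime, $q_{\mathfrak{p}}[A \setminus \mathfrak{p}] = (A/\mathfrak{p}) \setminus \{0\}$, so the resulting $\ci$-field is $k_{\mathfrak{p}}(A)$ as defined, and the kernel of the composition is $\mathfrak{p}$ because $q_{\mathfrak{p}}$ has kernel $\mathfrak{p}$ and the second map is injective.

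The main obstacle I anticipate is precisely the field property in item (2): showing that \emph{every} nonzero element of $A\{S^{-1}\}$ is invertible, not merely those in the image of $\eta_S$. Because $\ci$-rings of fractions involve $\ci$-polynomial combinations of the adjoined inverses---rather than classical fractions $a/s$---the naive ``$s/a$ is an inverse of $a/s$'' argument from ordinary commutative algebra does not transfer. The spectral route above bypasses this by reducing the problem to a statement about $\ci$-radical primes, but it depends on the delicate auxiliary claim that $A\{S^{-1}\}$ is $\ci$-reduced, which is where I expect the real work to lie.
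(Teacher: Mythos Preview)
The paper does not prove this proposition at all; it simply refers to \cite{BM2}. There is therefore no in-paper argument to compare against, and I evaluate your outline on its own.

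Items (1), (3), and (4) are correct as sketched. For item (2), your spectral strategy is the right idea, but two steps need repair. First, your injectivity argument invokes a kernel description (``$\eta_S(a)=0 \Rightarrow a\cdot s=0$ for some $s\in S^{\infty-\mathrm{sat}}$'') that is neither in this paper nor delivered by {\bf Proposition~\ref{alba}}, which concerns $\sqrt[\infty]{I}$ rather than $\ker\eta_S$. A cleaner route: if $a\neq 0$ then $a\in S$, so $\eta_S(a)\in (A\{S^{-1}\})^\times$, hence $\eta_S(a)\neq 0$ as soon as $A\{S^{-1}\}\neq 0$; and nontriviality already follows from your spectral bijection, since $(0)\in{\rm Spec}^\infty(A)$ is disjoint from $S$. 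Second, your appeal to {\bf Proposition~\ref{egito}} for the $\ci$-reducedness of $A\{S^{-1}\}$ does not go through: nothing there shows that the ideal $\langle x_s\cdot\iota_A(s)-1\rangle$ of $A\{x_s:s\in S\}$ is $\ci$-radical. However, you do not actually need a separate argument. If the spectral bijection you invoke is the usual extension/contraction correspondence, then the $\ci$-radical prime $(0)\subseteq A$ corresponds to its \emph{extension} $(0)\subseteq A\{S^{-1}\}$, so $(0)$ is itself the unique $\ci$-radical prime of $A\{S^{-1}\}$. Since every maximal ideal of a $\ci$-ring is $\ci$-radical (a proper $I$ has proper $\sqrt[\infty]{I}\supseteq I$, so maximality forces equality), this makes $(0)$ maximal and $A\{S^{-1}\}$ a $\ci$-field. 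In short, the bijection from \cite{BM2} already carries all the weight; your detour through {\bf Proposition~\ref{egito}} is both unnecessary and unjustified as stated.
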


Now we are ready to turn our attention to the order theory of $\ci$-fields.

By \textbf{Corollary \ref{boundedinv}}, every $\ci$-reduced $\ci$-ring  $A$ satisfies the relation $1 + \sum A^2 \subseteq A^\times$. 
In particular, every $\ci$-field $A$ is formally real, i.e. $-1 \notin \sum A^2$, thus it can be endowed with some linear order relation compatible with its sum and product. In fact, since a $\ci$-field is a non-trivial $\ci$-reduced $\ci$-ring, we have a distinguished linear order relation in $A$ that is compatible with its sum and product:

\begin{theorem} \label{orderedfield-teo} Let $A$ be a $\ci-$field and $\prec_A$ be the canonical strict partial order on $A$, cf. {\bf Definition \ref{parcial-def}}.  Then $(A, \preceq)$ is a totally/linearly ordered field, {\em i.e.}, $\preceq$ is a reflexive, transitive and anti-symmetric binary relation in $A$ that is compatible with sum and product and, moreover,  it holds the {\em trichotomy law}, {\em i.e.}, for every $a,b \in A$ we have exactly one of the following $a = b$ or $a \prec b$ or $b \prec a$.
\end{theorem}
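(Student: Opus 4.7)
The plan is to assemble the conclusion from the machinery already developed, the critical ingredient being \textbf{Proposition \ref{d}.(2)} applied to the case of a $\ci$-reduced $\ci$-domain. First I would observe that by \textbf{Proposition \ref{field-prop}.(1)} every $\ci$-field is a non-trivial $\ci$-reduced $\ci$-domain, so $A$ falls within the scope of \textbf{Proposition \ref{d}}; moreover, since $A$ is a field, $A^{\times} = A \setminus \{0\}$. Consequently,
$$A \setminus \{0\} = A^{\times} = (A^{\times})^{2} \,\cup\, -(A^{\times})^{2}.$$

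The central step is establishing the trichotomy law for $\prec_A$. Given $a,b \in A$, set $c := b - a$. If $c = 0$, then $a = b$. Otherwise $c \in A \setminus \{0\} = A^{\times}$ and, by the displayed identity above, either $c = u^{2}$ for some $u \in A^{\times}$ (so $a \prec_A b$ by \textbf{Definition \ref{prec-def}}), or $-c = v^{2}$ for some $v \in A^{\times}$, in which case $a - b = v^{2}$ and thus $b \prec_A a$. So at least one of $a = b$, $a \prec_A b$, $b \prec_A a$ holds. Exclusivity is then automatic: \textbf{Proposition \ref{irreflexive}} forbids $a \prec_A a$ (hence $a = b$ is incompatible with $a \prec_A b$ or $b \prec_A a$), and \textbf{Proposition \ref{asymmetric}} forbids the simultaneous occurrence of $a \prec_A b$ and $b \prec_A a$.

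Finally, the structural properties of $\preceq_A$ follow routinely from the trichotomy and from already established facts about $\prec_A$: reflexivity is immediate from the definition $a \preceq_A a \iff a \prec_A a \vee a = a$; transitivity of $\preceq_A$ reduces to the transitivity of $\prec_A$ (\textbf{Proposition \ref{transitive}}) combined with the equality cases; anti-symmetry is precisely the asymmetry of $\prec_A$ rephrased (if $a \preceq_A b$ and $b \preceq_A a$ with $a \neq b$, then both $a \prec_A b$ and $b \prec_A a$ would hold, contradicting \textbf{Proposition \ref{asymmetric}}); and compatibility of $\prec_A$, hence of $\preceq_A$, with the field operations is exactly \textbf{Theorem \ref{order-teo}}.

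There is no genuine obstacle here, because the deep work has already been done: the characterization \textbf{Theorem \ref{ordem}} underlies \textbf{Proposition \ref{d}}, and the latter is what converts invertibility into a square-or-negative-square dichotomy. If any part of the argument deserves care, it is the verification that \textbf{Proposition \ref{d}.(2)} genuinely applies, which requires confirming that a $\ci$-field satisfies the hypothesis ``$\ci$-reduced $\ci$-domain'' (via \textbf{Proposition \ref{field-prop}.(1)}); once this is in hand, trichotomy is immediate and the remainder of the theorem is a formal consequence of the order-theoretic results of Section~3.
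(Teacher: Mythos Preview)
Your proposal is correct and follows essentially the same approach as the paper's own proof: both invoke \textbf{Proposition \ref{field-prop}.(1)} to place $A$ among the $\ci$-reduced $\ci$-domains, then apply \textbf{Proposition \ref{d}.(2)} together with $A^{\times} = A \setminus \{0\}$ to obtain trichotomy, with the remaining order-theoretic properties drawn from \textbf{Theorem \ref{order-teo}} and the results of Section~3. The only cosmetic difference is that the paper reduces trichotomy to the restricted form ``$0 \prec f$ or $f \prec 0$ for $f \neq 0$'' via compatibility with sum, whereas you argue directly on $c = b - a$; the content is identical.
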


\begin{proof}
Since $a \preceq b$ iff $a \prec b $ or $a = b$, it follows directly from {\bf Theorem \ref{order-teo}} that $\preceq$ is a reflexive, transitive and anti-symmetric binary relation in $A$ that is compatible with sum and product.

By the compatibility of $\prec$ with the sum, to obtain the trichotomy law it is enough to show that for every $f \in A \setminus \{ 0\}$ we have either $(0 \prec f)$  or $(f \prec 0).$

Since $A$ is a $\ci$-field, $A^\times = A \setminus \{ 0\}$ and the result follows directly from {\bf Proposition \ref{field-prop}.(1)} and {\bf Proposition \ref{d}.(2)}: $ A^{\times} = (A^{\times})^2 \cup (-A^{\times})^2$.




\end{proof}





 In general, a field could support many linear orders compatible with its sum and product. A field is  called {\bf Euclidean} if it has a \underline{unique} (linear, compatible with $+, \cdot$) ordering: these fields are precisely the ordered fields such that every positive member has an square root in the field.  It is clear from the definition of $\prec$ and by {\bf Theorem \ref{orderedfield-teo}} that every $\ci$-field is Euclidean.\\
         	
Now recall that a totally ordered field $(F, \leq)$ is \textbf{real closed} if it satisfies the following two conditions:
\begin{itemize}
  \item[(a)]{$(\forall x \in F)(0 \leq x  \rightarrow (\exists y \in F)(x=y^2))$ (i.e. it is an Euclidean field);}
  \item[(b)]{every polynomial of odd degree has, at least, one root;}
\end{itemize}

Equivalently, a totally ordered field $(F, \leq)$ is real closed if, and only if it  satisfies the conclusion of {\bf intermediate value theorem} for all {\em polynomial} functions $h : F \to F$.\\

As pointed out in {\bf Theorem 2.10} of \cite{rings1}, it holds the following:

\begin{fact}Every $\mathcal{C}^{\infty}-$field, $A$, together with its canonical order $\prec$  is such that $A$ is a real closed field.
\end{fact}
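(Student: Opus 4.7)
The plan is to verify the two defining conditions for $(A, \preceq)$ being real closed, where $\preceq$ is the reflexive closure of the canonical order $\prec_A$. Condition (a), the Euclidean property, is essentially free: by \textbf{Theorem \ref{orderedfield-teo}}, $(A, \preceq)$ is a totally ordered field, and the definition $0 \prec u^2$ for $u \in A^\times$ together with $0 = 0^2$ already says that every $\preceq$-nonnegative element is a square. So the real content is condition (b): every polynomial $p(x) \in A[x]$ of odd degree has a root.

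First I would reduce to the monic case $p(x) = x^n + a_{n-1}x^{n-1} + \cdots + a_0$ with $n$ odd, and then use the compatibility of $\prec$ with sums and products (\textbf{Theorem \ref{order-teo}}) plus a leading-term estimate to produce $M \in A$ with $M \succ 0$, $p(M) \succ 0$ and $p(-M) \prec 0$. This bracketing is the ordered-field analogue of the classical setup for IVT and is the only place where the assumption that $n$ is odd is really used.

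The heart of the argument is to exhibit a \emph{smooth} root-selecting function: a $\ci$-map $\sigma : \R^n \to \R$ such that
$$\sigma(b_0,\ldots,b_{n-1})^n + \sum_{i=0}^{n-1} b_i\,\sigma(b_0,\ldots,b_{n-1})^i = 0$$
identically on $\R^n$. For odd $n$ the classical IVT guarantees at least one real root of every such polynomial; the problem is to select one smoothly across the discriminant locus, where real roots can coalesce. My plan is to patch together local smooth selections coming from the implicit function theorem (valid where a chosen real root is simple) via a partition of unity, and to handle the discriminant locus by a Whitney-type extension using the $M$-bracket from the previous step. I expect this global smooth selection step to be the main technical obstacle.

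Once $\sigma$ is in hand, the transfer to $A$ is automatic from \textbf{Definition \ref{CravoeCanela}}: because $A$ is a $\ci$-ring, every equation between compositions of smooth functions holding identically on $\R^n$ is preserved by the interpretation $\sigma^A : A^n \to A$, so $c := \sigma^A(a_0,\ldots,a_{n-1})$ satisfies $p(c) = 0$ in $A$. If global smooth selection proves too delicate, a fallback strategy is to present $A \cong \ci(\R^E)/\mathfrak{m}$ with $\mathfrak{m}$ a maximal $\ci$-radical ideal, translate the bracket $p(-M) \prec 0 \prec p(M)$ via \textbf{Theorem \ref{ordem}} into a pointwise inequality on some $Z(\varphi)$ with $\varphi \in \mathfrak{m}$, apply the classical IVT fibrewise over $Z(\varphi)$, and then smoothly extend the resulting selection to $\R^E$ using \textbf{Fact \ref{obs}}; the $\mathfrak{m}$-class of the candidate root is then controlled by the filter of zerosets $\widehat{\mathfrak{m}}$ supplied by \textbf{Proposition \ref{lerigo}}.
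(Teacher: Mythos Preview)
The paper does not supply a proof of this fact; it simply attributes it to \textbf{Theorem 2.10} of \cite{rings1}. So there is no in-paper argument to compare against, and your proposal must stand on its own.

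Your main strategy fails at its central step: a global smooth root selector $\sigma:\R^n\to\R$ with $\sigma(b)^n+\sum_i b_i\,\sigma(b)^i\equiv 0$ does not exist, already for $n=3$. Suppose a $C^1$ map $\sigma(p,q)$ solved $x^3+px+q=0$. Differentiating in $q$ gives $(3\sigma^2+p)\,\partial_q\sigma=-1$, so $3\sigma^2+p$ never vanishes; equivalently $\sigma$ never selects a multiple root. At $(p,q)=(-3,-2)$ the roots of $(x+1)^2(x-2)$ are $-1,-1,2$, forcing $\sigma(-3,-2)=2$; at $(p,q)=(-3,2)$ the roots of $(x-1)^2(x+2)$ are $-2,1,1$, forcing $\sigma(-3,2)=-2$. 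Along the segment $q\in(-2,2)$ (with $p=-3$) all three real roots are simple and distinct, so $\sigma$ must track one branch continuously. The branch equal to $2$ at $q=-2$ is the largest-root branch, which tends to $1$ (not $-2$) as $q\to 2^-$: contradiction. Hence no $C^1$ (let alone $\ci$) global selector exists; the obstruction is monodromy around the discriminant, not a local regularity issue. Your partition-of-unity patching cannot help for a separate reason as well: a convex combination of roots is not a root.

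Your fallback is much closer to how Moerdijk--Reyes actually argue and is repairable, but as written it inherits the same defect: fibrewise IVT on $Z(\varphi)$ produces a root at each point, yet the selection need not vary smoothly over $Z(\varphi)$ precisely where roots collide. The missing ingredient is a reduction to \emph{simple} roots. Over any field, iterated factoring via $\gcd(p,p')$ yields an odd-degree divisor $q\mid p$ with $\gcd(q,q')=1$; after bracketing $q$ to obtain a sign change, the implicit function theorem provides a genuinely smooth local root, and the maximal filter $\widehat{\mathfrak{m}}$ from \textbf{Proposition \ref{lerigo}} lets you shrink $Z(\varphi)$ to a region where the number of roots in the bracketing interval is constant, so that (say) the smallest such root is globally smooth and extends via \textbf{Fact \ref{obs}}. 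This simple-root hypothesis is exactly why the stronger statement recorded immediately afterward in the paper (\textbf{Theorem 2.10$'$} of \cite{rings1}) carries the condition $1\in\langle f,f'\rangle$.
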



In fact, a stronger property holds for every $\ci$-fields We have the $\mathcal{C}^{\infty}-$analog of the notion of ``real closedness'':

\begin{fact}[\textbf{Theorem 2.10'} of \cite{rings1}]Let $(F,\prec)$ be a $\mathcal{C}^{\infty}-$field. Then $(F,\prec)$ is \index{$\mathcal{C}^{\infty}-$real closed}\textbf{$\mathcal{C}^{\infty}-$real closed}. I.e., it holds:\\
$$(\forall f \in F\{ x\})((f(0)\cdot f(1)<0)\&(1 \in \langle \{ f, f'\}\rangle \subseteq F\{ x\} ) \rightarrow$$ 
$$(\exists \alpha \in ]0,1[ \subseteq F)(f(\alpha)=0))$$
\end{fact}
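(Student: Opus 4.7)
The plan is to reduce the claim to a statement in classical smooth analysis by exploiting the spatial characterizations from Section 3. Since $F$ is a $\ci$-reduced $\ci$-field (Proposition \ref{field-prop}), I would present $F \cong \ci(\R^E)/\mathfrak{m}$ for some set $E$ and some $\ci$-radical maximal ideal $\mathfrak{m}$ of $\ci(\R^E)$; then $F\{x\} \cong \ci(\R^{E \cup \{*\}})/I$ with $I$ the $\ci$-radical of the ideal generated by the image of $\mathfrak{m}$. By Proposition \ref{lerigo}, the filter $\widehat{I}$ is generated by the cylindrical zerosets $\{Z(\psi) \times \R : \psi \in \mathfrak{m}\}$, and since these are already closed under intersection, $\widehat{I} = \{H \in \mathcal{Z}(\R^{E \cup \{*\}}) : H \supseteq Z(\psi) \times \R \text{ for some } \psi \in \mathfrak{m}\}$. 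Lift $f$ to $\tilde f \in \ci(\R^{E \cup \{*\}})$; the partial derivative $\partial_x \tilde f$ then lifts $f' \in F\{x\}$.

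Next I would translate both hypotheses. From $f(0) \cdot f(1) \prec 0$ in $F$ and Theorem \ref{ordem}, obtain $\varphi_0 \in \mathfrak{m}$ such that $\tilde f(y,0)\,\tilde f(y,1) < 0$ for every $y \in Z(\varphi_0) \subseteq \R^E$. From $1 \in \langle f, f'\rangle \subseteq F\{x\}$, lift an identity $1 = gf + hf'$, so that $1 - \tilde g \tilde f - \tilde h\,\partial_x \tilde f \in I$; by Proposition \ref{equacional2} there is $\varphi_1 \in I$ with $\tilde g \tilde f + \tilde h\, \partial_x \tilde f = 1$ on $Z(\varphi_1)$. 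By the cylindrical description of $\widehat I$, pick $\psi \in \mathfrak{m}$ with $Z(\psi) \times \R \subseteq Z(\varphi_1)$; on this cylinder $\tilde f$ and $\partial_x \tilde f$ never vanish simultaneously. Setting $\tau = \varphi_0^2 + \psi^2 \in \mathfrak{m}$, for every $y \in Z(\tau)$ the smooth one-variable function $x \mapsto \tilde f(y,x)$ changes sign between $0$ and $1$ and has only simple zeros on $\R$.

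Now reduce to finite dimensions: $\tilde f, \varphi_0, \psi$ each factor through a finite-dimensional projection, so choose $E'' \subseteq_{\rm fin} E$ containing all the relevant coordinates, work inside $\ci(\R^{E''})$, and let $\tau'' \in \ci(\R^{E''})$ represent $\tau$. Define $\alpha(y) := \min\{x \in [0,1] : \tilde f(y,x) = 0\}$. For $y \in Z(\tau'')$, the classical IVT guarantees this minimum exists and lies in $(0,1)$. The implicit function theorem at $(y, \alpha(y))$, where $\partial_x \tilde f(y, \alpha(y)) \neq 0$, gives pointwise smoothness, and since new zeros cannot appear under small perturbations of $y$ without $\tilde f$ and $\partial_x \tilde f$ vanishing simultaneously, the ``smallest zero'' definition is stable on an open neighborhood $V$ of $Z(\tau'')$, on which $\alpha$ is smooth and $(0,1)$-valued. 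Apply smooth Tietze (Fact \ref{obs}) to extend $\alpha|_{Z(\tau'')}$ to $\tilde\alpha \in \ci(\R^{E''}) \subseteq \ci(\R^E)$, and set $\alpha_F := \tilde\alpha + \mathfrak{m} \in F$. Since $\tilde f(y, \tilde\alpha(y)) = 0$ and both $\tilde\alpha(y), 1 - \tilde\alpha(y) > 0$ for every $y \in Z(\tau)$, Proposition \ref{equacional2} and Theorem \ref{ordem} yield $f(\alpha_F) = 0$ and $0 \prec \alpha_F \prec 1$ in $F$.

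The main obstacle is upgrading the pointwise selection of a root to a smooth global selection on a neighborhood of $Z(\tau'')$. Pointwise smoothness is just the implicit function theorem, but ensuring that the ``smallest root'' function does not jump as $y$ varies requires that no zero of $\tilde f(y,\cdot)$ be created or annihilated under small perturbation of $y$ — i.e., that no saddle-node bifurcation occurs. This is exactly what the hypothesis $1 \in \langle f, f'\rangle$ buys, after its cylindrical translation through $\widehat I$, and it is this transversality input that distinguishes $\ci$-real closedness from the (weaker) real closedness statement concerning only ordinary polynomials.
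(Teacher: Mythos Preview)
The paper does not prove this statement; it is recorded as a \textbf{Fact} with a citation to \textbf{Theorem 2.10'} of \cite{rings1}. Your argument is essentially correct and is in the spirit of the original Moerdijk--Reyes proof: translate the hypotheses spatially via the results of Sections~2--3, then use the implicit function theorem together with the transversality furnished by $1\in\langle f,f'\rangle$ to select the least root in $(0,1)$ smoothly in the parameter $y$, and push the result back into $F$ via Proposition~\ref{equacional2} and Theorem~\ref{ordem}.

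One minor correction: the identification $F\{x\}\cong\ci(\R^{E\cup\{*\}})/I$ with $I$ the $\ci$-radical is not literally right; the $\ci$-coproduct gives $F\{x\}\cong\ci(\R^{E\cup\{*\}})/J$ where $J$ is the ideal \emph{generated} by the image of $\mathfrak{m}$, and $J$ need not be $\ci$-radical. This is harmless for your proof, however, since from $1-\tilde g\tilde f-\tilde h\,\partial_x\tilde f\in J$ one can write this element as a finite combination $\sum h_i(\psi_i\circ\pi_{E\cup\{*\},E})$ with $\psi_i\in\mathfrak m$, which directly exhibits a cylinder $Z(\sum\psi_i^2)\times\R$ on which $\tilde f$ and $\partial_x\tilde f$ have no common zero; no appeal to Proposition~\ref{equacional2} or to the filter $\widehat{I}$ is needed at that step. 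Likewise, the stability of the least-zero selection near a point $y_0\in Z(\tau'')$ does not require transversality off $Z(\tau'')$: the IFT gives a local smooth branch $\beta$, and a compactness argument on $[0,\alpha(y_0)-\delta]$ shows $\beta$ remains the least zero on an open neighbourhood of $y_0$ in $\R^{E''}$; these local branches coincide on overlaps (each equals the least zero), so they glue to the open $V$ you invoke before applying Fact~\ref{obs}.
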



Note that the class of $\ci$-fields is an $\mathcal{L}$-elementary (proper) class, where  all structures have cardinally at least $2^{\aleph_0}$, but every finitely generated $\ci$-field is real closed  archimedian, i.e. isomorphic to $\R$ (see {\bf Proposition \ref{fieldfg-prop}}).\\

The notion of $\ci$-field is also useful to analyze the order theory of $\ci$-reduced $\ci$-rings:

\begin{remark} \label{local-global fg} Consider $A = \ci(\R^n)$. The inclusion $i  : \ci(\R^n) \hookrightarrow  {\rm Func}(\R^n, \R) = {\R}^{\R^n}$ obviously preserves and reflects the equality relation (=) and the canonical strict partial order ($\prec$). Note that, by  {\bf Proposition \ref{fieldfg-prop}}, this inclusion  can be identified with 

$$A \  \overset{(q_{\mathfrak{m}})_{\mathfrak{m}}}\rightarrow \ \prod_{\mathfrak{m} \in {\rm Max}(A)} \dfrac{A}{\mathfrak{m}}$$

Thus, the family of all $\ci$-fields $\{ \dfrac{A}{\mathfrak{m}} : \mathfrak{m}\in {\rm Max}(A)\}$  encodes the  canonical relation  $\prec_A$ on $A$. 

The family of all $\ci$-fields $\{k_{\mathfrak{p}}(A) : \mathfrak{p}\in {\rm Spec}^{\infty}(A) \}$ also encodes the  canonical relation  $\prec_A$ on $A$. 

Consider the canonical $\ci$-homomorphism $c_A : A \to \prod_{\mathfrak{p}\in {\rm Spec}^{\infty}\,(A)} k_{\mathfrak{p}}(A)$, that is given by:

$$A \  \overset{(q_{\mathfrak{p}})_{\mathfrak{p}}}\rightarrow \ \prod_{\mathfrak{p}\in {\rm Spec}^{\infty}\,(A)} \dfrac{A}{\mathfrak{p}} \ \overset{(\eta_{q_{\mathfrak{p}}[{A \setminus \mathfrak{p}]}})_{\mathfrak{p}}}\rightarrowtail \  \prod_{\mathfrak{p}\in {\rm Spec}^{\infty}\,(A)} k_{\mathfrak{p}}(A)$$

By {\bf Proposition \ref{field-prop}.(4)}, $\ker(c_A) = \bigcap_{\mathfrak{p} \in {\rm Spec}^{\infty}\,(A)} \mathfrak{p}$. Thus, by {\bf Theorem \ref{TS}.(e)},  $\ker(c_a) = \sqrt[\infty]{(0)} = \{0\}$  and $c_A$ is an injective $\ci$-homomorphism, i.e., it preserves and {\em reflects} the equality relation. We will see that $c_A$ also preserves and reflects the canonical relation $\prec$.

The $\ci$-homomorphism $c_A$ preserves $\prec$ (see {\bf Remark \ref{preserve-re}}). Note that, to establish that $c_A : A \to \prod_{\mathfrak{p}\in {\rm Spec}^{\infty}\,(A)} k_{\mathfrak{p}}(A)$ reflects $\prec$ it suffices to certify that: 

$$A \  \overset{(q_{\mathfrak{m}})_{\mathfrak{m}}}\rightarrow \ \prod_{\mathfrak{m}\in {\rm Max}(A)} \dfrac{A}{\mathfrak{m}}$$

\noindent reflects $\prec$. In fact, since $(\forall \mathfrak{m}\in {\rm Max}(A)) (\dfrac{A}{\mathfrak{m}} \cong k_{\mathfrak{m}}(A))$, the inclusion ${\rm Max}(A) \subseteq {\rm Spec}^{\infty}\,(A)$ (this holds by {\bf Proposition \ref{field-prop}.(1)}) induces a canonical ``projection":

$$ \pi_A :  \prod_{\mathfrak{p}\in {\rm Spec}^{\infty}\,(A)} k_{\mathfrak{p}}(A) \twoheadrightarrow \prod_{\mathfrak{m}\in {\rm Max}(A)} \dfrac{A}{\mathfrak{m}} $$

\noindent and, obviously, 

$$ (q_{\mathfrak{m}})_{\mathfrak{m}} = \pi_A \circ c_A ,$$

Thus,  if $a , b \in A$ are such that $a \nprec b$, implies  $ (q_{\mathfrak{m}})_{\mathfrak{m}}(a) \nprec (q_{\mathfrak{m}})_{\mathfrak{m}}(b)$, then also holds $c_A(a) \nprec c_A(b)$.

\end{remark}

\vspace{0.3cm}

Now we will apply the results on $\ci$-fields to describe another approach of the order theory of (general) $\ci$-rings.

\begin{definition} \label{rel}
Let $A$  be an arbitrary $\ci$-ring.
Let $\mathcal{F}$ be the (proper) class of all the $\mathcal{C}^{\infty}-$homomorphisms of  $A$ to some $\mathcal{C}^{\infty}-$field. We define the following relation $\mathcal{R}$: given $h_1: A \to F_1$ and $h_2: A \to F_2$, we say that $h_1$ is related with $h_2$ if, and only if, there is some $\mathcal{C}^{\infty}-$field $\widetilde{F}$ and some $\mathcal{C}^{\infty}-$fields homomorphisms $\mathcal{C}^{\infty}$ $f_1: F_1 \to \widetilde{F}$ and $f_2: F_2 \to \widetilde{F}$ such that the following diagram commutes:

$$\xymatrixcolsep{5pc}\xymatrix{
  & F_1 \ar[dr]^{f_1} & \\
A \ar[ur]^{h_1} \ar[dr]_{h_2} & & \widetilde{F} \\
  & F_2 \ar[ur]_{f_2}
}$$

The relation $\mathcal{R}$ defined above is symmetric and reflexive.
\end{definition}


The above considerations prove the following:

\begin{proposition}[see \textbf{Proposition 54 of \cite{BM2}}]If $h_1: A \to F_1$ and $h_2: A \to F_2$ be two $\mathcal{C}^{\infty}-$homomorphisms from the $\mathcal{C}^{\infty}-$ring  $A$ to the $\mathcal{C}^{\infty}-$fields $F_1,F_2$ such that $(h_1,h_2) \in \mathcal{R}$, then $\ker(h_1)=\ker(h_2)$.
\end{proposition}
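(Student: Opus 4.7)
The plan is to exploit the fact that any homomorphism out of a field into a nontrivial ring is automatically injective, which will immediately reduce the claim to a kernel equality through a common extension.

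First, I would unpack the hypothesis $(h_1, h_2) \in \mathcal{R}$ to obtain the $\ci$-field $\widetilde{F}$ together with $\ci$-homomorphisms $f_1 : F_1 \to \widetilde{F}$ and $f_2 : F_2 \to \widetilde{F}$ such that $f_1 \circ h_1 = f_2 \circ h_2$. Since $F_1, F_2$ are $\ci$-fields, their underlying rings are (honest) fields; and since $\widetilde{F}$ is a $\ci$-field it is in particular non-trivial, so $0 \neq 1$ in $\widetilde{F}$. Thus $\ker(f_i)$ is a proper ideal of the field $F_i$, and the only proper ideal of a field is $\{0\}$, whence $f_1$ and $f_2$ are injective $\ci$-homomorphisms.

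Once injectivity of $f_1, f_2$ is in hand, the conclusion is immediate: for any $a \in A$,
\[
a \in \ker(h_1) \iff h_1(a) = 0 \iff f_1(h_1(a)) = 0 \iff f_2(h_2(a)) = 0 \iff h_2(a) = 0 \iff a \in \ker(h_2),
\]
where the second and fourth equivalences use injectivity of $f_1$ and $f_2$ respectively, and the middle one uses the commutativity of the diagram from \textbf{Definition \ref{rel}}. This establishes $\ker(h_1) = \ker(h_2)$.

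There is essentially no obstacle here; the only thing to be careful about is to ensure that $\widetilde{F}$ is really non-trivial (so that the kernel of a field homomorphism into it is forced to be zero). This is automatic since the definition of $\ci$-field requires the underlying ring to be a field, hence non-zero. One could alternatively phrase the proof as saying that $(h_1, h_2) \in \mathcal{R}$ induces a single $\ci$-homomorphism $A \to \widetilde{F}$, namely $f_1 \circ h_1 = f_2 \circ h_2$, whose kernel coincides with $\ker(h_1)$ and with $\ker(h_2)$ by injectivity of the $f_i$'s; this phrasing makes the argument a one-liner after the diagram has been drawn.
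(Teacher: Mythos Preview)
Your argument is correct and is exactly the intended one: the paper does not spell out a proof but simply writes ``The above considerations prove the following,'' and those considerations amount precisely to your observation that the $f_i$ are injective (being field homomorphisms into a nontrivial ring) together with the commutativity $f_1 \circ h_1 = f_2 \circ h_2$.
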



\begin{definition}Let $A$ be a $\mathcal{C}^{\infty}-$ring. A \index{$\mathcal{C}^{\infty}-$ordering}$\mathcal{C}^{\infty}-$\textbf{ordering} in $A$ is a subset $P \subseteq A$ such that:
\begin{itemize}
    \item [(O1)]{$P + P \subseteq P$;} 
    \item [(O2)]{$P \cdot P \subseteq P$;} 
    \item [(O3)]{$P \cup (-P) = A$}
  \item [(O4)]{$P \cap (-P) = \mathfrak{p} \in {\rm Spec}^{\infty}\,(A)$}
\end{itemize}
\end{definition}


\begin{fact}  Let $\Sigma(A) := \{ (\mathfrak{p}, Q): \mathfrak{p} \in {\rm Sper}^{\infty}\,(A),  Q \in {\rm Spec}^{\infty}\,(k_{\mathfrak{p}}(A))\}$. The  mapping $P \in {\rm Spec}^{\infty}\,(A) \mapsto (\mathfrak{p}_P, Q_P) \in \Sigma(A)$, where $\mathfrak{p}_P := P \cap (-P)$ (or simply $\mathfrak{p}$) and $Q_P := \{ \eta_{q_{\mathfrak{p}}[{A \setminus \mathfrak{p}]}}(a + \mathfrak{p}) . (\eta_{q_{\mathfrak{p}}[{A \setminus \mathfrak{p}]}}(b+ \mathfrak{p}))^{-1} : b \notin \mathfrak{p}, a.b \in P\} $  is a bijection (this is essentially the {\bf Proposition 5.1.1} in \cite{Mar}).

\end{fact}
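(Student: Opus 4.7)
The plan is to adapt the classical bijection of Marshall (Proposition 5.1.1 in \cite{Mar}) between orderings of a commutative ring and pairs (prime ideal, ordering of the residue field) to the $\ci$-setting, replacing the ordinary residue field by the canonical $\ci$-field $k_{\mathfrak{p}}(A)$ of \textbf{Proposition \ref{field-prop}.(4)} and replacing $\mathrm{Spec}$ by $\mathrm{Spec}^{\infty}$.

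First I would show the forward map is well-defined. Given a $\ci$-ordering $P$ of $A$, axiom (O4) says $\mathfrak{p}_P := P\cap(-P) \in {\rm Spec}^{\infty}(A)$; in particular $A/\mathfrak{p}_P$ is a $\ci$-reduced $\ci$-domain by \textbf{Proposition \ref{rad-pr}.(b)}, and therefore $k_{\mathfrak{p}_P}(A)$ is a genuine $\ci$-field via the injective homomorphism $\eta_{q_{\mathfrak{p}_P}[A\setminus\mathfrak{p}_P]}$ by \textbf{Proposition \ref{field-prop}.(2)}. It is then straightforward to check the candidate $Q_P$ is well-defined on representatives: if $\eta(a+\mathfrak{p})\eta(b+\mathfrak{p})^{-1}=\eta(a'+\mathfrak{p})\eta(b'+\mathfrak{p})^{-1}$ in $k_{\mathfrak{p}}(A)$ with $b,b'\notin\mathfrak{p}$ and $ab\in P$, then $ab'b - a'bb' \in \mathfrak{p}_P = P\cap(-P)$, and multiplying by the squares $(bb')^{2}\in P$ one deduces $a'b'\in P$ modulo $\mathfrak{p}_P$. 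The axioms (O1)--(O4) for $Q_P$ on $k_{\mathfrak{p}_P}(A)$ then follow from the corresponding axioms for $P$ on $A$: closure under sums and products transfers by clearing denominators with squares (which lie in both $P$ and in every $\ci$-ordering), and (O3)--(O4) for $Q_P$ follow from (O3)--(O4) for $P$ together with the fact that $k_{\mathfrak{p}_P}(A)$ has only $(0)$ as $\ci$-radical prime, so $Q_P\cap(-Q_P)=(0)$, making $Q_P$ a total ordering of the $\ci$-field as predicted by \textbf{Theorem \ref{orderedfield-teo}}.

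Next I would construct the inverse. Given $(\mathfrak{p},Q)\in\Sigma(A)$, set
$$ \Psi(\mathfrak{p},Q) \;:=\; \{\, a\in A : \eta_{q_{\mathfrak{p}}[A\setminus\mathfrak{p}]}(a+\mathfrak{p}) \in Q\,\}, $$
i.e. the pullback of $Q$ along the canonical composite $A \twoheadrightarrow A/\mathfrak{p} \rightarrowtail k_{\mathfrak{p}}(A)$. Since the composite is a $\ci$-homomorphism and $Q$ is a $\ci$-ordering of the $\ci$-field $k_{\mathfrak{p}}(A)$, axioms (O1), (O2), (O3) for $\Psi(\mathfrak{p},Q)$ transfer directly. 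For (O4), the support is precisely the preimage of $Q\cap(-Q)=\{0\}$, which is the kernel $\mathfrak{p}$; since $\mathfrak{p}\in{\rm Spec}^{\infty}(A)$ by assumption, (O4) holds.

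Finally I would verify that the two constructions are mutually inverse. The composition $P\mapsto(\mathfrak{p}_P,Q_P)\mapsto\Psi(\mathfrak{p}_P,Q_P)$ returns $P$: unwinding, $a\in\Psi(\mathfrak{p}_P,Q_P)$ iff $\eta(a+\mathfrak{p}_P)\in Q_P$, which by the definition of $Q_P$ (take $b=1$) is equivalent to $a\in P$ modulo $\mathfrak{p}_P$, and since $\mathfrak{p}_P\subseteq P$ this is equivalent to $a\in P$. Conversely, starting from $(\mathfrak{p},Q)$, the support of $\Psi(\mathfrak{p},Q)$ is $\mathfrak{p}$ as observed above, and the induced ordering of $k_{\mathfrak{p}}(A)$ recovers $Q$ because every element of the $\ci$-field is of the form $\eta(a+\mathfrak{p})\eta(b+\mathfrak{p})^{-1}$. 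The main subtlety I expect is the well-definedness/well-behavedness of $Q_P$ on the $\ci$-localization: one must check that clearing denominators really does transport the relation ``$ab\in P$'' consistently, which is where the fact that $\mathfrak{p}_P$ is $\ci$-radical prime (not merely prime) is essential, together with \textbf{Theorem \ref{TS}.(c)} ensuring that $A\setminus\mathfrak{p}_P$ is $\ci$-saturated so the localization does not create unexpected identifications.
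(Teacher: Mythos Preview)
The paper does not provide a proof of this Fact; it merely cites Marshall's Proposition~5.1.1 as the source. Your sketch is precisely the classical Marshall argument transported to the $\ci$-setting, so in that sense you are doing exactly what the paper intends the reader to do.

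One genuine subtlety you slide past: in the final paragraph you assert that ``every element of the $\ci$-field is of the form $\eta(a+\mathfrak{p})\eta(b+\mathfrak{p})^{-1}$''. This is automatic for ordinary rings of fractions but \emph{not} for $\ci$-localizations: for instance $\ci(\R)\{x^{-1}\}\cong\ci(\R\setminus\{0\})$ contains smooth functions that are not global smooth functions divided by a power of~$x$. Consequently the set $Q_P$ as literally written might fail axiom~(O3), and the forward map would not land in $\Sigma(A)$. The repair is already implicit in the surrounding material: by \textbf{Theorem~\ref{orderedfield-teo}} every $\ci$-field is Euclidean, so ${\rm Sper}^{\infty}(k_{\mathfrak{p}}(A))$ is a singleton and the second coordinate is forced. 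One should therefore either read $Q_P$ as the unique ordering of $k_{\mathfrak{p}}(A)$ (which your set of fractions generates), or simply observe that the bijection collapses to ${\rm Sper}^{\infty}(A)\to{\rm Spec}^{\infty}(A)$, $P\mapsto\mathfrak{p}_P$, with inverse $\mathfrak{p}\mapsto\Psi(\mathfrak{p},Q)$ for the unique $Q$ --- and for this neither direction requires that elements of $k_{\mathfrak{p}}(A)$ be fractions.
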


\begin{definition}Let $A$ be a $\mathcal{C}^{\infty}-$ring. Given a $\mathcal{C}^{\infty}-$ordering $P$ in $A$, the \index{$\mathcal{C}^{\infty}-$support}$\mathcal{C}^{\infty}-$\textbf{support} of $A$ is given by:

$${\rm supp}^{\infty}(P):= \mathfrak{p}_P = P \cap (-P)$$
\end{definition}

\begin{definition}\label{Santo}Let $A$ be a $\mathcal{C}^{\infty}-$ring. The \index{$\mathcal{C}^{\infty}-$real spectrum of  $A$}\textbf{$\mathcal{C}^{\infty}-$real spectrum of  $A$} is given by:

$${\rm Sper}^{\infty}\,(A)=\{ P \subseteq A | P \, \mbox{is an ordering of the elements of}\,\, A\}$$
together with the (spectral) topology generated by the sets:

$$H^{\infty}(a) = \{ P \in {\rm Sper}^{\infty}\,(A) | a \in P \setminus {\rm supp}^{\infty}\,(P)\}$$

\noindent for every $a \in A$. The topology generated by these sets will be called ``smooth Harrison topology'', and will be denoted by ${\rm Har}^{\infty}$.
\end{definition}

\begin{remark}
The suitable notion of prime spectrum of a $\ci$-ring $A$, ${\rm Spec}^\infty(A)$, appeared for the  first time in  \cite{rings2}: this is the main spatial notion to develop ``Smooth Algebraic Geometry". On the other hand,  in \cite{tese} was introduced the notion of smooth real spectrum of a   $\ci$-ring $A$, ${\rm Sper}^\infty(A)$: this seems to be  the suitable spatial notion for the development of ``Smooth \underline{ Real} Algebraic Geometry".

\end{remark}

\begin{fact}\label{avio}Given a $\mathcal{C}^{\infty}-$ring $A$, we have a  function given by:

$$\begin{array}{cccc}
    {\rm supp}^{\infty}: & ({\rm Sper}^{\infty}(A), {\rm Har}^{\infty}) & \rightarrow & ({\rm Spec}^{\infty}\,(A), {\rm Zar}^{\infty}) \\
     & P & \mapsto & P \cap (-P)
  \end{array}$$

\noindent which is spectral, and thus continuous, since given any $a \in A$, ${{\rm supp}^{\infty}}^{\dashv}[D^{\infty}(a)] = H^{\infty}(a)\cup H^{\infty}(-a)$.
\end{fact}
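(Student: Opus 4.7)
The plan is to verify the displayed preimage identity
$({\rm supp}^\infty)^{\dashv}[D^\infty(a)] = H^\infty(a) \cup H^\infty(-a)$
by a direct case analysis using the axioms (O1)--(O4), and then derive continuity and spectrality as automatic consequences. Well-definedness of ${\rm supp}^\infty$ on the codomain is immediate: axiom (O4) says precisely that $P \cap (-P)$ is an element of ${\rm Spec}^\infty(A)$.

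For the identity, fix $a \in A$ and $P \in {\rm Sper}^\infty(A)$; set $\mathfrak{p} = {\rm supp}^\infty(P) = P \cap (-P)$. By definition $\mathfrak{p} \in D^\infty(a)$ iff $a \notin P$ or $a \notin -P$. I would then exploit axiom (O3), $P \cup (-P) = A$, to split into three possible cases: (i) $a \in P$ and $a \notin -P$, in which case $a \in P \setminus \mathfrak{p}$ and $P \in H^\infty(a)$; (ii) $a \in -P$ and $a \notin P$, in which case $-a \in P$ and $-a \notin -P$ (for otherwise $a \in P$ using $-(-a) = a$), so $-a \in P \setminus \mathfrak{p}$ and $P \in H^\infty(-a)$; and (iii) $a \in P$ and $a \in -P$, where $a \in \mathfrak{p}$ and $\mathfrak{p} \notin D^\infty(a)$. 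The converse inclusion is equally direct: if $P \in H^\infty(a)$ then $a \notin -P$, so $a \notin \mathfrak{p}$; and symmetrically for $H^\infty(-a)$.

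Continuity is then automatic: the sets $D^\infty(a)$ form a basis (in fact a sub-basis suffices) for the smooth Zariski topology on ${\rm Spec}^\infty(A)$, and the identity exhibits each such preimage as a finite union of Harrison basic opens, hence open in ${\rm Har}^\infty$. For spectrality, I would recall that the basic opens $D^\infty(a)$ are quasi-compact in ${\rm Spec}^\infty(A)$ (as established in Section 1) and that the analogous basic opens $H^\infty(a)$ are quasi-compact in ${\rm Sper}^\infty(A)$ (cf.\ \cite{tese}); a finite union of quasi-compact subsets is again quasi-compact, so $({\rm supp}^\infty)^{\dashv}[D^\infty(a)]$ is quasi-compact, which is the remaining ingredient beyond continuity needed for the map to be spectral.

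I do not expect a serious obstacle here: the verification is essentially formal, once the $\ci$-ordering axioms are unpacked. The only mild subtlety is case (ii) above, where one needs to notice that $a \in -P$ and $a \notin P$ forces $-a \in P \setminus (-P)$ rather than just $-a \in P$; this is where (O3) is used in a nontrivial direction. Aside from that, the argument is a clean reduction of the spectrality of ${\rm supp}^\infty$ to the quasi-compactness of the basic Harrison opens, a property of ${\rm Sper}^\infty(A)$ that is assumed from the prior development.
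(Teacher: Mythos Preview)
Your proposal is correct and follows exactly the approach the paper indicates: the paper states this as a \textbf{Fact} with no separate proof, embedding the entire justification in the single clause ``since given any $a \in A$, ${{\rm supp}^{\infty}}^{\dashv}[D^{\infty}(a)] = H^{\infty}(a)\cup H^{\infty}(-a)$''. Your detailed case analysis using (O3)--(O4) simply unpacks that identity, and your derivation of continuity and spectrality from it is the intended one.
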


\underline{Unlike what happens in ordinary} \underline{Commutative} \underline{Algebra}, we have the following (stronger) result in "Smooth Commutative Algebra", as a consequence of the fact  that every $\ci$-field is ($\ci$-)real closed\footnote{In fact, to obtain this result it is enough to know that every $\ci$-field is {\em Euclidean}.} and some separation theorems (see \cite{separation} or \textbf{Theorem 45} of \cite{BM2}):

\begin{theorem} For each  $\mathcal{C}^{\infty}-$ring $A$, the mapping

$$\begin{array}{cccc}
    {\rm supp}^{\infty}: & ({\rm Sper}^{\infty}(A), {\rm Har}^{\infty}) & \rightarrow & ({\rm Spec}^{\infty}\,(A), {\rm Zar}^{\infty})
  \end{array}$$

is a (spectral) {\underline{bijection}}.

\end{theorem}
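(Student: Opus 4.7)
The plan is to leverage the parametrization recalled in the Fact immediately above: $P \mapsto (\mathfrak{p}_P, Q_P)$ gives a bijection ${\rm Sper}^\infty(A) \xrightarrow{\sim} \Sigma(A) = \{(\mathfrak{p}, Q) : \mathfrak{p} \in {\rm Spec}^\infty(A),\ Q \in {\rm Sper}^\infty(k_\mathfrak{p}(A))\}$, under which ${\rm supp}^\infty$ becomes the first-coordinate projection $(\mathfrak{p}, Q) \mapsto \mathfrak{p}$. Since Fact \ref{avio} already gives spectrality of ${\rm supp}^\infty$, the whole theorem reduces to the claim that, for every $\mathfrak{p} \in {\rm Spec}^\infty(A)$, the fiber ${\rm Sper}^\infty(k_\mathfrak{p}(A))$ is a singleton.

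First, I would note that, by Proposition \ref{field-prop}.(4), $k_\mathfrak{p}(A)$ is a $\ci$-field, so Theorem \ref{orderedfield-teo} endows it with a canonical compatible total order $\prec$; by Definition \ref{prec-def}, the elements with $0 \prec a$ are exactly the nonzero squares of units, so $k_\mathfrak{p}(A)$ is Euclidean and therefore admits a unique linear order compatible with $+$ and $\cdot$. Next I would verify that on a $\ci$-field $F$ a $\ci$-ordering coincides with the positive cone of a compatible total order: since the only proper prime ideal of a field is $(0)$ and $F$ is $\ci$-reduced by Proposition \ref{field-prop}.(1) (so $(0) \in {\rm Spec}^\infty(F)$), condition (O4) forces $P \cap (-P) = (0)$, and then (O1)--(O3) together with $P \cup (-P) = F$ and $P \cap (-P) = (0)$ are exactly the positive-cone axioms of a compatible total order on $F$. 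Combining these two steps yields $|{\rm Sper}^\infty(k_\mathfrak{p}(A))| = 1$, from which injectivity of ${\rm supp}^\infty$ follows (two $\ci$-orderings with the same support correspond to the same pair $(\mathfrak{p}, Q)$) and surjectivity follows (given $\mathfrak{p}$, take the unique $Q \in {\rm Sper}^\infty(k_\mathfrak{p}(A))$ and lift).

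The main obstacle is really packed inside the cited Fact: one needs the separation theorems of Theorem \ref{TS} together with Proposition \ref{field-prop} both to produce $k_\mathfrak{p}(A)$ for each $\mathfrak{p} \in {\rm Spec}^\infty(A)$ and to establish the bijection ${\rm Sper}^\infty(A) \cong \Sigma(A)$ (i.e., that every $\ci$-ordering of $A$ with support $\mathfrak{p}$ arises by pulling back a $\ci$-ordering of $k_\mathfrak{p}(A)$). Once this is in hand and the Euclidean-uniqueness reduction above is carried out, ${\rm supp}^\infty$ being a bijection drops out. It is worth emphasizing that the decisive input is the (real-)closedness of $\ci$-fields; for a generic commutative ring several orderings may sit over the same real-spectrum prime, so the ``spectral bijection'' here is genuinely a smooth phenomenon, not a shadow of ordinary real algebraic geometry.
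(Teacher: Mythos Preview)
Your proposal is correct and aligns with the paper's own (terse) justification: the paper does not spell out a proof, but the sentence preceding the theorem states that it is ``a consequence of the fact that every $\ci$-field is ($\ci$-)real closed\footnote{In fact, to obtain this result it is enough to know that every $\ci$-field is \emph{Euclidean}.} and some separation theorems,'' which is precisely the reduction you carry out via the cited Fact on $\Sigma(A)$ together with the uniqueness of the ordering on each residue $\ci$-field $k_{\mathfrak{p}}(A)$. Your identification of $\ci$-orderings on a $\ci$-field with positive cones of compatible total orders (using that $(0)$ is the only prime, and it is $\ci$-radical by Proposition~\ref{field-prop}.(1)) is the one nontrivial bookkeeping step, and you handle it correctly.
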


\section{Concluding remarks and future works}

\begin{remark}

 It is natural to ask if the class of $\mathcal{C}^{\infty}$-fields is model-complete in the language of $\mathcal{C}^{\infty}$-rings or even admits elimination of quantifiers (possibly in the language expanded by a unary predicate for the positive cone of an ordering). If the former holds, then the relation  ${\cal R}$ between pairs of morphism with the same source and target $\mathcal{C}^{\infty}$-fields, that encodes ${\rm Sper}^\infty$, is already a transitive relation (as it occurs in the algebraic case).
 
 \end{remark}

\begin{remark}
 If the class of $\ci$-fields admits quantifier elimination (over a reasonable language), then  it is possible to adapt the definition and results provided in \cite{Robson} on ``Model-theoretic Spectra'' and describe ``logically'' the spectral topological spaces ${\rm Spec}^\infty(A)$ and/or ${\rm Sper}^\infty(A)$ as certain equivalence classes of homomorphisms from $A$ into models of a  ``nice'' theory $T$. Moreover, since the techniques in this work provide  structural sheaves of ``definable functions'', we could compare them with other ones previously defined and determine other new natural model-theoretic spectra in $\mathcal{C}^{\infty}$-structures.

\end{remark}

\begin{remark}

Another evidence that a systematic  model-theoretic analysis of $\ci$-rings, (not only under  real algebra perspective but also under differential algebra perspective), should be interesting and deserves a further attention is indicated in \cite{FM}. In that work, were given the first steps towards a  model-theoretic connection between three kinds of structures: o-minimal structures, Hardy fields and smooth rings. This triple is related to  another one -- Hardy fields, surreal numbers and transseries -- studied in \cite{vandendries-matthias-}: 
these are linked by the notion of H-field  which provides a common framework for these structures. They present a  model-theoretic analysis of the category of
H-fields, e.g. the theory of H-closed fields is model complete,  and relate these results with the latter tripod, that according the authors  M. Aschenbrenner, L. v. Dries and J. v. Hoeven (\cite{vandendries-matthias-}):
 {{\em ``...are three ways
to enrich the real continuum by infinitesimal and infinite quantities. Each of these
comes with naturally interacting notions of ordering and derivative''.}}



\end{remark}

\end{document}